\theoremstyle{thmstyleone}%
\newtheorem{theorem}{Theorem}[section]% meant for sectionwise numbers
\newtheorem{proposition}[theorem]{Proposition}% 
\newtheorem{lemma}[theorem]{Lemma}% 
\newtheorem{corollary}[theorem]{Corollary}% 
\theoremstyle{thmstyletwo}%
\newtheorem{example}[theorem]{Example}%
\newtheorem{remark}{Remark}%
\newtheorem{problem}{Problem}
\theoremstyle{thmstylethree}%
\newtheorem{definition}[theorem]{Definition}%
\tikzset{%
 % elements
 shaded/.style={draw, shape=circle, fill=black!35, inner sep=1.4pt},
 unshaded/.style={draw, shape=circle, fill=white, inner sep=1.4pt},
 quasi/.style={draw, shape=rectangle, rounded corners=3pt, fill=white, inner sep=2.5pt, minimum height=14.5pt},
 % blobs
 blob/.style={draw, shape=rectangle, rounded corners=12pt, thin, densely dotted},
 % arrows
 arrow/.style={->, thin, >=latex, shorten >=2.5pt, shorten <=2.5pt},
 % lines
 order/.style={thin},
 curvy/.style={thin, looseness=1.2, bend angle=70},
 fatcurvy/.style={thin, looseness=1.7, bend angle=75},
 % labels
 label/.style={shape=rectangle, inner sep=6pt},
 auto}
\newcommand{\koflbf}{\mathbf{K}[\mathbf{L}]}
\newcommand{\Lalg}{\mathbf{L}}
\newcommand{\UpE}{\mathsf{Up}(\mathbf{E})}
\newcommand{\K}{\mathbf{K}}
\font\bmi=cmmi8 scaled 1440
\newcommand{\powerset}{\raise.6ex\hbox{\bmi\char'175 }}
\renewcommand{\smile}{\smallsmile}
\begin{document}

\title[Representability for DqRAs]{Representability for distributive quasi relation algebras via nested sums}

%%=============================================================%%
%% GivenName	-> \fnm{Joergen W.}
%% Particle	-> \spfx{van der} -> surname prefix
%% FamilyName	-> \sur{Ploeg}
%% Suffix	-> \sfx{IV}
%% \author*[1,2]{\fnm{Joergen W.} \spfx{van der} \sur{Ploeg} 
%%  \sfx{IV}}\email{iauthor@gmail.com}
%%=============================================================%%

\author*[1,2]{\fnm{Andrew} \sur{Craig}}\email{acraig@uj.ac.za}

\author[1]{\fnm{Wilmari} \sur{Morton}}\email{wmorton@uj.ac.za}
%\equalcont{These authors contributed equally to this work.}

\author[1]{\fnm{Claudette} \sur{Robinson}}\email{claudetter@uj.ac.za}
%\equalcont{These authors contributed equally to this work.}

\affil*[1]{\orgdiv{Department of Mathematics and Applied Mathematics}, \orgname{University of Johannesburg}, \orgaddress{\street{PO Box 524}, \city{Auckland Park}, \postcode{2006}, \country{South Africa}}}

\affil[2]{\orgdiv{National Institute for Theoretical and Computational Sciences (NITheCS)}, \orgaddress{\city{Johannesburg}, \country{South Africa}}}

%%==================================%%
%% Sample for unstructured abstract %%
%%==================================%%
\abstract{We extend the work of Galatos (2004) on nested sums, originally called generalised ordinal sums, of residuated lattices. We show that the nested sum of an odd quasi relation algebra (qRA) satisfying certain conditions and an arbitrary qRA is again a qRA. In a recent paper by 
%ac2611
Craig and Robinson (2025)
%Craig and Robinson (2024), 
the notion of representability for distributive quasi relation algebras (DqRAs) was developed. For certain
pairs of representable DqRAs, we prove that their nested sum is again representable. An important consequence of this result is that finite
Sugihara chains are finitely representable.}

\keywords{quasi relation algebra, representability, Sugihara chain, nested sum, generalised ordinal sum}

%%\pacs[JEL Classification]{D8, H51}

\pacs[MSC Classification]{06F05, 03B47, 03G10}

\maketitle

Quasi relation algebras (qRAs) were first described by Galatos and Jipsen~\cite{GJ13}. On the one hand they can be viewed as a generalisation of relation algebras, or as involutive FL-algebras with an additional negation-like unary operation. Unlike relation algebras, the variety of qRAs has a decidable equational theory.

In this paper, we first extend the so-called {\em nested sum}, originally called the \emph{generalised ordinal sum} by Galatos~\cite{G04}, from residuated lattices to quasi relation algebras.  
%That construction has appeared in tandem with ``ordinal sums of posets" and has therefore been referred to as the {\em nested sum} of residuated lattices more recently in the literature (see eg.~\cite{Sant24}) to avoid ambiguity.
Since the 
construction for residuated lattices has appeared in tandem with ``ordinal sums of posets", recent papers on the topic (see eg.~\cite{Sant24}) have chosen to use the term nested sum. 
%We prefer to use the newly preferred term of {\em nested sum}.  
When thinking about the order structure of the lattices, the construction can be considered in the following way: a residuated lattice $\Lalg$ is inserted into a residuated lattice $\K$ where it replaces the unit of the monoid operation. The requirements placed on $\Lalg$ and $\K$ are discussed in Section~\ref{sec:KL-construction}.
We essentially extend the 
conditions for residuated lattices to the additional unary operations in the signature of qRAs. 
A consequence of the conditions on $\mathbf{K}$ is that its FL-algebra reduct will be odd (i.e. $0=1$). 
%\marginpar{\small AC to add a comment about K being odd.}
%For our extension to quasi relation algebras, we require, in addition to other conditions, that the algebra $\K$ is an odd FL-algebra (i.e. $0=1$).\marginpar{\small CR: Odness is a consequence of the fact that 1 is totally irreducible.} 

Having shown that the $\mathbf{K[L]}$ construction gives us a method for constructing new qRAs from existing ones, we then turn to the question of representability. 

Abstract relation algebras, first described by Tarski in 1941~\cite{Tar41}, were designed to provide an abstract algebraic setting for studying binary relations. J\'{o}nsson and Tarski~\cite{JT48} asked whether all relation algebras were isomorphic to algebras of binary relations (with the monoid operation given by relational composition). 
%ac better wording I think
This question was answered in the negative by Lyndon~\cite{Lyn50}. 
%Lyndon~\cite{Lyn50} showed that not all relation algebras are representable as algebras of binary relations. 
The task of representing relation algebras as algebras of binary relations has been an intriguing area of research ever since. 

In~\cite{CR-RDqRA}, two of the current authors gave a definition of \emph{representability}  for distributive quasi relation algebras 
(see Section~\ref{sec:RDqRA}). Partially ordered sets equipped with an equivalence relation (and satisfying certain symmetry requirements) 
can be used to build distributive quasi relation algebras (DqRAs) as algebras of binary relations. This method was further exploited to give relational representations of Sugihara monoids~\cite{CR-Sugihara}. 

In Section~\ref{sec:KL-rep} we show that if $\mathbf{K}=\mathbf{S}_3$, and if $\mathbf{L}$ is representable, then their nested sum $\mathbf{K[L]}$ will again be representable (Theorem~\ref{Thm:K[L]represenatble}). 
This is extended in Corollary~\ref{cor:Sn[L]-rep} to the case of $\mathbf{K}$ being any finite odd Sugihara chain.
These results are similar
%ac 
in spirit
to  the result  for integral relation algebras where the relation algebra obtained from $\mathbf{A}$ and $\mathbf{B}$
by Comer's construction \cite{Com83}
is representable if and only if  $\mathbf{A}$ and $\mathbf{B}$ are representable.  

%In Section~\ref{sec:KL-rep} we show that if $\mathbf{K}$ is a finite odd Sugihara chain, and if $\mathbf{L}$ is representable, then their generalised ordinal sum $\mathbf{K[L]}$ will again be representable (Corollary~\ref{cor:Sn[L]-rep}). This is similar to  the situation for integral relation algebras where the relation algebra obtained from $\mathbf{A}$ and $\mathbf{B}$ by Comer's construction \cite{Com83} is representable if and only if  $\mathbf{A}$ and $\mathbf{B}$ are representable.  

%In particular, our representability result can be used when $\mathbf{K}=\mathbf{S_3}$, the three-element Sugihara chain, and hence applies to the situation where new bounds are added to the algebra $\mathbf{L}$. That is, if $\mathbf{L}$ is representable, then $\bot \oplus \mathbf{L} \oplus \top$ is representable. 

A further application of 
Theorem~\ref{Thm:K[L]represenatble} is given in Section~\ref{sec:applications}, where we show that all finite Sugihara chains can be represented using a set of binary relations on a \emph{finite} poset (Theorem~\ref{thm:fin-rep-Sn}).
This improves the results in \cite{Mad2010} and~\cite{CR-Sugihara} where infinite posets (using copies of $\mathbb{Q}$, the rational numbers) were required to obtain a relational representation of $\mathbf{S}_n$ for $n \geqslant 4$.

%A further application of Corollary~\ref{cor:Sn[L]-rep} is given in Section~\ref{sec:applications}, where we show that all finite Sugihara chains can be represented using a set of binary relations on a \emph{finite} poset. This improves the results in~\cite{CR-Sugihara} where infinite posets (using copies of $\mathbb{Q}$, the rational numbers) were required to obtain a relational representation of $\mathbf{S}_n$ for $n \geqslant 4$. 

%wm %ac %cr
In Problem~\ref{prob:K=Snalways?} we pose an intriguing question about the structures of possible algebras $\mathbf{K}$ for which we can guarantee that $\koflbf$ will be representable. Specifically we ask  whether $\K$ must be a finite Sugihara chain.

\section{Preliminaries}\label{sec:Prelim}
In this section we recall the basic definitions of the algebras
used in this paper.  The reader is referred to~\cite{GJKO} 
for more information. 
%ac removed 
%on these algebras.

\subsection{Quasi relation algebras}\label{subsec:qRAs}

A quasi relation algebra~\cite{GJ13} is an expansion of a residuated lattice.  Recall
that a {\em residuated lattice} (RL) is an algebra 
$\mathbf{A}=\langle A,\wedge,\vee, \cdot,\backslash,/,1\rangle$ such that 
$\langle A,\cdot,1\rangle$ is a monoid, $\langle A,\wedge,\vee\rangle$
is
a lattice and the monoid operation $\cdot$ is residuated with
residuals $\backslash$ and $/$, i.e., for all $a,b,c\in A$,
\[a\cdot b\leqslant c\quad\iff\quad 
a\leqslant c/b\quad\iff\quad
b\leqslant a\backslash c.\]
A \emph{distributive} residuated lattice $\mathbf{A}$  is a residuated lattice whose underlying lattice is distributive.
%ac2 not needed
%, i.e.,  $a\wedge(b\vee c)=(a\wedge b)\vee (a\wedge c)$ for all $a, b, c \in A$. 
Recall also that a residuated lattice $\mathbf A$  is \emph{commutative} if $a \cdot b = b \cdot a$ for all $a, b \in A$. An element $a \in A$ is said to be \emph{idempotent} if $a\cdot a = a^2 = a$. 
%ac2611 
If every
element of $\mathbf{A}$ is idempotent, then
$\mathbf{A}$ is said to be {\em idempotent}.

If $a\leqslant 1$ or $1\leqslant a$ for all $a\in A$, 
then $\mathbf{A}$ is called {\em conic}.
We note that when the term conic first appeared 
in~\cite[Definition 2.3]{HR09} it was applied to 
\emph{commutative} residuated pomonoids, 
but since then the term has been used more generally.

A residuated lattice expanded with a constant $0$, that is, 
$\mathbf{A}=\langle A,\wedge,\vee,\cdot,\backslash,/,1,0\rangle$
is called a {\em Full Lambek {\upshape (}FL-{\upshape )}algebra} (cf.~\cite[Chapter~2.2]{GJKO}).  
No additional properties are assumed about the constant $0$. An FL-algebra satisfying $1=0$ is called {\em odd}.

%ac addition 
On an FL-algebra, two 
%Two 
unary operations ${\sim}:A\to A$ and $-:A\to A$, called {\em linear negations},
are defined in terms of the residuals and $0$ as follows.  For $a\in A$,
\[{\sim} a=a\backslash 0 \quad \text{and}\quad {-}a=0/a.\]
%\marginpar{\small Check if Lemma 2.2 is valid for FL-algebras (or just InFL-algebras)}
It follows that ${\sim}1 = 1\backslash 0 = 0= 0/1=-1$.
By residuation, 
${\sim} (a\vee b)={\sim}a \wedge {\sim}b$ and ${-}(a\vee b)=
{-a}\wedge{-}b$ for all $a,b\in A$.

%and therefore dual
%lattice isomorphisms. Moreover, by residuation, 
%${\sim} (a\vee b)={\sim}a \wedge {\sim}b$ and ${-}(a\vee b)=
%{-a}\wedge{-}b$ for all $a,b\in A$. 
%Since the residuals
%can be recovered from $\sim$ and ${-}$, an alternate
%signature for an FL-algebra $\mathbf{A}$ can be 
%used~\cite[Lemma 2.2]{GJ13}, namely
%$\mathbf{A}=\langle A, \wedge,\vee,\cdot,\sim,{-},1,0\rangle$.
%An FL-algebra satisfying $1=0$ is called {\em odd}.

We say that an FL-algebra $\mathbf{A}$ is \emph{cyclic} if $-a = {\sim} a$ for all $a \in A$. It is easy to see that if an FL-algebra $\mathbf{A}$ is commutative, then it is cyclic. 

If an FL-algbra $\mathbf{A}$ satisfies the condition 
\[\textsf{(In)}:\quad   {\sim}{-}a=a={-}{\sim}a,\text{ for all }a\in A,\]
then it is called an {\em involutive Full Lambek {\upshape(}InFL-{\upshape)}algebra}.
In an InFL-algebra $\mathbf{A}$ we have that %${\sim} 1={-}1$
%and 
$a\leqslant b\iff a\,\cdot({\sim} b)\leqslant -1\iff (-b)\cdot a\leqslant-1$,
for all $a,b\in A$. Moreover, since both $-$ and $\sim$ are order-reversing, 
%ac addition 
we get that 
$-$ and $\sim$ are dual lattice isomorphisms in an InFL-algebra. 

The dual of $\cdot$ is the binary operation $+:A\times A\to A$
defined by $a+b={\sim}(-a\cdot -b)$ for
all $a,b\in A$. In an InFL-algebra we have that 
$a+b=-({\sim} b\cdot {\sim} a)$ holds. 

In~\cite[Lemma 2.2]{GJ13} Galatos and Jipsen show that 
%Since the residuals
%can be recovered from $\sim$ and ${-}$, an alternate
%signature for an 
an InFL-algebra is term-equivalent to an algebra $\mathbf{A}=\langle A, \wedge,\vee,\cdot,{\sim},{-},1\rangle$ such that $\langle A, \wedge,\vee\rangle$ is a lattice, $\langle A, \cdot, 1\rangle$ is a monoid, and for all $a, b, c \in A$, we have
\[a\cdot b\leqslant c\quad\iff\quad 
a\leqslant -\left(b\cdot {\sim}c\right)\quad\iff\quad
b\leqslant {\sim}\left(-c\cdot a\right).\]
%be 
%used~\cite[Lemma 2.2]{GJ13}, namely
%$\mathbf{A}=\langle A, \wedge,\vee,\cdot,\sim,{-},1,0\rangle$.
We will usually write $\mathbf A = \langle A,\wedge, \vee, \cdot, {\sim}, {-}, 1, 0\rangle$ for an InFL-algebra, 
where $0 = -1 ={\sim}1$. The residuals can be expressed in terms of $\cdot$ and the linear negations as follows:
\begin{equation}
    c/b = -\left(b\cdot {\sim}c\right)
    \quad\text{and}\quad
    a\backslash c = {\sim}\left(-c\cdot a\right).\label{Eqn:ResidualsFromNegations}
\end{equation}

An InFL-algebra $\mathbf{A}$ can be expanded with an additional
unary operation ${\neg}:A\to A$ %called self involution, 
to form an {\em InFL$'$-algebra} 
$\mathbf{A}=\langle A, \wedge,\vee,\cdot,\sim,{-},{\neg},1,0\rangle$
such that $\neg\neg a=a$ for all $a\in A$. 
We note that the involution is usually denoted by a prime, i.e., $'$, but 
for readability when using subscripts we will use $\neg$.
%An InFL$'$-alebra $\mathbf{A}$ is called {\em cyclic, complemented}
%or {\em distributive} if is satisfies 
%\begin{multline*}
%	\textsf{(C)}:\;\sim a ={-}a,\;\;
%\textsf{(Cp)}:\; \neg a\wedge a=\bot\text{ and }\neg a\vee a=\top,\text{ or}\\
%\textsf{(D)}:\; a\wedge(b\vee c)=(a\wedge b)\vee (a\wedge c),
%\end{multline*}
%respectively, for all $a,b,c\in A$.

If an InFL$'$-algebra $\mathbf{A}$
additionally satisfies the De Morgan law
\[\textsf{(Dm)}:\quad\neg (a\vee b)=\neg a\wedge \neg b,\text{ for all }a,b,\in A,\]
then $\mathbf{A}$ is called a {\em DmInFL$'$-algebra}.  
A {\em quasi relation algebra} (qRA) is a DmInFL$'$-algebra 
$\mathbf{A}=\langle A,\wedge,\vee,\cdot,{\sim},{-},{\neg},1,0\rangle$
that satisfies, for all $a,b\in A$,
\[\textsf{(Di)}:\quad \neg({\sim} a)=-(\neg a)
\quad\text{and}\quad
\textsf{(Dp)}:\quad\neg (a\cdot b)=\neg a+\neg b.\]
The abbreviations \textsf{(Di)} and \textsf{(Dp)} stand for De Morgan involution and
De Morgan product, respectively.

We note that if $\mathbf{A}$ is a qRA, then it can be shown that $\neg 1 = - 1= {\sim}1 = 0$. 
An equivalent definition of a qRA is an FL$'$-algebra (i.e. an FL-algebra with an involutive unary operation $'$) that satisfies \textsf{(Dm)}, \textsf{(Di)} and \textsf{(Dp)}. Notice that if a DmFL$'$-algebra satisfies \textsf{(In)} and \textsf{(Dp)}, then it can be shown that it also satisfies \textsf{(Di)} and therefore is a qRA (cf.~\cite[Lemma 1]{CJR24}).

Finally, a {\em distributive quasi relation
algebra} (DqRA) is a quasi relation algebra $\mathbf{A}=\langle A,\wedge,\vee,\cdot,{\sim},{-},{\neg},1,0\rangle$
such that the underlying lattice $\langle A,\wedge,\vee\rangle$
is distributive. 

%ac2611
In Figure~\ref{fig:qRAs-examples} we give examples of small cyclic qRAs. Black nodes denote idempotent elements. The first diamond, $\mathbf{L}_2$, in Figure~\ref{fig:qRAs-examples} is term equivalent to the four element Boolean algebra (where the monoid
operation is given by the meet). The second algebra, $\mathbf{K}_2$ is an example of an odd qRA (hence $-1={\sim}1=\neg 1=0$).  Note that the lattice reduct need not be distributive
as can be seen from the third algebra in Figure~\ref{fig:qRAs-examples}.  In~\cite[Figure~1]{CR-RDqRA} two examples of non-cyclic qRAs are given. All of these algebras were found
with the help of Prover9/Mace4~\cite{P9M4}. %If a nonobvious
%product is not indicated, then it is $\bot$.

\begin{comment}
In Figure~\ref{fig:qRAs-examples} we give examples of small cyclic qRAs. Black nodes denote idempotent elements.
The second diamond, $\mathbf{K}_2$, in Figure~\ref{fig:qRAs-examples} is an example of an odd qRA (hence $-1={\sim}1=\neg 1=0$).  In~\cite[Figure~1]{CR-RDqRA} two examples of non-cyclic qRAs are given. All of these algebras were found
with the help of Prover9/Mace4~\cite{P9M4}. %If a nonobvious
%product is not indicated, then it is $\bot$.
\end{comment}

%\begin{example} Diamond example that will be used later. Non-distributive example. Seven element non-cyclic example. \end{example}

\begin{figure}[h!]

\centering
\begin{tikzpicture}%[scale=1.5]

\begin{scope}%[xshift=10cm]
% Elements
\node[draw,circle,inner sep=1.5pt,fill] (0) at (0,0) {};
\node[draw,circle,inner sep=1.5pt,fill] (a) at (-1,1) {};
\node[draw,circle,inner sep=1.5pt,fill] (b) at (1,1) {};
\node[draw,circle,inner sep=1.5pt,fill] (1) at (0,2) {};
% Order
\draw[order] (0)--(a)--(1);
\draw[order] (0)--(b)--(1);
% Labels
\node[label,anchor=south] at (1) {$1$};
\node[label,anchor=east,xshift=1pt] at (a) {$a$};
\node[label,anchor=west,xshift=-1pt] at (b) {$b$};
\node[label,anchor=north] at (0) {$0$};
%\node[label,anchor=north,yshift=-12pt] at (0) {(iii)};
\node[label,anchor=south,xshift=2pt,yshift=-94pt] at (1) {$\mathbf{L}_1$};
\end{scope}

%Operation table of RA diamond
\begin{scope}[yshift=-4cm]
\node (somenode) at (0,2){
\begin{tabular}{|c||c|c|c|c|c|}
\hline $\cdot$ & $a$  & $b$ & $\neg $ & $\sim$ & $-$ \\\hline
\hline $a$ & $a$ & $0$ & $b$ & $b$ & $b$ \\
\hline $b$ & $0$ & $b$ & $a$ & $a$ & $a$ \\
\hline
\end{tabular}}; 
\end{scope}

%odd diamond with a^2 = \bot
\begin{scope}[xshift=4cm]
% Elements
\node[draw,circle,inner sep=1.5pt,fill] (0) at (0,0) {};
\node[draw,circle,inner sep=1.5pt] (a) at (-1,1) {};
\node[draw,circle,inner sep=1.5pt,fill] (b) at (1,1) {};
\node[draw,circle,inner sep=1.5pt,fill] (1) at (0,2) {};
% Order
\draw[order] (0)--(a)--(1);
\draw[order] (0)--(b)--(1);
% Labels
\node[label,anchor=south] at (1) {$\top$};
\node[label,anchor=east,xshift=1pt] at (a) {$a$};
\node[label,anchor=west,xshift=-1pt] at (b) {$1=0$};
\node[label,anchor=north] at (0) {$\bot$};
\node[label,anchor=south,xshift=2pt,yshift=-94pt] at (1) {$\mathbf{K}_2$};
\end{scope}

%Operation table of odd diamond
\begin{scope}[xshift=4cm,yshift=-4cm]
\node (somenode) at (0,2){
\begin{tabular}{|c||c|c|}
\hline $\cdot$ & $a$  & $\top$  \\\hline
\hline $a$ & $\bot$ & $a$  \\
\hline $\top$ & $a$ & $\top$  \\
\hline
\end{tabular}}; 
\end{scope}

%acdiag: made N5 smaller
%non-distributive example
\begin{scope}[xshift=7cm]
%Elements
\node[draw,circle,inner sep=1.5pt,fill] (c0) at (1,0) {};
\node[draw,circle,inner sep=1.5pt,fill] (c1) at (1,3) {};
\node[draw,circle,inner sep=1.5pt,fill] (a0) at (2,1) {};
\node[draw,circle,inner sep=1.5pt,fill] (a1) at (2,2) {};
\node[draw,circle,inner sep=1.5pt] (c2) at (0,1.5) {};
% Order
\draw[order] (c0)--(c2);
\draw[order] (c0)--(a0);
\draw[order] (c2)--(c1);
\draw[order] (a0)--(a1);
\draw[order] (a1)--(c1);
% Labels
\node[label,anchor=east,xshift=1pt] at (c0) {$\bot$};
\node[label,anchor=east,xshift=1pt] at (c1) {$\top$};
\node[label,anchor=east,xshift=1pt] at (c2) {$a$};
\node[label,anchor=west,xshift=-1pt] at (a0) {$0$};
\node[label,anchor=west,xshift=-1pt] at (a1) {$1$};
%\node[label,anchor=north,yshift=-4pt] at (c0) {(iii)};
\end{scope}

%Operation table of non-distributive qRA
\begin{scope}[yshift=-4cm, xshift=8cm]
\node (somenode) at (0,2){
\begin{tabular}{|c||c|c|c|c|c|}
\hline $\cdot$ & $0$  & $a$ & $\neg $ & $\sim$ & $-$ \\\hline
\hline $0$ & $0$ & $a$ & $1$ & $1$ & $1$ \\
\hline $a$ & $a$ & $\bot$ & $a$ & $a$ & $a$ \\
%\hline $\top$ & $\top$ & $\top$ & $\top$ & $\bot$ & $\bot$ & $\bot$ \\
\hline
\end{tabular}}; 
\end{scope}

%non-cyclic example
%\begin{scope}[yshift=-8cm]
% Elements
%\node[draw,circle,inner sep=1.5pt,fill] (bot) at (0,0) {};
%\node[unshaded] (d) at (-1,1) {};
%\node[unshaded] (e) at (1,1) {};
%\node[unshaded] (c) at (0,2) {};
%\node[draw,circle,inner sep=1.5pt,fill] (a) at (-1,3) {};
%\node[draw,circle,inner sep=1.5pt,fill] (b) at (1,3) {};
%\node[draw,circle,inner sep=1.5pt,fill] (top) at (0,4) {};
% Order
%\draw[order] (bot)--(d)--(c)--(a)--(top);
%\draw[order] (bot)--(e)--(c)--(b)--(top);
% Labels
%\node[label,anchor=east,xshift=1pt] at (a) {$\neg a$};
%\node[label,anchor=east,xshift=-1pt] at (c) {$c$};
%\node[label,anchor=east,xshift=1pt] at (d) {$a$};
%\node[label,anchor=west,xshift=-1pt] at (b) {$\neg b$};
%\node[label,anchor=west,xshift=-1pt] at (e) {$b$};
%ac-rev
%\node[] at (2.5,0.5) {$={--}a$};
%
%\node[label,anchor=north,xshift=-1pt] at (bot) {$0$};
%\node[label,anchor=south,xshift=-1pt] at (top) {$1$};
	
%\end{scope}

%\begin{scope}[xshift=5cm,yshift=-8cm]
%\node (somenode) at (1,2){
%\begin{tabular}{|c||c|c|c|c|c|c|c|  }
%\hline $\cdot$ &$a$  &$b$  &$c$ &$\neg a$ &$\neg b$ & $\sim$ & $-$ \\\hline
%\hline $a$ &  $0$ & $0$ & $0$ & $a$ & $0$ & $\neg b$ & $\neg a$ \\
%\hline $b$ & $0$  & $0$ & $0$ & $0$ & $b$ & $\neg a$ & $\neg b$ \\
%\hline $c$ & $0$  & $0$ & $0$ & $a$ & $b$ & $c$ & $c$  \\
%\hline $\neg a$ & $0$ & $b$ & $b$ & $\neg a$ & $b$ & $a$ & $b$ \\
%\hline $\neg b$ & $a$  & $0$ & $a$ & $a$ & $\neg b$ & $b$ & $a$ \\
%\hline
%\end{tabular}}; 
%\end{scope}
\end{tikzpicture}
\caption{Examples of small qRAs.}
\label{fig:qRAs-examples}
\end{figure}

An important class of examples of (distributive) quasi relation algebras is covered in more detail in the next subsection. 

\subsection{Sugihara chains}\label{sec:Sugihara} 

In Section~\ref{sec:KL-construction} we will describe a construction,
called the nested sum, that 
`inserts' one algebra into another.
In Section~\ref{sec:KL-rep} we show that if the algebras
satisfy certain conditions and are representable, then 
their nested sum is representable too.
Specifically, we will show that this is the case
if the algebra into which the other is `inserted' 
is a finite odd Sugihara chain.  We therefore recall
the basic definitions related to finite odd Sugihara chains.

%ac2611 
\begin{comment}
Let 
$\mathbf{S}=\langle S, \wedge,\vee, \cdot,\backslash,/,1\rangle$
be a residuated lattice such that the monoid 
operation $\cdot$ is commutative.
%(i.e., $a\cdot b=b\cdot a$).  In this case we have that \[a\leqslant c/ b\iff  a\cdot b\leqslant c\iff b\cdot a\leqslant c \iff a\leqslant b\backslash c\] for all $a,b,c\in S$.  That is, 
Then 
$\backslash$ and $/$ collapse
into a single operation denoted by $\to$,
as $c\backslash b=b/c$ for all $b,c\in S$,
and we use the signature 
$\mathbf{S}=\langle S, \wedge,\vee, \cdot,\to,1\rangle$
instead.  
\end{comment}
%ac2611
It is well known that a residuated lattice is commutative iff it satisfies the identity 
$x\backslash y\approx x/y$. Therefore,
keeping with convention, for a commutative residuated lattice $\mathbf{S}$ we use the signature $\mathbf{S}=\langle S, \wedge,\vee, \cdot,\to,1\rangle$
where ${\to}:= \backslash$.
%ac0112 missed edit 
%The algebra $\mathbf{S}$ is called idempotent if $a\cdot a= a$ for all $a\in S$.  
%ac2611 
We will also consider expansions of 
commutative residuated lattices by a single linear negation,
$\mathbf{S}=\langle S, \wedge,\vee, \cdot,\to,{\sim},1\rangle$, such that 
${\sim}{\sim} a=a$ and $a\to({\sim}b)=
b\to ({\sim}a)$ for all $a,b\in S$, 
to form an involutive commutative residuated lattice $\mathbf{S}$.
A {\em Sugihara monoid} 
$\mathbf{S}=\langle S, \wedge,\vee, \cdot,\to,{\sim},1\rangle$
is an involutive commutative distributive idempotent residuated lattice.
A Sugihara chain is a Sugihara monoid such that the
underlying lattice $\langle S,\wedge,\vee\rangle$ is linearly ordered.

In the sequel we will only consider finite Sugihara chains. 
These are algebras 
$\mathbf{S}_n=\langle S_n, \wedge, \vee, \cdot, \to, {\sim},1\rangle$ for $n \in \omega$
and $n \geqslant 2$. If $n=2k$ for $k>0$ then 
$S_n=\{a_{-k}, \dots, a_{-1}, a_1, \dots, a_k\}$, and if $n=2k+1$ for $k>0$ then 
$S_n=\{a_{-k}, \dots, a_{-1}, a_0, a_1, \dots a_k\}$. 
The meet and join are defined as expected: 
$a_i \wedge a_j = a_{\text{min}\{i,j\}}$ and $a_i \vee a_j = a_{\text{max}\{i,j\}}$. 
The unary operation ${\sim}$ is defined by ${\sim}a_j = a_{-j}$. 
The monoid operation is defined by
$$a_i \cdot a_j = \begin{cases}
a_i & \text{if }\, |j|<|i| \\ 
a_j & \text{if }\, |i|< |j| \\ 
a_{\text{min}\{i,j\}} & \text{if }\, |j|=|i|.
\end{cases}$$
The implication $a_i \to a_j$ is defined by $
{\sim}a_i \vee a_j$ if $i \leqslant j$ and $
{\sim}a_i \wedge a_j$ if $i >j$. 
If $n$ is odd then, the monoid identity is given by $1=a_0$,
and if $n$ is even, then the monoid identity is $1=a_1$. See Figure~\ref{fig:Sugihara-examples} for examples.

Given a finite Sugihara chain $\mathbf{S}_n$, we can set
$0={\sim} 1$ and $-a={\sim}a$ and $\neg a={\sim}a$, which gives
rise to a (distributive) quasi relation algebra. Expanding
the signature of the reduct $\langle S_n, \wedge, \vee, \cdot, {\sim}, 1\rangle$, we can view $\mathbf{S}_n$ as the DqRA 
$\langle S_n, \wedge, \vee, \cdot, {\sim}, {\sim}, {\sim},1, 0\rangle$.\\

%\begin{example} Write something about Sugihara chains. \end{example}

\begin{figure}[ht]
\centering
%ac reduced scale 
\begin{tikzpicture}[scale=0.9]
%\begin{tikzpicture}[scale=1]
\begin{scope}%[xshift=10cm]
% Elements
\node[draw,circle,inner sep=1.5pt,fill] (0) at (0,0) {};
\node[draw,circle,inner sep=1.5pt,fill] (1) at (0,1) {};
% Order
\draw[order] (0)--(1);
% Labels
\node[label,anchor=west] at (0) {$a_{-1}=0$};
\node[label,anchor=west] at (1) {$a_1=1$};
\node[label,anchor=south,xshift=-20pt,yshift=-10pt] at (1) {$\mathbf{S}_2$:};
\end{scope}

\begin{scope}[xshift=3.2cm]
% Elements
\node[draw,circle,inner sep=1.5pt,fill] (a) at (0,-1) {};
\node[draw,circle,inner sep=1.5pt,fill] (b) at (0,0) {};
\node[draw,circle,inner sep=1.5pt,fill] (c) at (0,1) {};
% Order
\draw[order] (a)--(b)--(c);
% Labels
\node[label,anchor=west] at (a) {$a_{-1}$};
\node[label,anchor=west] at (b) {$a_{0}=1=0$};
\node[label,anchor=west] at (c) {$a_{1}$};
\node[label,anchor=south,xshift=-20pt,,yshift=-10pt] at (c) {$\mathbf{S}_3$:};
\end{scope}

\begin{scope}[xshift=6.4cm]
% Elements
\node[draw,circle,inner sep=1.5pt,fill] (d) at (0,-2) {};
\node[draw,circle,inner sep=1.5pt,fill] (e) at (0,-1) {};
\node[draw,circle,inner sep=1.5pt,fill] (f) at (0,0) {};
\node[draw,circle,inner sep=1.5pt,fill] (g) at (0,1) {};
% Order
\draw[order] (d)--(e)--(f)--(g);
% Labels
\node[label,anchor=west] at (d) {$a_{-2}$};
\node[label,anchor=west] at (e) {$a_{-1}=0$};
\node[label,anchor=west] at (f) {$a_{1}=1$};
\node[label,anchor=west] at (g) {$a_{2}$};
\node[label,anchor=south,xshift=-20pt,yshift=-10pt] at (g) {$\mathbf{S}_4$:};
\end{scope}

\begin{scope}[xshift=9.6cm]
% Elements
\node[draw,circle,inner sep=1.5pt,fill] (h) at (0,-3) {};
\node[draw,circle,inner sep=1.5pt,fill] (i) at (0,-2) {};
\node[draw,circle,inner sep=1.5pt,fill] (j) at (0,-1) {};
\node[draw,circle,inner sep=1.5pt,fill] (k) at (0,0) {};
\node[draw,circle,inner sep=1.5pt,fill] (l) at (0,1) {};
% Order
\draw[order] (h)--(i)--(j)--(k)--(l);
% Labels
\node[label,anchor=west] at (h) {$a_{-2}$};
\node[label,anchor=west] at (i) {$a_{-1}$};
\node[label,anchor=west] at (j) {$a_{0}=1=0$};
\node[label,anchor=west] at (k) {$a_{1}$};
\node[label,anchor=west] at (l) {$a_{2}$};
\node[label,anchor=south,xshift=-20pt,yshift=-10pt] at (l) {$\mathbf{S}_5$:};
\end{scope}

\end{tikzpicture}
\caption{Finite Sugihara chains $\mathbf{S}_2$ to $\mathbf{S}_5$.}
\label{fig:Sugihara-examples}
\end{figure}

\section{Nested sums of qRAs}\label{sec:KL-construction}

%I am unsure of the introduction here.  The book cites the paper by Galatos and Raftery as the
%initial source for the construction, but I have read that paper and I don't see how anything 
%there relates to this construction.  My feeling is that we should just cite the book.

We first recall some definitions which are used in the original nested sum result~\cite[Lemma 6.1]{G04} (see also \cite[Lemma 9.6.2]{GJKO}). We note that our description makes a small correction to the original requirements for residuated lattices.

%ac new paragraph
An element $b$ in an algebra $\mathbf{A}$ is said to be 
\emph{irreducible} with respect to an $n$-ary operation (where $n$ is a positive integer)
$f$ of $\mathbf{A}$
if $f(b_1,\ldots,b_n)=b$ implies $b_i=b$ for some $i\in\{1,\ldots,n\}$, for
all $b_1,\ldots,b_n\in A$. If $b\in A$ is irreducible with respect to all 
%ac
of the non-nullary 
%the 
operations of 
$\mathbf{A}$, then we call $b$ \emph{totally irreducible}.

Let $\K=\langle K,\wedge_\K,\vee_\K, \cdot_\K,\backslash_\K,/_\K,1_\K\rangle$
and $\Lalg=\langle L,\wedge_\Lalg,\vee_\Lalg,\cdot_\Lalg, \backslash_\Lalg,/_\Lalg,1_\Lalg\rangle$ be
residuated lattices such that $K \cap L=\varnothing$
%ac reformulating 
and 
$1_\K$  is totally irreducible.
%ac moved later 
%and $\K$ satisfies:
%\begin{equation}\label{k-condition}
%	k\backslash_\K 1_\K\neq 1_\K \quad\text{ and }\quad 1_\K/_\K k\neq 1_\K\quad \text{ for all }k\in K\backslash\{1_\K\}
%\end{equation}
%The generalised ordinal sum, denoted by $\K[\Lalg]$,
%of $\K$ and $\Lalg$ is defined as follows:  
%let $\K[\Lalg]=
%\langle (K\backslash\{1_K\})\cup L, \wedge,\vee, \cdot, \backslash,/,1\rangle$
Let $K[L] = \left(K\backslash\{1_\K\}\right)\cup L$ and $1_{\K[\Lalg]} = 1_\Lalg$. %and extend the operations of $\K$ and $\Lalg$ to $K[L]$ as follows:
%such that 
For $m_0,m_1\in K[L]$
%ac 
and $\star\in\{\cdot,\backslash,/\}$, 
we define $\star_{\K[\Lalg]}$  by
%we define $\star_{\K[\Lalg]}$ with $\star\in\{\cdot,\backslash,/\}$ by,
\begin{numcases}{m_0\star_{\K[\Lalg]} m_1 =}
	m_0\star_{\K} m_1 &  if $m_0,m_1\in K\backslash\{1_\K\}$\nonumber\\
	m_0\star_{\Lalg} m_1 &  if $m_0,m_1\in L$\nonumber\\
	m_0\star_{\K} 1_\K &  if $m_0\in K\backslash\{1_\K\}$ and $m_1\in L$\nonumber\\
	1_\K\star_{\K} m_1 &  if $m_0\in L$ and $m_1\in K\backslash\{1_\K\}$;
	\label{OperationDef}
\end{numcases}
and $\wedge_{\K[\Lalg]}$ and $\vee_{\K[\Lalg]}$  are defined by, 
\begin{numcases}{m_0\wedge_{\K[\Lalg]} m_1 =}
	m_0\wedge_\K m_1 &  if $m_0,m_1\in K\backslash\{1_\K\}$\nonumber\\
	m_0\wedge_\Lalg m_1 & if $m_0,m_1\in L$\nonumber\\
	m_i &  if $m_i\in L, m_{1-i}\in K\backslash\{1_\K\}$  and $m_{1-i}\geqslant_\K 1_\K$\nonumber\\
	m_{1-i}\wedge_\K 1_\K&  if $m_{i}\in L, m_{1-i}\in K\backslash\{1_\K\}$ and $m_{1-i}\ngeqslant_\K 1_\K$	\label{MeetDefinition}
\end{numcases}
and
\begin{numcases}{m_0\vee_{\K[\Lalg]} m_1 =}
	m_0\vee_\K m_1 &  if $m_0,m_1\in K\backslash\{1_\K\}$\nonumber\\
	m_0\vee_\Lalg m_1 &  if $m_0,m_1\in L$\nonumber\\
	m_i &  if $m_i\in L, m_{1-i}\in K\backslash\{1_\K\}$ and  $m_{1-i}\leqslant_\K 1_\K$\nonumber\\
	m_{1-i}\vee_\K 1_\K&  if $m_{i}\in L, m_{1-i}\in K\backslash\{1_\K\}$ and $m_{1-i}\nleqslant_\K 1_\K$.\label{JoinDef}
\end{numcases}

%ac new formulation of remark
%We remark that the following implication was not observed in \cite{G04}. 

%ac2611
Sufficient conditions to be placed upon $\K$ and $\Lalg$ are given in~\cite{G04} that ensure  $\koflbf$ supports the
structure of a residuated lattice with the operations as defined above. There, $\K$ is {\em admissible} by $\Lalg$ if
$1_\K$ is totally irreducible and either $\Lalg$ is integral or $\K$  satisfies
%In the original definition of the $\koflbf$ construction~\cite{G04}, it was stated that in addition to $1_\K$ being totally irreducible, $\K$ also needed to satisfy:
\begin{equation}\label{k-condition}
	k\backslash_\K 1_\K\neq 1_\K \quad\text{ and }\quad 1_\K/_\K k\neq 1_\K\quad \text{ for all }k\in K\backslash\{1_\K\}.
\end{equation}
%or, $\Lalg$ needed to be integral. 
%ac2611 
%We prove in
In
Lemma~\ref{lem:ResIrr-implies-CondK} below 
we prove
that, in fact, the 
%condition 
condition~(\ref{k-condition})
above is 
%ac2611
actually subsumed by the assumption that 
$1_\K$ be totally irreducible in $\K$. 

In the sequel we omit subscripts in the proofs where no confusion can arise. \\
%not required. 

%Furthermore, the requirements on $1_\K$ for $\K$ are needed for the closure of 
%$\left(K\backslash\{1_\K\}\right)\cup L$
%under the monoid and residuals, and can therefore not be replaced by a requirement that  $\Lalg$ be integral, as seems to be the case in~\cite{G04}.\\

%{\color{red}Suppose there exists $k \in K{\setminus}\{1_\K\}$ s.t. $1/k=1$ and suppose $L$ is integral. Then for any $\ell \in L$ $\ell/_{\koflbf} k = 1_\K / k = 1_\K$ but $1_\K \notin K[L]$.}
%ac end of move

%ac2611 
\begin{lemma}\label{lem:ResIrr-implies-CondK}
Let $\K$ be a residuated lattice with $1_\K$ irreducible with respect to $\backslash_\K$ and $/_\K$. Then $\K$ satisfies {\upshape (\ref{k-condition})}.
\end{lemma}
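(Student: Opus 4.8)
The plan is to argue by contradiction, and the crucial observation is that one cannot contradict irreducibility \emph{directly} from the hypothesis. Indeed, if $k\backslash_\K 1_\K = 1_\K$, this equation has $1_\K$ itself as its numerator, so irreducibility of $1_\K$ with respect to $\backslash_\K$ is satisfied trivially and yields nothing. The key idea is therefore to manufacture, from such a $k$, a \emph{different} witness of the form $k\backslash_\K k = 1_\K$, in which \emph{both} arguments equal $k\neq 1_\K$; this one genuinely contradicts irreducibility.

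Concretely, suppose for contradiction that $k\backslash_\K 1_\K = 1_\K$ for some $k\in K\setminus\{1_\K\}$. First I would record that $k\leqslant_\K 1_\K$: using the standard residuation fact that $1_\K\leqslant_\K a\backslash_\K b$ iff $a\leqslant_\K b$, the hypothesis $k\backslash_\K 1_\K = 1_\K\geqslant_\K 1_\K$ gives $k\leqslant_\K 1_\K$, hence $k<_\K 1_\K$. Next, since $\backslash_\K$ is order-preserving in its numerator and $k\leqslant_\K 1_\K$, we obtain
\[
  k\backslash_\K k \;\leqslant_\K\; k\backslash_\K 1_\K \;=\; 1_\K .
\]
On the other hand $1_\K\leqslant_\K k\backslash_\K k$ always holds in a residuated lattice, because $k\cdot_\K 1_\K = k\leqslant_\K k$. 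Combining the two inequalities yields $k\backslash_\K k = 1_\K$ with $k\neq 1_\K$, which contradicts the irreducibility of $1_\K$ with respect to $\backslash_\K$. This proves $k\backslash_\K 1_\K\neq 1_\K$ for all $k\in K\setminus\{1_\K\}$.

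The second conjunct $1_\K/_\K k\neq 1_\K$ is handled by the mirror-image argument on the right residual: from $1_\K/_\K k = 1_\K$ one gets $k\leqslant_\K 1_\K$, then $k/_\K k\leqslant_\K 1_\K/_\K k = 1_\K$ by monotonicity of $/_\K$ in its numerator, while $1_\K\leqslant_\K k/_\K k$ holds as before; so $k/_\K k = 1_\K$ with $k\neq 1_\K$ contradicts irreducibility with respect to $/_\K$. Together these establish both inequalities in~(\ref{k-condition}).

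I do not expect any genuine obstacle in the calculations themselves — they are one-line monotonicity facts about residuals applied symmetrically to $\backslash_\K$ and $/_\K$. The only subtle point, and the part worth stating carefully, is the \emph{choice} of witness: passing from the useless identity $k\backslash_\K 1_\K = 1_\K$ (where the numerator is already $1_\K$) to the useful identity $k\backslash_\K k = 1_\K$ (where neither argument is $1_\K$). Everything else is routine residuated-lattice manipulation.
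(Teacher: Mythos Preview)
Your proof is correct and follows essentially the same route as the paper's: assume $k\backslash 1=1$ (resp.\ $1/k=1$) for some $k\neq 1$, deduce $k\leqslant 1$ via residuation, then use monotonicity in the numerator together with $1\leqslant k\backslash k$ to obtain $k\backslash k=1$ (resp.\ $k/k=1$), contradicting irreducibility. The paper happens to write out the $/$ case first while you lead with $\backslash$, but the argument is identical.
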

\begin{proof}
Assume that $1$ is  irreducible with respect to $\backslash$ and $/$, and then suppose that there exists  $k \neq 1$ such that  $1/k=1$ or $k\backslash 1 =1$. 

If $1/k=1$, we get $1 \leqslant 1/k$, and so by residuation $k=1\cdot k \leqslant 1$. Since $/$ is order-preserving in the first argument, we have $k/k\leqslant 1/k=1$. By residuation, we always have $1 \leqslant k/k$ so $k/k=1$. But this is a contradiction since $1$ is irreducible with respect to $/$. 

If $k \backslash 1=1$, we follow a similar argument to reach a contradiction. 
Hence, no such $k$ can exist and so $\K$ satisfies (\ref{k-condition}).  
\end{proof}
%ac0112 missed edit 
Consequently,
$1_\K$ being totally irreducible entails $\K$ is admissible by any residuated lattice $\Lalg$.

We further note that the definitions
of $\wedge_{\koflbf}$ and $\vee_{\koflbf}$ given above differ from the definitions given in~\cite{G04}. This is to ensure closure of 
$K[L]$
under meet and join. 
%ac2611 
\begin{comment}
For example, if $\mathbf{K}=\{a,1_\K,b\}$ with $a<1_\K<b$, then under the original definition for any $c \in L$ we get  $b \wedge_{\koflbf} c = 1_\K \notin K[L]$. \\
\end{comment}
For example, if $\mathbf{K}=\mathbf{S}_3$, then under the original definition for any $c \in L$ we get  $b \wedge_{\koflbf} c = 1_\K \notin K[L]$. 

The following lemma is an immediate consequence of the construction
described above.

\vskip 0.2cm

\begin{lemma}\label{Lemma:K[L]ordering}
	Let $\K[\Lalg]$ be the nested sum of residuated
	lattices $\K$ and $\Lalg$ 
    such that $1_\K$ is totally irreducible. 
%ac
%and $\K$ satisfies {\upshape (\ref{k-condition})}.
    Let $k_0,k_1\in K\backslash\{1_\K\}$ and $\ell_0,\ell_1\in L$,
	then
	\[k_0\leqslant_{\K[\Lalg]} \ell_0 \iff k_0\leqslant_\K 1_\K,\qquad
	\ell_0\leqslant_{\K[\Lalg]} k_0\iff k_0\geqslant_\K 1_\K,
	\]
	\[k_0\leqslant_{\K[\Lalg]} k_1\iff k_0\leqslant_\K k_1, \qquad
	\ell_0\leqslant_{\K[\Lalg]} \ell_1\iff \ell_0\leqslant_\Lalg \ell_1.\]
\end{lemma}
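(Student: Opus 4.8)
The plan is to read every claim off the piecewise definitions of $\wedge_{\K[\Lalg]}$ and $\vee_{\K[\Lalg]}$ in~(\ref{MeetDefinition}) and~(\ref{JoinDef}). Since the construction already guarantees that $\langle K[L],\wedge_{\K[\Lalg]},\vee_{\K[\Lalg]}\rangle$ is a lattice, its order is recovered from either operation via $x\leqslant_{\K[\Lalg]} y \iff x\wedge_{\K[\Lalg]} y = x \iff x\vee_{\K[\Lalg]} y = y$, so each of the four equivalences becomes a one-line computation. Throughout I would use the standing hypotheses $K\cap L=\varnothing$ and that $1_\K$ is totally irreducible, the latter giving in particular that $1_\K$ is irreducible with respect to $\wedge_\K$ and $\vee_\K$.

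For the two ``same-side'' statements I would invoke the first two clauses of~(\ref{MeetDefinition}). For $k_0,k_1\in K\backslash\{1_\K\}$ we have $k_0\wedge_{\K[\Lalg]} k_1 = k_0\wedge_\K k_1$, whence $k_0\leqslant_{\K[\Lalg]} k_1 \iff k_0\wedge_\K k_1=k_0 \iff k_0\leqslant_\K k_1$; here irreducibility of $1_\K$ with respect to $\wedge_\K$ ensures $k_0\wedge_\K k_1\neq 1_\K$, so this value genuinely lies in $K[L]$. Identically, $\ell_0\wedge_{\K[\Lalg]} \ell_1=\ell_0\wedge_\Lalg \ell_1$ yields $\ell_0\leqslant_{\K[\Lalg]} \ell_1 \iff \ell_0\leqslant_\Lalg \ell_1$.

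For the two ``mixed'' statements the decisive observation is that $K\cap L=\varnothing$: a meet or join that is computed to sit in $K$ can never equal an element of $L$, and conversely. To establish $k_0\leqslant_{\K[\Lalg]} \ell_0 \iff k_0\leqslant_\K 1_\K$ I would compute $k_0\vee_{\K[\Lalg]} \ell_0$ from the last two clauses of~(\ref{JoinDef}): if $k_0\leqslant_\K 1_\K$ the join is $\ell_0$, so $k_0\leqslant_{\K[\Lalg]} \ell_0$; if $k_0\nleqslant_\K 1_\K$ the join is $k_0\vee_\K 1_\K\in K\backslash\{1_\K\}$ (unequal to $1_\K$ precisely because $k_0\nleqslant_\K 1_\K$), which cannot equal $\ell_0\in L$, forcing $k_0\nleqslant_{\K[\Lalg]} \ell_0$. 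Dually, for $\ell_0\leqslant_{\K[\Lalg]} k_0 \iff k_0\geqslant_\K 1_\K$ I would compute $\ell_0\wedge_{\K[\Lalg]} k_0$ from the last two clauses of~(\ref{MeetDefinition}): it equals $\ell_0$ when $k_0\geqslant_\K 1_\K$ (so $\ell_0\leqslant_{\K[\Lalg]} k_0$) and equals $k_0\wedge_\K 1_\K\in K\backslash\{1_\K\}$ when $k_0\ngeqslant_\K 1_\K$ (so the meet lies in $K$, not in $L$, and $\ell_0\nleqslant_{\K[\Lalg]} k_0$).

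I expect no serious obstacle; the whole content is bookkeeping of the case splits. The only points warranting a moment's care are (i) confirming that each computed meet or join actually lands in $K[L]$ --- which is exactly what the disjointness and irreducibility hypotheses secure, and is precisely why the modified meet/join definitions were adopted --- and (ii) the tacit use, inherited from the construction, that $\K[\Lalg]$ is a lattice, so that $\leqslant_{\K[\Lalg]}$ is indeed determined by $\wedge_{\K[\Lalg]}$ (equivalently by $\vee_{\K[\Lalg]}$).
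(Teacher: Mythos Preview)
Your approach is correct and is precisely what the paper has in mind: it states the lemma as ``an immediate consequence of the construction described above'' without further proof, and your case-by-case reading of (\ref{MeetDefinition}) and (\ref{JoinDef}) is exactly that immediate consequence. One small caution on logical order: in the paper this lemma precedes Lemma~\ref{Gal-K-construction} (which establishes that $\K[\Lalg]$ is a residuated lattice) and is in fact invoked in its proof, so you should not appeal to $\K[\Lalg]$ already being a lattice to conclude that the meet- and join-characterisations of $\leqslant_{\K[\Lalg]}$ agree; simply take $\leqslant_{\K[\Lalg]}$ as \emph{defined} by $x\wedge_{\K[\Lalg]} y = x$ (or dually) and run each of your four verifications through that single operation --- the computations you wrote go through unchanged.
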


\vskip 0.2cm 

With Lemma~\ref{Lemma:K[L]ordering} we can now prove that the construction described above does indeed produce a residuated lattice. 

\vskip 0.2cm 

\begin{lemma}[{\cite[Lemma 6.1]{G04}}]
\label{Gal-K-construction}
If $\K$ and $\Lalg$ are  residuated lattices
such that $1_\K$ is totally irreducible,
then the structure ${\K[\Lalg]} = \langle K[L], \wedge_{\K[\Lalg]},\vee_{\K[\Lalg]}, \cdot_{\K[\Lalg]}, \backslash_{\K[\Lalg]},/_{\K[\Lalg]},1_{\K[\Lalg]}\rangle$,
%generalised ordinal sum $\K[\Lalg]$ of 
%$\K$ and $\Lalg$, 
as defined above, is a 
residuated lattice.
\end{lemma}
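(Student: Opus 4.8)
The plan is to verify directly that the explicitly defined structure $\K[\Lalg]$ satisfies the three defining conditions of a residuated lattice: that $\langle K[L],\wedge_{\K[\Lalg]},\vee_{\K[\Lalg]}\rangle$ is a lattice, that $\langle K[L],\cdot_{\K[\Lalg]},1_{\K[\Lalg]}\rangle$ is a monoid, and that $\cdot_{\K[\Lalg]}$ is residuated with residuals $\backslash_{\K[\Lalg]}$ and $/_{\K[\Lalg]}$. Throughout I would organise every argument around the partition of $K[L]$ into the \emph{$\K$-block} $K\setminus\{1_\K\}$ and the \emph{$\Lalg$-block} $L$, since membership of the arguments in these blocks selects which clause of {\upshape(\ref{OperationDef})}, {\upshape(\ref{MeetDefinition})} and {\upshape(\ref{JoinDef})} applies. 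Two earlier facts do the heavy lifting: Lemma~\ref{Lemma:K[L]ordering} identifies the induced order (elements of the $\K$-block below $1_\K$ sit below all of $L$, those above $1_\K$ sit above all of $L$, and the two blocks otherwise retain their own orders), and Lemma~\ref{lem:ResIrr-implies-CondK} lets me use condition~{\upshape(\ref{k-condition})} freely, since $1_\K$ is assumed totally irreducible.

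The first task is closure, which is where the present formulation corrects the one in~\cite{G04}. For meet and join the only non-trivial clauses are the mixed ones: when $m_i\in L$ and $m_{1-i}\in K\setminus\{1_\K\}$ with $m_{1-i}\ngeqslant_\K 1_\K$, the value $m_{1-i}\wedge_\K 1_\K$ lies strictly below $1_\K$ and is therefore back in the $\K$-block, and dually for join. For the monoid and residual operations, closure within the $\K$-block follows because total irreducibility of $1_\K$ with respect to $\cdot,\backslash,/$ forbids $m_0\star_\K m_1=1_\K$ unless an argument is $1_\K$; in the mixed clauses the two residual values that could a priori equal $1_\K$, namely $m_0\backslash_\K 1_\K$ and $1_\K/_\K m_1$, are kept off $1_\K$ precisely by {\upshape(\ref{k-condition})}. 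With closure in hand I would check the lattice axioms by showing that $\wedge_{\K[\Lalg]}$ and $\vee_{\K[\Lalg]}$ compute the meet and join of the order of Lemma~\ref{Lemma:K[L]ordering}; the pure-block cases are inherited from $\K$ and $\Lalg$, and the mixed cases are read off directly from that order description.

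For the monoid structure, the identity law is immediate once one observes that a mixed product collapses to its $\K$-block factor (as $m_0\cdot_\K 1_\K=m_0$ and $1_\K\cdot_\K m_1=m_1$), so that $1_{\K[\Lalg]}=1_\Lalg$ acts as a two-sided unit. Associativity I would verify by running through the cases for the block memberships of the three factors; most collapse to associativity in $\K$ or in $\Lalg$, the essential point being that a product of two $\K$-block elements stays in the $\K$-block (total irreducibility again) and that mixed products return the $\K$-factor, so brackets may be moved freely. Finally, residuation is proved by checking $a\cdot_{\K[\Lalg]}b\leqslant_{\K[\Lalg]}c\iff b\leqslant_{\K[\Lalg]}a\backslash_{\K[\Lalg]}c\iff a\leqslant_{\K[\Lalg]}c/_{\K[\Lalg]}b$ case by case, combining residuation in $\K$ and in $\Lalg$ with the order of Lemma~\ref{Lemma:K[L]ordering}.

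The main obstacle is the residuation verification in the mixed cases. When, say, $c$ lies in the $\Lalg$-block while $a$ or $b$ lies in the $\K$-block, the residual is computed through $a\backslash_\K 1_\K$ or $1_\K/_\K b$, and one must confirm that the resulting $\K$-block element sits in the correct order relationship to match the product inequality; this is exactly the step that would fail without {\upshape(\ref{k-condition})}, and hence without the total irreducibility of $1_\K$. The remaining cases, and the associativity check, are routine once the bookkeeping of block membership is set up.
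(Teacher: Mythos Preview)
Your proposal is correct and follows essentially the same approach as the paper: a direct case-by-case verification of the lattice, monoid, and residuation axioms organised around whether each argument lies in $K\setminus\{1_\K\}$ or in $L$, with closure secured by total irreducibility of $1_\K$ (and the consequent condition~(\ref{k-condition})), and with Lemma~\ref{Lemma:K[L]ordering} supplying the order translations in the mixed cases. The paper's proof is in fact even more abbreviated than your sketch, working out only one representative residuation case explicitly.
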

\begin{proof}
The proof that $K[L]$ forms a lattice and a monoid under the operations defined above is long but straightforward. 
When $m_0$ and $m_1$ are both elements of $K{\setminus}\{1_{\mathbf{K}}\}$, the total irreducibility of $1_{\mathbf{K}}$ is required to ensure that $m_0\star_{\koflbf} m_1$, $m_0 \wedge_{\koflbf} m_1$ and $m_0 \vee_{\koflbf} m_1$ are all  elements of $K{\setminus}\{1_{\mathbf{K}}\}$ and hence elements of $K[L]$. 
Then, when proving the lattice and monoid identities, one must consider all possible combinations of elements from $K\backslash\{1_\K\}$ and $L$. 
%ac10Dec

The proof of residuation again requires one to consider $m_0$, $m_1$, $m_2$ and all possible cases of membership of $K{\setminus}\{1_\K\}\cup L$. Here we prove just one equivalence for one of the cases. Let $m_1$, $m_2 \in K{\setminus}\{1_\K\}$ and $m_3 \in L$. Then 
\begin{align*}
m_1 \cdot_{\koflbf} m_2 \leqslant_{\koflbf} m_3 \Longleftrightarrow m_1 \cdot_\K m_2 \leqslant_{\koflbf} m_3 &\stackrel{(\ref{Lemma:K[L]ordering})}{\Longleftrightarrow} m_1 \cdot_\K m_2 \leqslant_{\K} 1_{\K} \\ 
&\Longleftrightarrow  m_2 \leqslant_{\K}m_1 \backslash_{\K} 1_{\K}\\
&\Longleftrightarrow m_2 \leqslant_{\koflbf} m_1 \backslash_{\koflbf} m_3. 
\end{align*}
The other cases make similar use of Lemma~\ref{Lemma:K[L]ordering} and residuation in $\K$ and $\Lalg$. 
\end{proof}

%to prove that the axioms hold.
%Firstly, we proved that the lattice reduct of $\K[\Lalg]$ is a lattice by confirming that the lattice axioms hold for $\wedge_{\K[\Lalg]}$ and $\vee_{\K[\Lalg]}$, as defined above.  Next we showed that $(K[L],\cdot_{\K[\Lalg]}, 1_{\K[\Lalg]})$ is a monoid, i.e.,  $1_{\K[\Lalg]}=1_{\Lalg}$ is the identity with respect to $\cdot_{\K[\Lalg]}$ and  $\cdot_{\K[\Lalg]}$ is associative.  Finally we showed that $\cdot_{\K[\Lalg]}$ is residuated with $\backslash_{\K[\Lalg]}$ and $/_{\K[\Lalg]}$ as its left and right residuals, respectively, which requires that $1_\K$ is totally irreducible.

The algebra ${\K[\Lalg]}$ is called the \emph{nested sum} of 
$\K$ and $\Lalg$. It is easy to see that if both $\K$ and $\Lalg$ are commutative, then so is 
%ac
%the the generalised ordinal sum 
$\K[\Lalg]$. 
%of $\K$ and $\Lalg$.  
Any idempotent element of $\K$ or $\mathbf{L}$ is also idempotent in $\K[\Lalg]$.

\vskip 0.4cm 
\begin{comment}
Let 
$\K=\langle K, \wedge_\K, \vee_\K, \cdot_\K, {\backslash}_\K,/_\K, 1_\K,0_\K\rangle$ and 
$\Lalg=\langle L, \wedge_\Lalg, \vee_\Lalg, \cdot_\Lalg, \backslash_\Lalg,/_\Lalg, 1_\Lalg,0_\Lalg\rangle$
be FL-algebras.  	
Recall that $K[L] = \left(K{\setminus}\{1_\K\}\right) \cup L$, let $\wedge_{\K[\Lalg]}$, $\vee_{\K[\Lalg]}$, $\cdot_{\K[\Lalg]}$ and $1_{\K[\Lalg]}$ be as defined before and set $0_{\K[\Lalg]}=0_\Lalg$. 
If $1_\K$ is 
totally irreducible, then an immediate consequence of 
Lemma~\ref{Gal-K-construction} is that the generalised ordinal sum
$\K[\Lalg]=\langle K[L], \wedge_{\K[\Lalg]}, \vee_{\K[\Lalg]}, \cdot_{\K[\Lalg]}, {\backslash}_{\K[\Lalg]},/_{\K[\Lalg]}, 1_{\K[\Lalg]},0_{\K[\Lalg]}\rangle$ 
of $\K$ and $\Lalg$
is an FL-algebra. 
\end{comment}
%ac2611 
Let 
$\K$ and 
$\Lalg$
be FL-algebras with respective negation constants $0_\K$ and $0_\Lalg$.  	
If $1_\K$ is 
totally irreducible, then 
%an immediate consequence of 
%Lemma~\ref{Gal-K-construction} is that 
the nested sum $\K[\Lalg]$ 
of the 0-free reducts of $\K$ and $\Lalg$  form a residuated lattice by Lemma~\ref{Gal-K-construction},
from which an FL-algebra may be obtained by defining a negation constant.
We extend
the definition of the nested sum $\K[\Lalg]$ to FL-algebras $\K$ and $\Lalg$ as the expansion with the negation constant defined by $0_{\K[\Lalg]}:= 0_\Lalg$.
%is an FL-algebra. 
%

Recall that for FL-algebras $\K$, $\Lalg$ and $\koflbf$, the linear negations, ${\sim}_{\K},{-}_{\K}$ and ${\sim}_{\Lalg},{-}_{\Lalg}$ and ${\sim}_{\koflbf},{-}_{\koflbf}$ can be defined on $\K$, $\Lalg$ and $\koflbf$, respectively, in terms of the residuals and the $0$.  In particular, for
$m\in K[L]$,
\[
{\sim}_{\koflbf} m=m\backslash_{\koflbf}0_{\koflbf}\quad\text{and}
\quad {-}_{\koflbf} m=0_{\koflbf}/_{\koflbf}m
\]

%ac2611 signature 
\begin{lemma}\label{Lemma:IrreducibilityFL}
Let $\K$
%=\langle K, \wedge_\K, \vee_\K, \cdot_\K, {\backslash}_\K,/_\K, 1_\K,0_\K\rangle$
be an FL-algebra such that $1_{\K}$ is irreducible with respect to the residuals.
Then $1_{\K}$ is irreducible with respect to
${\sim}_{\K}$ and ${-}_{\K}$.
\end{lemma}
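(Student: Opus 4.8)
The plan is to handle the two linear negations by dual arguments, first unwinding their definitions ${\sim}_\K a = a\backslash_\K 0_\K$ and ${-}_\K a = 0_\K/_\K a$, and then reducing irreducibility with respect to each of these unary operations to the hypothesised irreducibility with respect to the corresponding residual. Since ${\sim}_\K$ and ${-}_\K$ are unary, irreducibility of $1_\K$ with respect to ${\sim}_\K$ means precisely that ${\sim}_\K a = 1_\K$ forces $a = 1_\K$, and similarly for ${-}_\K$; so these are the two implications I must establish.

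First I would take $a \in K$ with ${\sim}_\K a = 1_\K$, that is, $a\backslash_\K 0_\K = 1_\K$. Applying the hypothesis that $1_\K$ is irreducible with respect to $\backslash_\K$ immediately yields $a = 1_\K$ or $0_\K = 1_\K$. The first disjunct is exactly the desired conclusion, so the entire difficulty is concentrated in the degenerate (odd) case $0_\K = 1_\K$.

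To dispose of that case I would argue by contradiction, assuming $a \neq 1_\K$. Substituting $0_\K = 1_\K$ into $a\backslash_\K 0_\K = 1_\K$ gives $a\backslash_\K 1_\K = 1_\K$, whence $1_\K \leqslant_\K a\backslash_\K 1_\K$, and residuation forces $a = a\cdot_\K 1_\K \leqslant_\K 1_\K$. Using that $\backslash_\K$ is order-preserving in its second argument, together with the standing inequality $1_\K \leqslant_\K a\backslash_\K a$, I would then deduce $1_\K \leqslant_\K a\backslash_\K a \leqslant_\K a\backslash_\K 1_\K = 1_\K$, so that $a\backslash_\K a = 1_\K$. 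Irreducibility of $1_\K$ with respect to $\backslash_\K$ now forces $a = 1_\K$, contradicting the assumption and closing the case. This establishes irreducibility with respect to ${\sim}_\K$.

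The argument for ${-}_\K$ is the mirror image: starting from $0_\K/_\K a = 1_\K$ and appealing to irreducibility with respect to $/_\K$, with order-preservation of $/_\K$ in its first argument and the inequality $1_\K \leqslant_\K a/_\K a$ playing the roles of their $\backslash_\K$ analogues. I expect no fresh obstacle there. The main, and indeed only, obstacle is the odd case $0_\K = 1_\K$, which the residual-irreducibility hypothesis is precisely strong enough to rule out, in close analogy with the proof of Lemma~\ref{lem:ResIrr-implies-CondK}.
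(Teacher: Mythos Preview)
Your proof is correct and follows essentially the same approach as the paper: unwind the definitions of ${\sim}$ and ${-}$ in terms of the residuals, apply the irreducibility hypothesis to obtain the disjunction $a=1_\K$ or $0_\K=1_\K$, and then dispose of the odd case. Your treatment of the case $0_\K=1_\K$ via the argument of Lemma~\ref{lem:ResIrr-implies-CondK} is in fact cleaner and more explicit than the paper's compressed version.
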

\begin{proof}
   We consider the case for $-$. Assume $- k = 1$. Then  $0\backslash k=1$, and so, since $1_\K$ is irreducible with respect to $\backslash_\K$, we have $0 = 1$ or $k = 1$. If $k = 1$, we are done, so assume $0= 1$. Then $k = 1\backslash k = 0 \backslash k = 1$, as required. 
\end{proof} 

The following equivalent statements for an
InFL-algebra show that
the total irreducibility of the monoid identity is
independent of our choice of signature for
an InFL-algebra and can be characterised 
as precisely those InFL-algebras that are odd.\\

%ac2611 reduce signat 
\begin{lemma}\label{Lemma:IrreducibilityImplied}
Let $\K$
%=\langle K,\wedge_\K,\vee_{\K},\cdot_{\K},\sim_{\K},{-}_{\K},1_{\K},0_{\K}\rangle$ 
be an InFL-algebra such that $1_{\K}$ is irreducible with respect to $\cdot_{\K}$.  Then the  following statements are equivalent:
    \begin{enumerate}[label={\upshape(\roman*\upshape)}]
        \item  $1_\K$ is irreducible with respect to ${\sim}_\K$ and ${-}_\K$.
        \item $1_\K$ is irreducible with respect to $\backslash_{\K}$
        and $/_{\K}$.
        \item $\K$ is odd, i.e., $1_{\K}=0_{\K}$.
    \end{enumerate}
\end{lemma}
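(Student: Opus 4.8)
The plan is to prove the equivalences via the cycle (i)~$\Rightarrow$~(iii)~$\Rightarrow$~(ii)~$\Rightarrow$~(i), closing the loop with Lemma~\ref{Lemma:IrreducibilityFL}, which already supplies (ii)~$\Rightarrow$~(i). Throughout I would keep the standing hypothesis that $1_{\K}$ is irreducible with respect to $\cdot_{\K}$, together with the identities $0={\sim}1=-1$, the involution ${-}{\sim}a=a={\sim}{-}a$, and the residual formulas $a\backslash c={\sim}(-c\cdot a)$ and $c/b=-(b\cdot{\sim}c)$ from~(\ref{Eqn:ResidualsFromNegations}). The whole point is that these formulas let me translate equations of the form ``$\,\cdot\,=1$'' into product equations, where the $\cdot$-irreducibility of $1$ can be applied, and oddness lets me pass back and forth between $0$ and $1$.

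For (i)~$\Rightarrow$~(iii): starting from ${\sim}1=0$ and applying $-$, the involution gives $-0={-}{\sim}1=1$. Since $1_{\K}$ is irreducible with respect to the unary operation $-$, the equation $-0=1$ forces $0=1$, i.e.\ $\K$ is odd. (Only one of the two negations is needed here; symmetrically one could start from $-1=0$, deduce ${\sim}0=1$, and invoke irreducibility with respect to ${\sim}$.) This step is immediate.

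For (iii)~$\Rightarrow$~(ii): assume $0=1$ and suppose $a\backslash c=1$. Rewriting via the residual formula gives ${\sim}(-c\cdot a)=1$; applying $-$ and using the involution together with $-1=0=1$ yields $-c\cdot a=1$. Now $\cdot_{\K}$-irreducibility of $1$ gives $-c=1$ or $a=1$, and in the first case applying ${\sim}$ with ${\sim}1=0=1$ gives $c={\sim}{-}c=1$. Hence $a=1$ or $c=1$, so $1_{\K}$ is irreducible with respect to $\backslash_{\K}$; the argument for $/_{\K}$ is the exact mirror image using $c/b=-(b\cdot{\sim}c)$ and applying ${\sim}$ then $-$. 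Finally (ii)~$\Rightarrow$~(i) is precisely Lemma~\ref{Lemma:IrreducibilityFL}, so no further work is required.

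None of the three implications is genuinely difficult, but the one step demanding care is (iii)~$\Rightarrow$~(ii): there I must convert each ``$=1$'' hypothesis into a product equation by applying the correct negation (so that the involution collapses the composite), and then repeatedly use $0=1$ to move between the negation constant and the unit. The only real risk is a direction slip in choosing which of ${\sim}$ and $-$ to apply at each stage, so I would carry out the $\backslash$ case in full and then simply indicate the symmetric substitution for the $/$ case rather than re-deriving it.
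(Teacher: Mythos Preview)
Your proof is correct. The only difference from the paper is organizational: the paper establishes the two equivalences (i)~$\Leftrightarrow$~(ii) and (i)~$\Leftrightarrow$~(iii) separately, whereas you close a single cycle (i)~$\Rightarrow$~(iii)~$\Rightarrow$~(ii)~$\Rightarrow$~(i). In particular, for the implication leading to~(ii) the paper argues (i)~$\Rightarrow$~(ii) directly, using the assumed irreducibility of $1$ under ${\sim}$ and $-$ to pass from ${\sim}(-k_2\cdot k_1)=1$ to $-k_2\cdot k_1=1$ and then from $-k_2=1$ to $k_2=1$; you instead pass through~(iii) first and use oddness ($-1=0=1$) together with the involution to make the same deductions. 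The underlying mechanics---the residual formulas~(\ref{Eqn:ResidualsFromNegations}), the involution $\textsf{(In)}$, and the $\cdot$-irreducibility hypothesis---are identical, and your (i)~$\Rightarrow$~(iii) step matches the paper's verbatim.
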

\begin{proof}
(i) $\Rightarrow$ (ii):  Assume that $\K$ is an InFL-algebra such that $1$ is irreducible with respect to $\cdot$, ${\sim}$ and ${-}$ and let $k_1,k_2\in K$
such that $k_1\backslash k_2=1$. Then by~(\ref{Eqn:ResidualsFromNegations}) we have that
    $k_1\backslash k_2={\sim}\left({-}k_2\cdot k_1\right)=1$ which implies that ${-}k_2\cdot k_1=1$, hence ${-} k_2=1$ or $k_1=1$ 
    and therefore $k_2=1$ or $k_1=1$ since $1$ is irreducible with respect to $\cdot$, ${\sim}$ and ${-}$, respectively. That is, $1$ is irreducible with respect to $\backslash$.
    The proof that $1$ is irreducible with respect 
    to $/$ follows similarly.
    
(ii) $\Rightarrow$ (i): This implication holds even in the case of FL-algebras as shown in Lemma~\ref{Lemma:IrreducibilityFL}.

    (i) $\Rightarrow$ (iii):  Assume that $\K$ is an InFL-algebra such that $1$ is irreducible with respect to $\cdot$, ${\sim}$ and ${-}$.  In any FL-algebra we have ${\sim} 1 =0$, and so by \textsf{(In)},
$1={-}{\sim} 1=-0$ and finally since $1$ is irreducible with respect to ${-}$ we have that $0=1$.

(iii) $\Rightarrow$ (i):  Assume that $\K$ is an odd InFL-algebra such that $1$ is irreducible with respect to $\cdot$.  Let $k\in K$ such that
    ${\sim}k=1$.  Then
    $k={-}{\sim} k={-}1=0=1$.
    If ${-}k=1$, then 
    $k={\sim}{-} k={\sim}1=0=1$.
\end{proof}

%ac rewording for better line break. 
%ac2611 reduce signatures? 
Now consider InFL-algebras  $\K$ and $\Lalg$ such that $1_{\K}$ is totally irreducible (and hence, $\K$ is odd). 
%Now let $\K=\langle K,\wedge_\K,\vee_{\K},\cdot_{\K},\sim_{\K},{-}_{\K},1_{\K},0_{\K}\rangle$ and $\Lalg=\langle L,\wedge_\Lalg,\vee_{\Lalg},\cdot_{\Lalg},\sim_{\Lalg},{-}_{\Lalg},1_{\Lalg},0_{\Lalg}\rangle$ be InFL-algebras such that $1_{\K}$ is totally irreducible (and hence, $\K$ is odd). 
%and $\K$ satisfies~(\ref{k-conditionVersion2}).
For $K[L]$ with
$\wedge_{\K[\Lalg]}$, $\vee_{\K[\Lalg]}$, $\cdot_{\K[\Lalg]}$, ${\sim}_{\koflbf}$,
${-}_{\koflbf}$, $1_{\K[\Lalg]}$ and $0_{\koflbf}$
defined as before, we call  
$\koflbf=\langle K[L],\wedge_{\koflbf},\vee_{\koflbf},\cdot_{\koflbf},{\sim}_{\koflbf},{-}_{\koflbf},1_{\koflbf},0_{\koflbf}\rangle$
the nested sum of $\K$ and $\Lalg$.\\

\begin{lemma}\label{Lemma:NegationBehaviour}
Let  $\K[\Lalg]$ be the nested sum of InFL-algebras $\K$ and $\Lalg$, as defined above
with $1_\K$ totally irreducible {\upshape (}implying that $\K$ is odd{\upshape)}.
%and $\K$ satisfying {\upshape (\ref{k-conditionVersion2})}. 
Then, for $m\in K[L]$, we have 
\[
{\sim}_{\K[\Lalg]}m=
\begin{cases}
{\sim}_{\Lalg} m & \text{ if }m\in L\\
{\sim}_{\K} m & \text{ if }m\in K\backslash\{1_\K\},
\end{cases}
\quad \text{and} 
\quad 
{-}_{\K[\Lalg]}m=
\begin{cases}
{-}_{\Lalg} m & \text{ if }m\in L\\
{-}_{\K} m & \text{ if }m\in K\backslash\{1_\K\}.
\end{cases}
\]	
\end{lemma}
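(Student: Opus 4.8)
The plan is to compute the two negations on $\K[\Lalg]$ directly from their defining formulas ${\sim}_{\K[\Lalg]} m = m \backslash_{\K[\Lalg]} 0_{\K[\Lalg]}$ and ${-}_{\K[\Lalg]} m = 0_{\K[\Lalg]} /_{\K[\Lalg]} m$, splitting into the two cases $m \in L$ and $m \in K \setminus \{1_\K\}$ according to the piecewise definition (\ref{OperationDef}) of the residuals on the nested sum. The key preliminary observation is that the negation constant $0_{\K[\Lalg]} = 0_\Lalg = {\sim}_\Lalg 1_\Lalg$ lies in $L$, since $1_\Lalg \in L$ and $L$ is closed under ${\sim}_\Lalg$. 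This is what determines which clause of (\ref{OperationDef}) applies in each case.

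First I would handle $m \in L$. Here both arguments $m$ and $0_\Lalg$ lie in $L$, so the second clause of (\ref{OperationDef}) gives $m \backslash_{\K[\Lalg]} 0_\Lalg = m \backslash_\Lalg 0_\Lalg = {\sim}_\Lalg m$, and symmetrically $0_\Lalg /_{\K[\Lalg]} m = 0_\Lalg /_\Lalg m = {-}_\Lalg m$. This case is immediate from the definitions. Next I would treat $m \in K \setminus \{1_\K\}$. For ${\sim}_{\K[\Lalg]}$ the left argument lies in $K \setminus \{1_\K\}$ while the right argument $0_\Lalg$ lies in $L$, so the third clause of (\ref{OperationDef}) applies and gives $m \backslash_{\K[\Lalg]} 0_\Lalg = m \backslash_\K 1_\K$; dually, the fourth clause gives $0_\Lalg /_{\K[\Lalg]} m = 1_\K /_\K m$.

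This is where the hypotheses do the work. Because $1_\K$ is totally irreducible, Lemma~\ref{Lemma:IrreducibilityImplied} tells us that $\K$ is odd, i.e. $0_\K = 1_\K$. Substituting $1_\K = 0_\K$ converts these expressions into the intrinsic linear negations of $\K$, namely $m \backslash_\K 1_\K = m \backslash_\K 0_\K = {\sim}_\K m$ and $1_\K /_\K m = 0_\K /_\K m = {-}_\K m$, which is exactly what the statement claims. I would also record that the outputs land in $K[L]$: since ${\sim}_\K m \in K$ and, by condition (\ref{k-condition}) (guaranteed by Lemma~\ref{lem:ResIrr-implies-CondK}), $m \backslash_\K 1_\K \neq 1_\K$ for $m \neq 1_\K$, we get ${\sim}_\K m \in K \setminus \{1_\K\} \subseteq K[L]$, and likewise for ${-}_\K m$.

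I do not anticipate a genuine obstacle: the argument is a routine unwinding of the piecewise definitions. The only conceptual point, and the step most easily overlooked, is the use of oddness to replace $1_\K$ by $0_\K$ in the residual expressions produced by the construction. It is precisely this substitution that ties the residuals-against-$1_\K$ coming from (\ref{OperationDef}) to the genuine negations ${\sim}_\K$ and ${-}_\K$ of the summand $\K$, and it is the reason the total irreducibility hypothesis (equivalently, oddness) cannot be dropped.
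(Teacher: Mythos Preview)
Your proposal is correct and follows essentially the same approach as the paper's proof: both compute the negations directly from the defining formulas using the piecewise definition~(\ref{OperationDef}) of the residuals, and both invoke oddness of $\K$ (via Lemma~\ref{Lemma:IrreducibilityImplied}) to rewrite $m\backslash_\K 1_\K$ and $1_\K/_\K m$ as ${\sim}_\K m$ and ${-}_\K m$. Your extra remark that the outputs land in $K\setminus\{1_\K\}$ is a nice well-definedness check that the paper leaves implicit.
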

\begin{proof}
Let $m\in K[L]$. If $m \in L$, then ${\sim}_{\K[\Lalg]}m=m\backslash_{\K[\Lalg]}0_{\K[\Lalg]}= 
m\backslash_\Lalg 0_\Lalg = {\sim}_\Lalg m$ and
${-}_{\K[\Lalg]}m=0_{\K[\Lalg]}/_{\K[\Lalg]} m =0_\Lalg/_\Lalg m = {-}_\Lalg m$.

Recall that since $1_\K$ is totally 
%ac2611 
irreducible, by Lemma~\ref{Lemma:IrreducibilityImplied}, $\K$ is odd. 
%irreducible, $\K$ is odd.
If $m\in K\backslash\{1_\K\}$, then
${\sim}_{\K[\Lalg]}m=m\backslash_{\K[\Lalg]}0_{\K[\Lalg]}=m\backslash_{\K[\Lalg]}0_{\Lalg}
=m\backslash_\K1_\K=m\backslash_\K0_\K={\sim}_\K m$ 
and
${-}_{\K[\Lalg]}m=0_{\K[\Lalg]}/_{\K[\Lalg]}m=0_{\Lalg}/_{\K[\Lalg]}m
=1_\K/_\K m=0_\K/_\K m={-}_\K m$.
\end{proof}

The following proposition follows immediately from Lemmas~\ref{Gal-K-construction},
~\ref{Lemma:IrreducibilityImplied} and \ref{Lemma:NegationBehaviour} %directly from their definitions 
and the fact that $\K$ and $\Lalg$ satisfy \textsf{(In)}. 

\vskip 0.2cm 

\begin{proposition}
	If $\K$ and $\Lalg$ are {\upshape(}cyclic{\upshape)} InFL-algebras such that
	$1_\K$ is totally irreducible {\upshape (}implying that $\K$ is odd{\upshape)},
    %and $\K$ satisfies {\upshape (\ref{k-conditionVersion2})},
    then their nested sum $\K[\Lalg]$
	is a {\upshape (}cyclic{\upshape)} InFL-algebra.\\
\end{proposition}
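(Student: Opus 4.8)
The plan is to read the InFL-algebra structure of $\koflbf$ directly off the three preceding lemmas. By Lemma~\ref{Gal-K-construction}, total irreducibility of $1_\K$ guarantees that the $0$-free reduct of $\koflbf$ is a residuated lattice, and adjoining the constant $0_{\koflbf}:=0_\Lalg$ turns it into an FL-algebra. What remains is to verify the involutive law \textsf{(In)}, that is, ${\sim}_{\koflbf}{-}_{\koflbf}m = m = {-}_{\koflbf}{\sim}_{\koflbf}m$ for every $m\in K[L]$, and, in the cyclic case, to check that ${\sim}_{\koflbf}={-}_{\koflbf}$.

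The two ingredients that make this verification essentially mechanical are Lemmas~\ref{Lemma:IrreducibilityImplied} and~\ref{Lemma:NegationBehaviour}. By Lemma~\ref{Lemma:IrreducibilityImplied}, the hypothesis that $1_\K$ is totally irreducible forces $\K$ to be odd and makes $1_\K$ irreducible with respect to ${\sim}_\K$ and ${-}_\K$. By Lemma~\ref{Lemma:NegationBehaviour}, the negations on $\koflbf$ split along the partition $K[L]=(K\backslash\{1_\K\})\cup L$: for $m\in L$ they coincide with ${\sim}_\Lalg,{-}_\Lalg$, and for $m\in K\backslash\{1_\K\}$ they coincide with ${\sim}_\K,{-}_\K$.

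The one point requiring care---and the only genuine obstacle---is that computing a composite of two negations via two successive applications of Lemma~\ref{Lemma:NegationBehaviour} is legitimate only if the first negation does not leave its home component. For $m\in L$ this is immediate, since $\Lalg$ is closed under its own negations. For $m\in K\backslash\{1_\K\}$ I would argue that ${\sim}_\K m\neq 1_\K$ and ${-}_\K m\neq 1_\K$: if, say, ${\sim}_\K m=1_\K$, then irreducibility of $1_\K$ with respect to ${\sim}_\K$ (granted by Lemma~\ref{Lemma:IrreducibilityImplied}) would give $m=1_\K$, contradicting $m\in K\backslash\{1_\K\}$. Hence each negation preserves the partition. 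With this in hand, \textsf{(In)} is checked case by case: for $m\in L$, two applications of Lemma~\ref{Lemma:NegationBehaviour} reduce ${\sim}_{\koflbf}{-}_{\koflbf}m$ to ${\sim}_\Lalg{-}_\Lalg m=m$ by \textsf{(In)} in $\Lalg$; for $m\in K\backslash\{1_\K\}$ they reduce it to ${\sim}_\K{-}_\K m=m$ by \textsf{(In)} in $\K$. The dual identity ${-}_{\koflbf}{\sim}_{\koflbf}m=m$ is handled symmetrically.

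Finally, cyclicity transfers directly. If $\K$ and $\Lalg$ are cyclic, then ${\sim}_\K={-}_\K$ and ${\sim}_\Lalg={-}_\Lalg$, so the case description of Lemma~\ref{Lemma:NegationBehaviour} yields ${\sim}_{\koflbf}m={-}_{\koflbf}m$ on each component of the partition, whence $\koflbf$ is cyclic. This completes the proposed argument.
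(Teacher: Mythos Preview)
Your proposal is correct and follows essentially the same approach as the paper, which cites Lemmas~\ref{Gal-K-construction}, \ref{Lemma:IrreducibilityImplied}, and \ref{Lemma:NegationBehaviour} and then writes out only the case $m\in K\setminus\{1_\K\}$. You are more explicit than the paper about why Lemma~\ref{Lemma:IrreducibilityImplied} is needed---namely, to guarantee that ${\sim}_\K m,{-}_\K m\in K\setminus\{1_\K\}$ so that Lemma~\ref{Lemma:NegationBehaviour} may be applied a second time---but this is exactly the implicit reason the paper invokes that lemma.
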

\begin{proof}
%ac2611 
Let $m \in K[L]$. If $m \in K{\setminus}\{1_\K\}$ then ${\sim}_{\koflbf} -_{\koflbf} m={\sim}_\K -_\K m =m$ since $\K$ is an InFL-algebra. Similarly for $m\in L$.
\end{proof}

%ac2611 reduce signatures 
Next let $\K$ and $\Lalg$ be InFL$'$-algebras with $1_\K$ totally irreducible,
%and $\K$ satisfying~(\ref{k-condition}), 
then their nested sum is the algebra 
$\K[\Lalg]=\langle K[L], \wedge_{\K[\Lalg]}, \vee_{\K[\Lalg]}, \cdot_{\K[\Lalg]}, {\sim}_{\K[\Lalg]}, {-}_{\K[\Lalg]},{\neg}_{\K[\Lalg]},1_{\K[\Lalg]},0_{\K[\Lalg]}\rangle$
where the InFL-reduct of $\K[\Lalg]$
is as defined above, and for $m\in K[L]$,
\[
\neg_{\K[\Lalg]}m=
\begin{cases}
	{\neg}_\Lalg m & \text{ if }m\in L\\
	{\neg}_\K m & \text{ if }m\in K\backslash\{1_\K\}.
\end{cases}
\]
Given this definition of ${\neg}_{\K[\Lalg]}$ it is
immediate that $\K[\Lalg]$ is an InFL$'$-algebra whenever
%ac2611
both
$\K$ and $\Lalg$ are.

Finally, assume that $\K$ are $\Lalg$ are qRAs.  Since
\textsf{(Di)} implies \textsf{(In)}, it follows as above that
if $1_\K$ is totally irreducible, then $\K$ is odd.\\

\begin{proposition}\label{Proposition:GOSqRA}
	If $\K$ and $\Lalg$ are qRAs such that
	$1_\K$ is totally irreducible {\upshape(}and hence $\K$ is odd{\upshape)},
   % and	$\K$ satisfies {\upshape (\ref{k-conditionVersion2})}, 
    then their 
	nested sum $\K[\Lalg]$
	is a qRA.
\end{proposition}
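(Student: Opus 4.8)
The plan is to exploit the fact that, by the defining formula for $\neg_{\K[\Lalg]}$ given just above together with Lemma~\ref{Lemma:NegationBehaviour}, all three unary operations ${\sim}_{\K[\Lalg]}$, ${-}_{\K[\Lalg]}$ and $\neg_{\K[\Lalg]}$ act \emph{componentwise}: on $L$ they coincide with the corresponding operation of $\Lalg$, and on $K\setminus\{1_\K\}$ with that of $\K$. Since $\K[\Lalg]$ has already been shown to be an InFL$'$-algebra, it remains only to verify the three qRA identities \textsf{(Dm)}, \textsf{(Di)} and \textsf{(Dp)}. For \textsf{(Di)} there is no mixing of components, so I would simply split on whether $a\in L$ or $a\in K\setminus\{1_\K\}$; in each case the componentwise description of ${\sim}$, ${-}$ and $\neg$ reduces the identity directly to \textsf{(Di)} holding in $\Lalg$ or in $\K$ respectively.

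For \textsf{(Dm)} the same-component cases reduce to \textsf{(Dm)} in the relevant factor, using total irreducibility of $1_\K$ (as in Lemma~\ref{Gal-K-construction}) to guarantee that a join of two elements of $K\setminus\{1_\K\}$ again lies in $K\setminus\{1_\K\}$. The substance is in the mixed case, say $a\in L$ and $b\in K\setminus\{1_\K\}$, the reverse following by commutativity of $\vee$. Here I would apply the piecewise definitions~(\ref{JoinDef}) and~(\ref{MeetDefinition}) and split according to whether $b\leqslant_\K 1_\K$. Since $\K$ is odd we have $\neg_\K 1_\K = 1_\K$, and since $\neg_\K$ is order-reversing in $\K$ (as $\K$ satisfies \textsf{(Dm)}), the condition $b\leqslant_\K 1_\K$ is equivalent to $\neg_\K b\geqslant_\K 1_\K$; this equivalence is exactly what makes the branch of~(\ref{JoinDef}) selected on the left-hand side $\neg(a\vee b)$ line up with the branch of~(\ref{MeetDefinition}) selected on the right-hand side $\neg a\wedge\neg b$. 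When $b\not\leqslant_\K 1_\K$, total irreducibility ensures $b\vee_\K 1_\K\neq 1_\K$, so $\neg(a\vee b)=\neg_\K(b\vee_\K 1_\K)=\neg_\K b\wedge_\K 1_\K$ by \textsf{(Dm)} in $\K$, which matches the fourth branch of the meet on the right since $\neg_\K b \not\geqslant_\K 1_\K$.

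For \textsf{(Dp)} I would first record how the dual product $+_{\K[\Lalg]}$, given by $x+y={\sim}(-x\cdot -y)$, behaves. Using Lemma~\ref{Lemma:NegationBehaviour}, the definition~(\ref{OperationDef}) of $\cdot_{\K[\Lalg]}$, and property \textsf{(In)} (to cancel the outer ${\sim}{-}$), one finds that $+_{\K[\Lalg]}$ again acts componentwise on $L$ and on $K\setminus\{1_\K\}$, while in the mixed situation $x+_{\K[\Lalg]}\ell=x$ and $\ell+_{\K[\Lalg]} x=x$ for $x\in K\setminus\{1_\K\}$ and $\ell\in L$. Since $\cdot_{\K[\Lalg]}$ exhibits the analogous absorption, namely $x\cdot_{\K[\Lalg]}\ell=x$ and $\ell\cdot_{\K[\Lalg]} x=x$ directly from~(\ref{OperationDef}), the mixed cases of \textsf{(Dp)} collapse to the trivial identity $\neg_\K x=\neg_\K x$, while the same-component cases reduce to \textsf{(Dp)} in $\Lalg$ or $\K$.

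I expect the mixed cases of \textsf{(Dm)} to be the main obstacle, since there the piecewise lattice definitions interact nontrivially with the negation $\neg$ and require oddness of $\K$ together with order-reversal of $\neg_\K$ to be invoked in just the right combination to force the two sides to choose matching branches. By contrast, once the absorption behaviour of $+_{\K[\Lalg]}$ is established, the \textsf{(Dp)} verification is essentially bookkeeping across the four membership cases.
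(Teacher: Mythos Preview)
Your proposal is correct and follows essentially the same approach as the paper. The only cosmetic differences are that the paper verifies \textsf{(Dm)} in the equivalent dual form $\neg(a\wedge b)=\neg a\vee\neg b$ (splitting on $m_0\geqslant_\K 1_\K$ rather than your $b\leqslant_\K 1_\K$), and for \textsf{(Dp)} the paper unfolds ${\sim}(-\neg m_0\cdot-\neg m_1)$ directly in the mixed case rather than first isolating the absorption behaviour of $+_{\K[\Lalg]}$ as you do; the underlying computations are identical.
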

\begin{proof}
	To prove that $\K[\Lalg]$ is a qRA, we need to show that it satisfies
	\textsf{(Dm)}, \textsf{(Di)} and \textsf{(Dp)}.  Let $m_0,m_1\in K[L]$. We first show that $\K[\Lalg]$ satisfies
	\textsf{(Dm)}.
	
	\vskip 0.2cm 
	
%	\noindent\underline{\textsf{(Dm)}:} 
If $m_0,m_1\in K\backslash\{1_\K\}$ or $m_0,m_1\in L$, then \textsf{(Dm)} is inherited from $\K$ or $\Lalg$, respectively.  
	For the remaining cases, we can assume without loss of generality
	that $m_0\in K\backslash\{1_\K\}$ and $m_1\in L$ since
	$\wedge_{\K[\Lalg]}$ and $\vee_{\K[\Lalg]}$ are commutative.  Now consider two cases.
If $m_0\geqslant_\K 1_\K$, then $\neg_\K m_0\leqslant_\K \neg_\K 1_\K=0_\K=1_\K$. 
%\marginpar{\small CR: I think we need a comment why 1' =0 for qRAs.} 
Using 
	this and Lemma~\ref{Lemma:K[L]ordering} we have that
	\[\neg_{\K[\Lalg]}(m_0\wedge_{\K[\Lalg]} m_1)=\neg_{\K[\Lalg]} m_1=\neg_\Lalg m_1
	=\neg_\K m_0\vee_{\K[\Lalg]}\neg_\Lalg m_1=
	\neg_{\K[\Lalg]}m_0\vee_{\K[\Lalg]}\neg_{\K[\Lalg]} m_1.\]
	
On the other hand, assume that $m_0\ngeqslant_\K 1_\K$. Since $\K$ is odd and $\K$  satisfies \textsf{(Dm)}, we get
	\begin{multline*}
	\neg_{\K[\Lalg]}(m_0\wedge_{\K[\Lalg]} m_1)=\neg_{\K[\Lalg]}(m_0\wedge_\K 1_\K)
	=\neg_\K(m_0\wedge_\K 1_\K)=\neg_\K m_0\vee_\K\neg_\K1_\K =\\
	\neg_\K m_0\vee_\K 0_\K=\neg_\K m_0\vee_\K 1_\K= \neg_\K m_0\vee_{\K[\Lalg]}\neg_\Lalg m_1
	=\neg_{\K[\Lalg]} m_0\vee_{\K[\Lalg]}\neg_{\K[\Lalg]} m_1.
	\end{multline*}
		
\textsf{(Di)} follows directly from the definition
	of %${\sim}_{\K[\Lalg]}$,  ${-}_{\K[\Lalg]}$, 
    $\neg_{\K[\Lalg]}$, Lemma~\ref{Lemma:NegationBehaviour} and the fact that $\K$ and $\Lalg$ satisfy \textsf{(Di)}.
	
	\vskip 0.2cm
	
Finally, we show that $\K[\Lalg]$ satisfies \textsf{(Dp)}.  As with \textsf{(Dm)}, 
	if $m_0,m_1\in K\backslash\{1_\K\}$ or $m_0,m_1\in L$,
	then \textsf{(Dp)} is inherited from $\K$ or $\Lalg$, respectively.
Next assume that $m_0\in K\backslash\{1_\K\}$ and 
	$m_1\in L$.  Then,
	\[\neg_{\K[\Lalg]}(m_0\cdot_{\K[\Lalg]} m_1)=
	\neg_{\K[\Lalg]}(m_0\cdot_\K 1_\K)=\neg_{\K[\Lalg]} m_0= \neg_\K m_0.\]	
	Since \textsf{(Di)} implies \textsf{(In)} we also have
\begin{align*}
{\sim}_{\K[\Lalg]}({-}_{\K[\Lalg]}(\neg_{\K[\Lalg]} m_0) \cdot_{\K[\Lalg]}
	{-}_{\K[\Lalg]} (\neg_{\K[\Lalg]} m_1))
 &= {\sim}_{\K[\Lalg]}({-}_{\K}(\neg_{\K} m_0) \cdot_{\K[\Lalg]}
	{-}_{\Lalg} (\neg_{\Lalg} m_1))\\
 &={\sim}_{\K[\Lalg]}({-}_{\K}(\neg_{\K} m_0) \cdot_{\K}
	1_\K)\\
 & ={\sim}_{\K[\Lalg]}{-}_{\K}(\neg_{\K} m_0)\\
 & ={\sim}_{\K}{-}_{\K}(\neg_{\K} m_0)\\
	&=\neg_\K m_0.
 \end{align*}
	The case where $m_0\in L$ and $m_1\in K\backslash\{1_\K\}$ follows similarly.	
\end{proof}

%ac linking sentence added
Now we give an example of the nested sum $\koflbf$ for qRAs $\K$ and $\Lalg$. \\

\begin{example}\label{Example:KofLConstruction}
Let $\K_1$ be the three-element Sugihara chain $\mathbf{S}_3$
{\upshape(}depicted in
Figure~\ref{Figure:GOS1}{\upshape)}. %with
%underlying set $S_3 = \{a_{-1}, a_0, a_1\}$ such that $a_0$ is the identity of the monoid operation.  
Then 
%$\K_1$ satisfies {\upshape 
%(\ref{k-conditionVersion2})} and 
$1_{\K_1}$ is totally irreducible.		
Let $\Lalg_1$ be the four-element algebra also depicted in
Figure~\ref{Figure:GOS1}
{\upshape(}also see Figure~\ref{fig:qRAs-examples} for
a full description{\upshape)}.
% with $L_1=\{b_0,b_1,b_2,b_3\}$
%such that $b_0\leqslant b$ and $b\leqslant b_1$ for all 
%$b\in L$, $b_2\nleqslant b_3$, $b_3\nleqslant b_2$ 
%with $0_{L_1}=b_0$ and $1_{L_1}=b_1$.
Since $\K_1$ and $\Lalg_1$ are qRAs such that $1_{\K_1}$ is totally irreducible, 
%and $\K_1$ satisfies  {\upshape 
%(\ref{k-conditionVersion2})}, 
it follows from Proposition~\ref{Proposition:GOSqRA} that  their
nested sum $\K_1[\Lalg_1]$ {\upshape(}the final algebra depicted in
Figure~\ref{Figure:GOS1}{\upshape)}
is a qRA. \\
%The non-lattice operations are defined via Proposition~\ref{Proposition:GOSqRA}.\\
%Figure~\ref{Figure:GOS1}
%depicts $\K_1$, $\Lalg_1$ and $\K_1[\Lalg_1]$.\\ %in parts {\upshape (i)}, {\upshape (ii)} and {\upshape (iii)}, respectively.
\end{example}

\begin{figure}[ht]
\centering
%acdiag: reduced scale 
\begin{tikzpicture}[scale=0.8]
%\begin{tikzpicture}[scale=1]
\begin{scope}
% Elements
\node[draw,circle,inner sep=1.5pt,fill] (bot) at (0,0) {};
\node[draw,circle,inner sep=1.5pt,fill] (id) at (0,1) {};
\node[draw,circle,inner sep=1.5pt,fill] (top) at (0,2) {};
% Order
\draw[order] (bot)--(id)--(top);
% Labels
\node[label,anchor=east,xshift=1pt] at (top) {$\top$};
\node[label,anchor=east,xshift=1pt] at (id) {$1_{\mathbf{K}_1}$};
\node[label,anchor=east,xshift=1pt] at (bot) {$\bot$};
\node[label,anchor=north,yshift=-20pt] at (bot) {$\K_1$};
\end{scope}

\begin{scope}[xshift=2.75cm]
% Elements
\node[draw,circle,inner sep=1.5pt,fill] (bot) at (1,0) {};
\node[draw,circle,inner sep=1.5pt,fill] (top) at (1,2) {};
\node[draw,circle,inner sep=1.5pt,fill] (b2) at (0,1) {};
\node[draw,circle,inner sep=1.5pt,fill] (b3) at (2,1) {};
% Order
\draw[order] (bot)--(b2);
\draw[order] (bot)--(b3);
\draw[order] (b2)--(top);
\draw[order] (b3)--(top);
% Labels
\node[label,anchor=south] at (top) {$1_{\mathbf{L}_1}$};
\node[label,anchor=north,xshift=1pt] at (bot) {$0$};
\node[label,anchor=east,xshift=1pt] at (b2) {$a$};
\node[label,anchor=west,xshift=-1pt] at (b3) {$b$};
\node[label,anchor=north,yshift=-20pt] at (bot) {$\Lalg_1$};
\end{scope}

%K of L
\begin{scope}[xshift=7.5cm]
%Elements
\node[draw,circle,inner sep=1.5pt,fill] (a1) at (1,0) {};
\node[draw,circle,inner sep=1.5pt,fill] (a2) at (1,4) {};
\node[draw,circle,inner sep=1.5pt,fill] (b0) at (1,1) {};
\node[draw,circle,inner sep=1.5pt,fill] (b1) at (1,3) {};
\node[draw,circle,inner sep=1.5pt,fill] (b2) at (0,2) {};
\node[draw,circle,inner sep=1.5pt,fill] (b3) at (2,2) {};
% Order
\draw[order] (b0)--(b2);
\draw[order] (b0)--(b3);
\draw[order] (b2)--(b1);
\draw[order] (b3)--(b1);
\draw[order] (a1)--(b0);
\draw[order] (b1)--(a2);
% Labels
\node[label,anchor=east,xshift=1pt,yshift=2pt] at (b1) {$1_{\mathbf{K}_1[\mathbf{L}_1]}$};
\node[label,anchor=east,yshift=-1pt] at (b0) {$0$};
\node[label,anchor=east,xshift=1pt] at (b2) {$a$};
\node[label,anchor=west,xshift=-1pt] at (b3) {$b$};
\node[label,anchor=east,xshift=1pt] at (a1) {$\bot$};
\node[label,anchor=east,xshift=1pt] at (a2) {$\top$};
\node[label,anchor=north,yshift=-20pt] at (a1) {$\K_1[\Lalg_1]$};
\end{scope}
\end{tikzpicture}	
\caption{qRAs $\K_1$, $\Lalg_1$ and their nested sum, the
qRA $\K_1[\Lalg_1]$.}
\label{Figure:GOS1}
\end{figure}

For our representation results in Section~\ref{sec:KL-rep} we require that $\mathbf{K[L]}$ be distributive. 
Hence we are interested in when distributivity will be preserved. Recall that a DqRA $\K$ is called {\em conic} if every $a\in K$ satisfies $a\leqslant_\K 1$ or $1\leqslant_\K a$.\\

\begin{theorem}\label{thm:ConicKDqRA}
	Let $\K$ and $\Lalg$ be DqRAs such that $\K$ is conic and $1_\K$ is 
	totally irreducible,
    %and $\K$ satisfies {\upshape (\ref{k-conditionVersion2})},
	then
    their nested sum $\K[\Lalg]$ is a DqRA.
\end{theorem}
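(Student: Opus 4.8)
The plan is to reduce the whole statement to a single missing ingredient. By Proposition~\ref{Proposition:GOSqRA}, since $\K$ and $\Lalg$ are qRAs (every DqRA being a qRA) and $1_\K$ is totally irreducible, their nested sum $\K[\Lalg]$ is already a qRA. Recalling that a DqRA is precisely a qRA whose lattice reduct is distributive, the entire content of the theorem beyond Proposition~\ref{Proposition:GOSqRA} is to show that $\langle K[L],\wedge_{\K[\Lalg]},\vee_{\K[\Lalg]}\rangle$ is distributive. This is exactly where the conicity of $\K$ is used; without it the argument below breaks.

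First I would use conicity to split the non-identity part of $\K$ as $K\setminus\{1_\K\}=K^{<}\cup K^{>}$, where $K^{<}=\{k\in K:k<_\K 1_\K\}$ and $K^{>}=\{k\in K:k>_\K 1_\K\}$; since $\K$ is conic, every $k\neq 1_\K$ falls into exactly one of these. Reading off Lemma~\ref{Lemma:K[L]ordering}, the order on $\K[\Lalg]$ is then the ordinal sum $K^{<}\oplus L\oplus K^{>}$: every element of $K^{<}$ lies below every element of $L$, which in turn lies below every element of $K^{>}$, while the internal orders of the three blocks are inherited from $\K$, $\Lalg$ and $\K$ respectively. In particular, any two elements drawn from distinct blocks are comparable.

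The main step — and the one I expect to be the real obstacle — is to check that $K^{<}$ and $K^{>}$ are \emph{sublattices} of $\K$. For $m_0,m_1\in K^{<}$ the element $1_\K$ is an upper bound in $\K$, so $m_0\vee_\K m_1\leqslant_\K 1_\K$; total irreducibility of $1_\K$ with respect to $\vee_\K$ forbids $m_0\vee_\K m_1=1_\K$, forcing $m_0\vee_\K m_1<_\K 1_\K$, i.e.\ it remains in $K^{<}$, while $m_0\wedge_\K m_1\leqslant_\K m_0<_\K 1_\K$ trivially stays in $K^{<}$. The dual argument, using $1_\K$ as a lower bound and irreducibility with respect to $\wedge_\K$, shows that $K^{>}$ is closed under $\wedge_\K$ and $\vee_\K$. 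By the first clauses of the definitions of $\wedge_{\K[\Lalg]}$ and $\vee_{\K[\Lalg]}$, these operations agree with $\wedge_\K,\vee_\K$ on each block, so $K^{<}$ and $K^{>}$ are sublattices of the distributive lattice $\K$, hence distributive; and $L$ is distributive since $\Lalg$ is a DqRA.

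Finally I would conclude via the forbidden-sublattice criterion. Since $\K[\Lalg]$ is already a lattice, it suffices to exclude copies of $M_3$ and $N_5$. In each of these the non-extremal elements form a connected incomparability graph (all three are pairwise incomparable in $M_3$; in $N_5$ the element on the short side is incomparable to both elements on the long side). As elements in distinct blocks are comparable, any incomparable pair lies in a common block, so connectedness forces all the non-extremal elements into a single block $P\in\{K^{<},L,K^{>}\}$. Because $P$ is a sublattice, the top and bottom of the configuration — being the join and meet of those elements — also lie in $P$, placing the whole copy of $M_3$ or $N_5$ inside $P$ and contradicting the distributivity of $P$. Hence $\K[\Lalg]$ is distributive, and together with Proposition~\ref{Proposition:GOSqRA} this shows that $\K[\Lalg]$ is a DqRA.
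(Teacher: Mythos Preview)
Your proposal is correct and follows essentially the same approach as the paper: reduce to distributivity via Proposition~\ref{Proposition:GOSqRA}, split $K\setminus\{1_\K\}$ into the strictly-below and strictly-above parts using conicity, verify these are sublattices via the irreducibility of $1_\K$, and then exclude $M_3$ and $N_5$ by arguing that incomparable elements must lie in a common block. Your use of the connected incomparability graph to handle $M_3$ and $N_5$ uniformly is a slight streamlining of the paper's case-by-case treatment, but the substance is the same.
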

\begin{proof}
It follows from Proposition~\ref{Proposition:GOSqRA} that
$\K[\Lalg]$ is a qRA so we need only confirm that the underlying lattice is
distributive.
Since $1_\mathbf{K}$ is both join- and meet-irreducible, the set $A=\{\,m \in K \mid m >1_\mathbf{K}\,\}$ forms a distributive sublattice of $\mathbf{K[L]}$, as does $B=\{\, m \in K \mid m < 1_\mathbf{K}\,\}$. For $a \in A$, $b \in B$ and $\ell \in L$ we always have $b < \ell < a$.
Now suppose that $P \subseteq K[L]$ forms a sublattice isomorphic to $\mathbf{M}_3$ or $\mathbf{N}_5$. 

%ac2611
If the lattice formed by $P$ is isomorphic to $\mathbf{M}_3$, and $p$ and $q$ are two incomparable join-irreducible elements of $P$ such that $p\in A$ and $q \in B$, then $q <p$ by the above. This contradicts $p$ and $q$ being incomparable. Similarly we get a contradiction if $p$ and $q$ are members of two different sets among $A$, $B$ and $L$. Hence the three incomparable join-irreducibles 
of $P$ 
must all be in one of $A$ or $B$ or $L$. But by the definition of $\vee_{\koflbf}$ and $\wedge_{\koflbf}$, this would lead to $P \subseteq A$, $P \subseteq B$ or $P \subseteq L$, contradicting the distributivity of $\K$ and $\Lalg$.  

If the lattice formed by $P$ is isomorphic to $\mathbf{N}_5$, consider the three join-irreducibles $n_1$ unrelated to $n_2$, and $n_2<n_3$. As above, $n_1$ and $n_2$ must both be in one of $A$ or $B$ or $L$. Then we get $n_1 \vee_{\koflbf} n_2$  in the same set, and since $n_2 < n_3 < n_1\vee_{\koflbf} n_2$, we have $n_3$ in the same set as well. As $A,B$ and $L$ are closed under $\wedge_{\koflbf}$ we get $P$ a subset of $A$, $B$ or $L$. Again this contradicts the distributivity of $\K$ and $\Lalg$. 
\end{proof}

The following proposition is a partial converse of Theorem~\ref{thm:ConicKDqRA} above. \\

\begin{proposition}\label{prop:NotDqRA}
Let $\K$ and $\Lalg$ be qRAs such that $|L|>1$ and $1_\K$ is totally irreducible.
%and $\K$ satisfies {\upshape(\ref{k-conditionVersion2})}.
%20251126CR
%If there exists $k\in K$ such that $k\nleqslant_\K 1_\K$ and $1_\K\nleqslant_\K k$, then $\K[\Lalg]$ is non-distributive.
If $\K[\Lalg]$ is distributive, then $\mathbf K$ is conic. 
\end{proposition}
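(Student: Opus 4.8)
The plan is to prove the contrapositive: assuming $\K$ is \emph{not} conic, I would exhibit a five-element sublattice of $\koflbf$ isomorphic to $\mathbf{N}_5$, so that $\koflbf$ cannot be distributive. Since $\K$ is not conic, fix an element $k \in K$ that is incomparable to $1_\K$, i.e.\ $k \not\leqslant_\K 1_\K$ and $k \not\geqslant_\K 1_\K$; note $k \in K\backslash\{1_\K\}$. Set $c = k \wedge_\K 1_\K$ and $d = k \vee_\K 1_\K$. Because $k$ is incomparable to $1_\K$ we have $c <_\K 1_\K <_\K d$, so $c, d \in K\backslash\{1_\K\}$ with $c,d,k$ pairwise distinct, and by Lemma~\ref{Lemma:K[L]ordering} these relations transfer to give $c <_{\koflbf} k <_{\koflbf} d$ in the nested sum. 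Moreover $c \in B$ and $d \in A$ in the notation of the proof of Theorem~\ref{thm:ConicKDqRA}, so $c <_{\koflbf} \ell <_{\koflbf} d$ for every $\ell \in L$.

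The key observation I would record is that $k$ \emph{collapses} onto fixed elements when combined with anything in $L$. By Lemma~\ref{Lemma:K[L]ordering}, the condition $k \not\leqslant_\K 1_\K$ rules out $k \leqslant_{\koflbf} \ell$ and $k \not\geqslant_\K 1_\K$ rules out $\ell \leqslant_{\koflbf} k$, so $k$ is incomparable in $\koflbf$ to \emph{every} $\ell \in L$. Reading off the mixed cases of \eqref{MeetDefinition} and \eqref{JoinDef}, which apply precisely because $k \not\geqslant_\K 1_\K$ and $k \not\leqslant_\K 1_\K$, gives $k \wedge_{\koflbf} \ell = k \wedge_\K 1_\K = c$ and $k \vee_{\koflbf} \ell = k \vee_\K 1_\K = d$ for all $\ell \in L$, independently of $\ell$.

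Now I would use $|L| > 1$: choosing distinct $x, y \in L$ and setting $p = x \wedge_\Lalg y$ and $q = x \vee_\Lalg y$ yields $p <_\Lalg q$, hence $p <_{\koflbf} q$. I claim $\{c, p, q, k, d\}$ is a sublattice isomorphic to $\mathbf{N}_5$: here $c$ is the bottom and $d$ the top, $p <_{\koflbf} q$ is the two-step chain, and $k$ sits on the side with $c <_{\koflbf} k <_{\koflbf} d$ and $k$ incomparable to both $p$ and $q$. Closure is immediate from the collapse above, since $k \vee_{\koflbf} p = k \vee_{\koflbf} q = d$ and $k \wedge_{\koflbf} p = k \wedge_{\koflbf} q = c$, while every other pair among the five is comparable. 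This is exactly the pentagon, contradicting the distributivity of $\koflbf$.

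The only points needing care, and the place where each hypothesis is spent, are the bookkeeping checks that the five elements are genuinely distinct and that the \emph{mixed} clauses of \eqref{MeetDefinition}/\eqref{JoinDef} apply rather than the $K$-internal ones: incomparability of $k$ with $1_\K$ simultaneously forces the mixed clauses and guarantees $c <_\K 1_\K <_\K d$ (hence three distinct elements of $K\backslash\{1_\K\}$), while $|L|>1$ supplies the distinct comparable pair $p <_{\koflbf} q$. I expect no genuine obstacle beyond this verification; in particular, note that distributivity of $\Lalg$ is never used, matching the statement that $\K$ and $\Lalg$ are only assumed to be qRAs.
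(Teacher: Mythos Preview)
Your proposal is correct and follows essentially the same route as the paper: both assume $\K$ is not conic, pick $k$ incomparable to $1_\K$, set the bottom and top of the pentagon to be $k\wedge_\K 1_\K$ and $k\vee_\K 1_\K$, and use $|L|>1$ to supply a strictly comparable pair in $L$ for the two-step chain, exploiting the mixed clauses of \eqref{MeetDefinition}/\eqref{JoinDef} to show the five elements form an $\mathbf{N}_5$ sublattice. The only cosmetic difference is that the paper simply asserts the existence of $\ell_0<_\Lalg\ell_1$ in $L$, whereas you explicitly manufacture them as the meet and join of two distinct elements.
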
	

\begin{proof}
If $\mathbf K$ is not conic then 
there exists some $k\in K$ such that $k\nleqslant_\K 1_\K$ and
	$1_\K\nleqslant_\K k$.  Set $m_0=k\wedge_\K 1_\K$ and $m_1=k\vee_\K 1_\K$.
	Hence $m_0,m_1\neq 1_\K$ and $m_0,m_1\neq k$, and $\{k,m_0,m_1,1_\K\}$
	forms a sublattice of $\langle K,\wedge_\K,\vee_\K\rangle$.  Since $L$ is a lattice and $|L|>1$, there
	exist $\ell_0,\ell_1\in L$ such that $\ell_0 <_\Lalg \ell_1$.  
	Clearly $\ell_0\wedge_{\K[\Lalg]} \ell_1=\ell_0$ and  $\ell_0\vee_{\K[\Lalg]} \ell_1=\ell_1$.
	Also, $m_1\wedge_{\K[\Lalg]}\ell_i=\ell_i$ and $m_0\vee_{\K[\Lalg]} \ell_i=\ell_i$ for 
	$i\in\{0,1\}$. Moreover,
	for $d_0\in\{k,m_0\}$ and $ d_1\in\{k,m_1\}$,
	\[d_0\wedge_{\K[\Lalg]} \ell_0=d_0\wedge_\K 1_\K=m_0\qquad\text{ and }\qquad
	d_0\wedge_{\K[\Lalg]} \ell_1=d_0\wedge_\K 1_\K=m_0\]
	\[d_1\vee_{\K[\Lalg]} \ell_0=d_1\vee_\K 1_\K=m_1\qquad\text{ and }\qquad
	d_1\vee_{\K[\Lalg]} \ell_1=d_1\vee_\K 1_\K=m_1.\]
 \vskip 0.4cm 
\noindent	Thus, $\{k,m_0,m_1,\ell_0,\ell_1\}$ forms  a sublattice of the underlying lattice of $\K[\Lalg]$ 
isomorphic to $\mathbf{N}_5$.
\end{proof}

\begin{example}
Let $\K_2$ be the four-element algebra depicted in
%part (i) of  
Figure~\ref{Figure:GOS2} 
{\upshape(}also see the second diamond in
Figure~\ref{fig:qRAs-examples}{\upshape)}. 
%with $K_2=\{c_0,c_1,c_2,c_3\}$
%such that $c_0\leqslant c$ and $c\leqslant c_1$ for all $c\in K$, $c_2\nleqslant c_3$, $c_3\nleqslant c_2$  with $0_K=1_K=c_3$.  
Then $1_{\mathbf{K}_2}$ is totally irreducible.
%and $\K_2$ satisfies {\upshape (\ref{k-conditionVersion2})}. 
Let $\Lalg_2$ be the two-element Sugihara chain $\mathbf{S}_2$ % with underlying set $S_2=\{a_0,a_1\}$ 
{\upshape(}also depicted in Figure~\ref{Figure:GOS2}{\upshape)}.
Since $\K_2$ and $\Lalg_2$ are both qRAs, it follows from Proposition~\ref{Proposition:GOSqRA}
that $\K_2[\Lalg_2]$ is again a qRA. However, 
since $a\nleqslant 1_{\K_2}$ and $1_{\K_2}\nleqslant a$ it follows from Proposition~\ref{prop:NotDqRA} that $\K_2[\Lalg_2]$ is not
a DqRA.  Indeed, $\K_2[\Lalg_2]$
{\upshape(}also depicted in Figure~\ref{Figure:GOS2}{\upshape)}
has its lattice reduct isomorphic to
$\mathbf{N}_5$. \\%Part (iii) in Figure~\ref{Figure:GOS2} depicts %$\K_2$, $\Lalg_2$ and 
%$\K_2[\Lalg_2]$ is also depicted in Figure~\ref{Figure:GOS2}. \\
\end{example}
%in parts (i), (ii) and (iii), respectively.

%acdiag: want to play with sizing
\begin{figure}[ht]
\begin{tikzpicture}[scale=0.8]
\begin{scope}
%odd diamond
% Elements
\node[draw,circle,inner sep=1.5pt,fill] (bot) at (1,0) {};
\node[draw,circle,inner sep=1.5pt,fill] (top) at (1,2) {};
\node[draw,circle,inner sep=1.5pt] (c2) at (0,1) {};
\node[draw,circle,inner sep=1.5pt,fill] (c3) at (2,1) {};
% Order
\draw[order] (bot)--(c2);
\draw[order] (bot)--(c3);
\draw[order] (c2)--(top);
\draw[order] (c3)--(top);
% Labels
\node[label,anchor=south] at (top) {$\top$};
\node[label,anchor=north] at (bot) {$\bot=a^2$}; 
\node[label,anchor=east,xshift=1pt] at (c2) {$a= a\cdot\top$}; 
\node[label,anchor=west,xshift=-1pt] at (c3) {$1_{\mathbf{K}_2}$}; 
\node[label,anchor=north,yshift=-20pt] at (bot) {$\K_2$};
\end{scope}
%S2
\begin{scope}[xshift=6.2cm]
% Elements
\node[draw,circle,inner sep=1.5pt,fill] (bot) at (0,0.5) {};
\node[draw,circle,inner sep=1.5pt,fill] (top) at (0,1.5) {};
% Order
\draw[order] (bot)--(top);
% Labels
\node[label,anchor=east,xshift=1pt] at (top) {$1_{\mathbf{L}_2}$};				
\node[label,anchor=east,xshift=1pt] at (bot) {$0$};
\node[label,anchor=north,yshift=-20pt] at (bot) {$\Lalg_2$};
\end{scope}	

%Non-distributive K[L]
\begin{scope}[xshift=9.2cm]
%Elements
\node[draw,circle,inner sep=1.5pt,fill] (c0) at (1,0) {};
\node[draw,circle,inner sep=1.5pt,fill] (c1) at (1,3) {};
\node[draw,circle,inner sep=1.5pt,fill] (a0) at (2,1) {};
\node[draw,circle,inner sep=1.5pt,fill] (a1) at (2,2) {};
\node[draw,circle,inner sep=1.5pt] (c2) at (0,1.5) {};
% Order
\draw[order] (c0)--(c2);
\draw[order] (c0)--(a0);
\draw[order] (c2)--(c1);
\draw[order] (a0)--(a1);
\draw[order] (a1)--(c1);
% Labels
\node[label,anchor=east,xshift=1pt] at (c0) {$\bot$};
\node[label,anchor=east,xshift=1pt] at (c1) {$\top$};
\node[label,anchor=east,xshift=1pt] at (c2) {$a$};
\node[label,anchor=west,xshift=-1pt] at (a0) {$0$};
\node[label,anchor=west,xshift=-1pt] at (a1) {$1_{\mathbf{K}_2[\mathbf{L}_2]}$};
\node[label,anchor=north,yshift=-20pt] at (c0) {$\K_2[\Lalg_2]$};
\end{scope}
\end{tikzpicture}
\caption{DqRAs $\K_2$, $\Lalg_2$ and their non-distributive nested sum $\K_2[\Lalg_2]$.}
\label{Figure:GOS2}
\end{figure}

%ac2611
%Recall that a DqRA $\K$ is called {\em conic} if every $a\in K$ satisfies $a\leqslant_\K 1$ or $1\leqslant_\K a$.\\

We note that if $\mathbf{S}_n$ is a finite odd Sugihara chain,
then $\mathbf{S}_n$ can be viewed as an odd DqRA such that its 
monoid identity $1(=a_0)$ is totally irreducible.
%and that satisfies~(\ref{k-conditionVersion2}).  
Moreover, since $\mathbf{S}_n$
is conic, it follows from Theorem~\ref{thm:ConicKDqRA} that
if $\mathbf{L}$ is a DqRA, then the nested sum
$\mathbf{S}_n[\mathbf{L}]$ of $\mathbf{S}_n$ and $\mathbf{L}$
is a DqRA.\\

\begin{proposition}\label{prop:K=Sn+L=Sm}
Let $\mathbf{K}=\mathbf{S}_n$ for $n$ odd, and $\mathbf{L}=\mathbf{S}_m$ for $m \geqslant 2$, then $\mathbf{K}[\mathbf{L}]\cong \mathbf{S}_{n+m-1}$. \\
\end{proposition}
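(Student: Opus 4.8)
The plan is to construct an explicit reindexing $\varphi\colon\mathbf{K}[\mathbf{L}]\to\mathbf{S}_{n+m-1}$ and to verify directly that it is a DqRA-isomorphism. First I would fix notation: writing $n=2k+1$, the chain $\K=\mathbf{S}_n$ has elements $a_{-k}<\dots<a_{-1}<a_0<a_1<\dots<a_k$ with $1_\K=a_0$. Since $n$ is odd, $a_0$ is totally irreducible and $\K$ is odd, so Theorem~\ref{thm:ConicKDqRA} already guarantees that $\mathbf{K}[\mathbf{L}]$ is a DqRA; it remains only to identify it. Deleting $1_\K$ splits $K$ into a lower part $B=\{a_{-1},\dots,a_{-k}\}$ and an upper part $A=\{a_1,\dots,a_k\}$ of equal size $k$, and Lemma~\ref{Lemma:K[L]ordering} gives $b<\ell<a$ for all $b\in B$, $\ell\in L$, $a\in A$. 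Thus the order of $\mathbf{K}[\mathbf{L}]$ is the chain obtained by stacking $B$, then $\Lalg=\mathbf{S}_m$, then $A$, and it has $(n-1)+m=n+m-1$ elements. Writing $\mathbf{S}_m$ with its central indexing $(b_j)$ and $\mathbf{S}_{n+m-1}$ with central indexing $(c_t)$, and setting $l=\lfloor m/2\rfloor$, I would define $\varphi$ by $b_j\mapsto c_j$, $a_i\mapsto c_{l+i}$ and $a_{-i}\mapsto c_{-(l+i)}$ for $i>0$. By construction $\varphi$ is an order isomorphism: the $\Lalg$-block keeps its indices (those of absolute value $\le l$) while $A$ and $B$ are pushed outward to fill exactly the indices of absolute value $>l$; and since the parity of $n+m-1$ equals that of $m$, the central index $0$ is present on one side iff it is present on the other.

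With the order settled I would dispatch the unary operations and constants quickly. By Lemma~\ref{Lemma:NegationBehaviour}, ${\sim}_{\mathbf{K}[\mathbf{L}]}$ restricts to ${\sim}_\Lalg$ on $L$ and to ${\sim}_\K$ on $K\setminus\{1_\K\}$; both act as index-negation $x_j\mapsto x_{-j}$, and $\varphi$ reverses indices symmetrically about the centre of $\Lalg$, so $\varphi({\sim}_{\mathbf{K}[\mathbf{L}]}x)={\sim}_{\mathbf{S}_{n+m-1}}\varphi(x)$ follows case by case. Because $\K$ and $\Lalg$ are Sugihara chains, ${-}$, ${\sim}$ and $\neg$ coincide on each block, and hence on $\mathbf{K}[\mathbf{L}]$ by Lemma~\ref{Lemma:NegationBehaviour} and the definition of $\neg_{\mathbf{K}[\mathbf{L}]}$, so it suffices to treat ${\sim}$. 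Finally $1_{\mathbf{K}[\mathbf{L}]}=1_\Lalg$ is sent to the identity of $\mathbf{S}_{n+m-1}$ (index $0$ when $m$ is odd, index $1$ when $m$ is even), after which $0={\sim}1$ is preserved automatically; since the DqRA signature contains no residuals, all of its operations are then accounted for once $\cdot$ is handled.

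The genuinely delicate step, and the main obstacle, is the monoid operation, which I would split using (\ref{OperationDef}) according to whether $m_0,m_1$ both lie in $L$, both lie in $K\setminus\{1_\K\}$, or lie in different blocks. The key observation is that $\varphi$ raises the distance-from-centre $|\mathrm{index}|$ of every $K$-element by $l$ while preserving that of every $L$-element; consequently each $K$-element is strictly farther from the centre than every $L$-element, and the shift is monotone on each block. In the all-$L$ and all-$K$ cases the Sugihara rule of $\mathbf{S}_{n+m-1}$ compares the same (shifted, hence order-equivalent) distances and breaks ties by the order, which $\varphi$ respects, so the products match. In the mixed case (\ref{OperationDef}) multiplies the $K$-element by $1_\K$ and returns that $K$-element unchanged, while on the image side the $K$-element wins the Sugihara comparison precisely because it has the strictly larger distance from the centre — giving the same value. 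Assembling the three cases shows $\varphi$ preserves $\cdot$, and together with the order, ${\sim}$ and the constants this yields the isomorphism $\mathbf{K}[\mathbf{L}]\cong\mathbf{S}_{n+m-1}$. Throughout, the only bookkeeping requiring care is the two parities of $m$ (the presence or absence of a central index $0$), which is handled uniformly by indexing both chains symmetrically about their centres.
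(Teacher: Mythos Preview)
Your proposal is correct and follows essentially the same approach as the paper: the paper defines the very same reindexing map (sending $a^\Lalg_i\mapsto a_i$ and $a^\K_{\pm j}\mapsto a_{\pm(j+\ell)}$) and then simply declares it ``straightforward'' to verify that this is an isomorphism. You have supplied more of the verification than the paper does, but the construction and strategy are identical.
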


\begin{proof}
Let $\K = \mathbf{S}_{n}$ be a Sugihara chain with $n = 2k+1$ for $k\geqslant 1$, and let $\Lalg = \mathbf{S}_{m}$ with $m = 2\ell$ or $m = 2\ell + 1$ for $\ell \geqslant 1$. Elements of $S_n$ will be denoted by $a_{-k}^\K, \ldots, a_{k}^\K$, elements of $S_m$ by $a^\Lalg_{-\ell}, \ldots, a^\Lalg_{\ell}$, and elements of $S_{n+m-1}$ by $a_{-k-\ell}, \ldots, a_{k+\ell}$. Define a map $\psi: K[L] \to S_{n+m-1}$ by setting, for each $b\in K[L]$,
\[
\psi(b)=
\begin{cases}
a_{j-\ell} & \text{ if } b = a^\K_{j} \text{ and } -k \leqslant j \leqslant -1\\
a_{j +\ell} & \text{ if } b = a^\K_{j} \text{ and } 1 \leqslant j \leqslant k\\
a_i & \text{ if } b = a^\Lalg_i.
\end{cases}
\]
It is straightforward to show that $\psi$ is an isomorphism from $\K[\Lalg]$ to $\mathbf{S}_{n+m-1}$.
\end{proof}

\vskip 0.4cm 

There is evidence to suggest that a 
conic DqRA with its monoid identity $1$ totally irreducible will always be a chain, and further will always be a Sugihara chain. We have not however been able to prove this. When searching for conic DqRAs with $1$ totally irreducible, Prover9/Mace4 returns only finite Sugihara chains for models of 
%cardinality 
size
less than 20. \\

\begin{problem}\label{prob:K=Snalways?}
Let $\mathbf{K}$ be a DqRA 
such that $1_\mathbf{K}$ is totally irreducible.
%and such that $\mathbf{K}$
%satisfies {\upshape (\ref{k-conditionVersion2})}. 
If $\mathbf{K}$ is conic, is it always linearly ordered? 
If $\mathbf{K}$ is finite and conic, is it linearly ordered? 
When $\mathbf{K}$ is finite, is it always isomorphic to $\mathbf{S}_n$ for some odd $n$? %For finite $\mathbf{K}$, if it is not isomorphic to $\mathbf{S}_n$ for any odd $n$, is $\mathbf{K}$ linearly ordered?  
\end{problem}

\section{Representable distributive quasi relation algebras}\label{sec:RDqRA}

In this section we recall the construction of distributive quasi relation algebras from partially ordered equivalence relations. This recent work by two of the current authors~\cite{CR-RDqRA} included a definition of \emph{representability} of a DqRA. The basic idea is to replicate the way in which a relation algebra can be concretely constructed as an algebra of binary relations, starting from an equivalence relation.  In the setting of DqRAs, the algebras are constructed as up-sets of a partially ordered equivalence relation (with the partial order satisfying some additional symmetry conditions). 

As observed in~\cite{CR-RDqRA}, the lattice structure in the construction is the same as that used by Galatos and Jipsen~\cite{GJ20-AU, GJ20-ramics}, and  by Jipsen and \v{S}emrl~\cite{JS23}. An essential difference is 
the additional requirement of the existence of the maps $\alpha$ and $\beta$ (see below). 

Before describing the construction from~\cite{CR-RDqRA}, we recall some basic facts about binary relations. 
For 
$R \subseteq X^2$ 
%ac 
%$R$ a binary relation, 
its converse is 
%the binary relation
$R^\smile = \left\{\left(x, y\right) \mid \left(y, x\right) \in R\right\}$. For two binary relations $R$ and $S$ the composition, $R \mathbin{;} S$ is given by 
$R \mathbin{;} S = \left\{\left(x, y\right) \mid \left(\exists z \in X\right)\left(\left(x, z\right) \in R \textnormal{ and } \left(z, y\right) \in S\right)\right\}$.
When considering a binary relation, $R$, it will usually be contained in an equivalence relation $E$ and therefore
the complement of $R$, denoted  $R^c$, will mean $R^c=\{\, (x,y) \in E \mid (x,y) \notin R\,\}$. If an equivalence relation is not specified, it should be taken as $X^2$. 
%\marginpar{\small AC:check defn idX}
%We also define $R^0 = \text{id}_X$ and $R^{n+1} = R^n \circ R$ for $n \in \omega$.
%For a function $\gamma: X \to X$ we will use  $\gamma$ to denote either the function or the binary relation that is its graph. \\
% Let $\gamma: X \to X$ be a function and 
% $n \in\omega$. 
% Then we define 
% $\gamma^0=\text{id}_X$. 
% and $\gamma^{n+1} = \gamma^n\circ \gamma$. The \emph{order} of a function $\gamma$: $X \to X$, denoted $\left|\gamma\right|$, is the smallest positive integer $n$ such that $\gamma^n = \text{id}_X$. 

We define $\mathrm{id}_X=\{\,(x,x)\mid x \in X\,\}$.
The following familiar equivalences will be used frequently throughout the paper. 
If $E$ is an equivalence relation on $X$ and $R,S,T \subseteq E$ then we have $(R^{\smile})^{\smile}=R$, $(R^{\smile})^c=(R^c)^{\smile}$, and $\mathrm{id}_X\mathbin{;} R = R\mathbin{;} \mathrm{id}_X = R$. Further, $\left(R\, ; S\right)\mathbin{;} T = R\mathbin{;} \left(S\mathbin{;} T\right)$ and 
$\left(R\mathbin{;} S\right)^\smile = S^\smile\mathbin{;} R^\smile$.

%We are interested in algebras whose universes are binary relations. The assumption that $E$ is an equivalence relation together with the lemma below will ensure that if $R, S \subseteq E$, then $R\, ; S\subseteq E$ and $R^\smile \subseteq E$.  \\

%\begin{lemma}
%[{\cite[Lemma 3.2]{CR-RDqRA}}]
%\label{lem:comp_and_conv_in_E}
%Let $R, S, T\subseteq X^2$. 
%\begin{enumerate}[label=(\roman*)]
%\item If $T$ is transitive and $R, S \subseteq T$, then $R\, ; S \subseteq T$.
%\item If $T$ is symmetric and $R \subseteq T$, then $R^{\smile} \subseteq T$. 	\\
%\end{enumerate}	
%\end{lemma}

%\begin{lemma}
%[{\cite[Lemma 3.3]{CR-RDqRA}}]
%\label{lem:(gammaR)capE=gamma(RcapE)}
%Let $X$ be a set and $T$ a symmetric and transitive relation on $X$. If $S$ is a binary relation on $X$ such that $S \subseteq T$, then the following hold for all $R\subseteq T$:
%\begin{enumerate}[label=(\roman*)]
%\item $\left(S\, ; R\right) \cap T = S\, ;\left(R \cap T\right)$
%\item $\left(R\, ; S\right) \cap T = \left(R \cap T\right)\,;S$\\
%\end{enumerate}		
%\end{lemma}

For a function $\gamma: X \to X$ we will use  $\gamma$ to denote either the function or the binary relation that is its graph. 
Note that the lemma below applies when $\gamma$ is a bijective function $\gamma : X \to X$. Also recall that $R^c$ is the complement of $R$ in $E$.\\

%\marginpar{\small  AC: change U to E. Add comment re alpha and beta.}

%\marginpar{\small AC: delete iii and iv}
\begin{lemma}
[{\cite[Lemma 3.4]{CR-RDqRA}}]
\label{lem:important_eq_injective_map}
Let $E$ be an equivalence relation on a set $X$, and let $R, S, \gamma \subseteq E$. If 
$\gamma$
%$\gamma\!: X \rightarrow X$ is a
%20240803CR
%injective '
satisfies 
$\gamma^{\smile}\mathbin{;} \gamma = \mathrm{id}_X$ and $\gamma \mathbin{;} \gamma^{\smile}=\mathrm{id}_X$
then the following hold:
\begin{enumerate}[label=(\roman*)]
%\item If $\left|\gamma \right| = n\ge 2$, then $\gamma^{n-k} = \left(\gamma^\smile\right)^{k}$, where $1 \le k \le n-1$.
\item $\left(\gamma\mathbin{;} R\right)^c = \gamma \mathbin{;} R^c$
\item $\left(R\mathbin{;} \gamma\right)^c = R^c \mathbin{;} \gamma$.\\
%\item $\gamma \mathbin{;} \left(R\cap S\right) = \left(\gamma\mathbin{;} R\right) \cap \left(\gamma \mathbin{;} S\right)$
%\item $\left(R \cap S\right) \mathbin{;}\gamma = \left(R \mathbin{;} \gamma\right) \cap \left(S \mathbin{;}\gamma\right)$\\
\end{enumerate}	
\end{lemma}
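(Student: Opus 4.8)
The plan is to reduce both identities to a single fact about how $E$-membership interacts with $\gamma$, prove (i) by a short element-chase, and then obtain (ii) from (i) by a converse argument rather than repeating the calculation. First I would unpack the hypotheses: $\gamma^{\smile}\mathbin{;}\gamma=\mathrm{id}_X$ and $\gamma\mathbin{;}\gamma^{\smile}=\mathrm{id}_X$ together say exactly that $\gamma$ is the graph of a bijection $X\to X$ (totality and single-valuedness from the first equation, injectivity and surjectivity from the second), so that $(x,y)\in\gamma\mathbin{;} T$ iff $(\gamma(x),y)\in T$ for any $T$. Moreover, if $T\subseteq E$ then $\gamma\mathbin{;} T\subseteq E$, since $(x,\gamma(x))\in E$ and $(\gamma(x),y)\in T\subseteq E$ give $(x,y)\in E$; this ensures that the complements in the statement, taken relative to $E$, are applied to subsets of $E$.

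The key step --- and the only place the assumption $\gamma\subseteq E$ is used --- is the equivalence $(x,y)\in E \iff (\gamma(x),y)\in E$, valid for all $x,y\in X$. Indeed, $\gamma\subseteq E$ gives $(x,\gamma(x))\in E$, and the claim then follows from the symmetry and transitivity of $E$. Granting this, (i) is a chain of equivalences: $(x,y)\in(\gamma\mathbin{;} R)^c$ means $(x,y)\in E$ and $(\gamma(x),y)\notin R$; replacing $(x,y)\in E$ by the equivalent condition $(\gamma(x),y)\in E$ turns this into $(\gamma(x),y)\in R^c$, which is exactly $(x,y)\in\gamma\mathbin{;} R^c$.

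For (ii) I would apply (i) to $\gamma^{\smile}$ in place of $\gamma$: this $\gamma^{\smile}$ again satisfies both hypotheses (they merely swap) and lies in $E$ by symmetry of $E$, so (i) gives $(\gamma^{\smile}\mathbin{;} R^{\smile})^c=\gamma^{\smile}\mathbin{;}(R^{\smile})^c$. Taking converses of both sides of (ii) and using the recorded identities $(R\mathbin{;} S)^{\smile}=S^{\smile}\mathbin{;} R^{\smile}$ and $(R^{\smile})^c=(R^c)^{\smile}$, one sees that (ii) is equivalent to precisely this instance of (i), and so follows immediately. The one genuinely delicate point throughout is that complementation is relative to $E$ rather than to $X^2$; the key equivalence of the second paragraph is tailored exactly to move the complement past $\gamma$ while respecting $E$-membership, and once it is isolated the rest is routine bookkeeping.
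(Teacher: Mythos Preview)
Your proof is correct. The paper does not give its own proof of this lemma but cites it from \cite{CR-RDqRA}, so there is nothing to compare against; your element-chase for (i), hinging on the equivalence $(x,y)\in E \iff (\gamma(x),y)\in E$ coming from $\gamma\subseteq E$ and symmetry/transitivity of $E$, is exactly the right idea, and deriving (ii) from (i) via $\gamma^{\smile}$ and the identities $(R\mathbin{;} S)^{\smile}=S^{\smile}\mathbin{;} R^{\smile}$, $(R^{\smile})^c=(R^c)^{\smile}$ is clean and avoids duplication.
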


We now recall from~\cite{CR-RDqRA} how to construct a DqRA of binary relations.  
Consider a poset $\mathbf X = \left(X, \leqslant\right)$ and   $E$ an equivalence relation on $X$ with ${\leqslant} \subseteq E$. The equivalence relation $E$ can be partially ordered for all $(u, v), (x, y) \in E$ as follows: $(u, v) \preccurlyeq (x, y)  \textnormal{ iff } x \leqslant u \textnormal{ and } v\leqslant y$. The structure $\mathbf E = \left(E, \preccurlyeq\right)$ is then a poset,  hence the set of up-sets of $\mathbf E$, denoted $\textsf{Up}\left(\mathbf E\right)$, ordered by inclusion, is a distributive lattice. 

%The next lemma shows closure under certain set-theoretic operations on $\mathsf{Up}(\mathbf{E})$. \\

%\marginpar{\small AC: delete and put esssentials in sentences.}
%\begin{lemma}
%[{\cite[Lemma 3.5]{CR-RDqRA}}]
%\label{lem:important_up-_and_down-sets}
%Let $\mathbf X = \left(X, \le\right)$ be a poset and $E$ an equivalence relation on $X$ such that ${\le} \subseteq E$. Furthermore, let  $\alpha: X \rightarrow X$ be an order automorphism of $\mathbf X$ such that $\alpha \subseteq E$ and $\beta: X \rightarrow X$ a dual order automorphism of $\mathbf X$ such that $\beta \subseteq E$.
%\begin{enumerate}[label=(\roman*)]
%\item If $R, S  \in \mathsf{Up}\left(\mathbf E\right)$, then $R\mathbin{;} S \in \mathsf{Up}\left(\mathbf E\right)$.
%\item If $R, S  \in \mathsf{Down}\left(\mathbf E\right)$, then $R\mathbin{;} S \in \mathsf{Down}\left(\mathbf E\right)$.
%\item $R \in \mathsf{Up}\left(\mathbf E\right)$ iff $R^c \in \mathsf{Down}\left(\mathbf E\right)$.
%\item $R \in \mathsf{Up}\left(\mathbf E\right)$ iff $R^{\smile} \in \mathsf{Down}\left(\mathbf E\right)$.
%\item If $R \in \mathsf{Up}\left(\mathbf E\right)$, then $\alpha\mathbin{;} R \in \mathsf{Up}\left(\mathbf E\right)$ and $R\mathbin{;} \alpha \in \mathsf{Up}\left(\mathbf E\right)$. 
%\item If $R \in \mathsf{Down}\left(\mathbf E\right)$, then $\beta\mathbin{;}R\mathbin{;}\beta \in \mathsf{Up}\left(\mathbf E\right)$.\\
%\end{enumerate}	
%\end{lemma}

If $R,S \in \UpE$, then $R\mathbin{;}S \in \UpE$. The set of downsets of $(E,\preccurlyeq)$ is denoted $\mathsf{Down}(\mathbf{E})$ and is also closed under composition. Importantly, $R \in \UpE$ iff $R^c \in \mathsf{Down}(\mathbf{E})$ iff $R^{\smile} \in \mathsf{Down}(\mathbf{E})$.
We have 
${\leqslant}\in \mathsf{Up}(\mathbf{E})$ and it is the identity with respect to~$\mathbin{;}$ 
on $\UpE$.
%and acts as an identity with respect to $\mathbin{;}$.
Further, the operation of relational composition $\mathbin{;}$ is residuated with  
$R \backslash_{\UpE} S = (R^{\smile}\mathbin{;}S^c)^c$ and 
$R/_{\UpE}S=(R^c\mathbin{;}S^{\smile})^c$.
Hence $\langle \mathsf{Up}(\mathbf{E}), \cap, \cup, \mathbin{;},\backslash_{\UpE},/_{\UpE}, \leqslant \rangle
$ is a distributive residuated lattice.  

The remaining DqRA operations in the construction (see  Theorem~\ref{thm:Dq(E)} below)  require an order automorphism $\alpha : X \to X$ and a self-inverse dual order automorphism $\beta : X \to X$. We note briefly some facts regarding set-theoretic operations on $\mathsf{Up}(\mathbf{E})$. 
Details can be found in~\cite[Lemma 3.5]{CR-RDqRA}.
If $R, S  \in \mathsf{Up}\left(\mathbf E\right)$, then 
$\alpha\mathbin{;} R$, and  $R\mathbin{;} \alpha$ are elements of $\mathsf{Up}\left(\mathbf E\right)$.
If $R  \in \mathsf{Down}\left(\mathbf E\right)$, then $
\beta\mathbin{;}R\mathbin{;}\beta \in \UpE$.

For
$\alpha : X \to X$ an order automorphism,
we let $0= \alpha \mathbin{;} {\leqslant^{c\smile}}$. With this we can define ${\sim}$ and $-$ 
on $\mathsf{Up}(\mathbf{E})$. 
By \cite[Lemma 3.10]{CR-RDqRA} we can in fact define the linear negations \emph{without} using the residuals. We get 
${\sim} R = R^{c\smile}\mathbin{;} \alpha$ and $-R = \alpha \mathbin{;} R^{c\smile}$ for all $R \in \mathsf{Up}\left(\mathbf{E}\right)$. For calculations it is much easier to use these definitions of ${\sim}R$ and $-R$ than those involving the residuals. 

Lastly, $\neg R$ is defined using the dual order automorphism $\beta$, as stated below.\\ 

\begin{theorem}[{\cite[Theorem 3.15]{CR-RDqRA}}]\label{thm:Dq(E)}
Let $\mathbf{X}=\left(X,\leqslant\right)$ be a poset and $E$ an equivalence relation on $X$ such that ${\leqslant} \subseteq E$.  Let $\alpha: X \to X$ be an order automorphism of $\mathbf X$ and $\beta: X \to X$ a self-inverse dual order automorphism of $\mathbf X$ such that $\alpha, \beta \subseteq E$ and $\beta = \alpha \mathbin{;} \beta\mathbin{;} \alpha$. 
%ac2
%Recall that $0= \alpha \mathbin{;} {\leqslant^{c\smile}}$ and set $1={\leqslant}$.
Set $1={\leqslant}$ and $0= \alpha \mathbin{;} {\leqslant^{c\smile}}$. 
%and $0=\alpha \mathbin{;}\leqslant ^{c\smile}$. 
For $R \in \mathsf{Up}(\mathbf E)$, define
${\sim} R = R^{c\smile}\mathbin{;} \alpha$, $-R = \alpha \mathbin{;} R^{c\smile}$, and 
$\neg R =  \alpha\mathbin{;} \beta \mathbin{;} R^c \mathbin{;} \beta$.
Then the algebra $\mathbf{Dq}(\mathbf E) = \left\langle \mathsf{Up}\left(\mathbf{E}\right),\cap, \cup, \mathbin{;}, {\sim}, {-}, {\neg}, 1, 0 \right\rangle$ 
is a distributive quasi relation algebra. If $\alpha$ is the identity, then $\mathbf{Dq}(\mathbf E)$ is a cyclic distributive quasi relation algebra.	\\
\end{theorem}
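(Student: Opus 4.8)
The plan is to take the distributive-residuated-lattice reduct as already in hand — the discussion preceding the theorem establishes that $\mathbin{;}$ is residuated on $\UpE$ with unit $1={\leqslant}$, and that $\UpE$ ordered by inclusion is a distributive lattice — so distributivity of the underlying lattice comes for free and only the negation-type axioms remain. First I would record well-definedness of the three unary maps: for $R\in\UpE$ one has $R^c\in\mathsf{Down}(\mathbf{E})$, whence $\beta\mathbin{;}R^c\mathbin{;}\beta\in\UpE$ and then $\neg R=\alpha\mathbin{;}\beta\mathbin{;}R^c\mathbin{;}\beta\in\UpE$, and similarly ${\sim}R,-R\in\UpE$. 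Throughout I would lean on three structural facts about the graphs of the bijections $\alpha,\beta$: that $\alpha\mathbin{;}\alpha^\smile=\alpha^\smile\mathbin{;}\alpha=\mathrm{id}_X$ and $\beta\mathbin{;}\beta=\mathrm{id}_X$; that composing on either side with such a graph commutes with complementation (Lemma~\ref{lem:important_eq_injective_map}) and distributes over $\cap$; and that the hypothesis $\beta=\alpha\mathbin{;}\beta\mathbin{;}\alpha$ rearranges into the two ``sliding'' rules $\beta\mathbin{;}\alpha^\smile=\alpha\mathbin{;}\beta$ and $\beta\mathbin{;}\alpha\mathbin{;}\beta=\alpha^\smile$, together with $\alpha\mathbin{;}\beta\mathbin{;}\alpha\mathbin{;}\beta=\mathrm{id}_X$.

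Next I would verify the involutive FL-structure. Using the residual-free descriptions ${\sim}R=R^{c\smile}\mathbin{;}\alpha$ and $-R=\alpha\mathbin{;}R^{c\smile}$ (legitimate by \cite[Lemma 3.10]{CR-RDqRA}), a direct computation gives $(-R)^{c\smile}=R\mathbin{;}\alpha^\smile$, hence ${\sim}{-}R=R\mathbin{;}\alpha^\smile\mathbin{;}\alpha=R$; symmetrically ${-}{\sim}R=R$, so \textsf{(In)} holds and the reduct is an InFL-algebra. For the extra involution I would compute $(\neg R)^c=\alpha\mathbin{;}\beta\mathbin{;}R\mathbin{;}\beta$ (pulling the complement through the bijection graphs) and then $\neg\neg R=\alpha\mathbin{;}\beta\mathbin{;}(\alpha\mathbin{;}\beta\mathbin{;}R\mathbin{;}\beta)\mathbin{;}\beta$, which collapses to $R$ via $\alpha\mathbin{;}\beta\mathbin{;}\alpha\mathbin{;}\beta=\mathrm{id}_X$ and $\beta\mathbin{;}\beta=\mathrm{id}_X$; thus $\mathbf{Dq}(\mathbf{E})$ is an InFL$'$-algebra.

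It then remains to check \textsf{(Dm)}, \textsf{(Di)} and \textsf{(Dp)}. \textsf{(Dm)} is quickest: since $(R\cup S)^c=R^c\cap S^c$ and left/right composition with a bijection graph preserves intersections, $\neg(R\cup S)=\neg R\cap\neg S$. For \textsf{(Di)} I would put both sides into closed form, namely $\neg({\sim}R)=\alpha\mathbin{;}\beta\mathbin{;}R^\smile\mathbin{;}\alpha\mathbin{;}\beta$ and $-(\neg R)=\alpha\mathbin{;}\beta\mathbin{;}R^\smile\mathbin{;}\beta\mathbin{;}\alpha^\smile$, and reconcile them with the sliding rule $\beta\mathbin{;}\alpha^\smile=\alpha\mathbin{;}\beta$.

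The real work is \textsf{(Dp)}, and this is the step I expect to be the main obstacle. The strategy is to reduce both $\neg(R\mathbin{;}S)$ and the dual product $\neg R+\neg S$ to the common form $\alpha\mathbin{;}\beta\mathbin{;}(R\mathbin{;}S)^c\mathbin{;}\beta$. Here I would use the form $a+b=-({\sim}b\mathbin{;}{\sim}a)$ of the dual product: substituting the closed form ${\sim}(\neg R)=\beta\mathbin{;}R^\smile\mathbin{;}\beta$ (and likewise for $S$), the central $\beta\mathbin{;}\beta$ cancels and the two inner factors combine as ${\sim}(\neg S)\mathbin{;}{\sim}(\neg R)=\beta\mathbin{;}S^\smile\mathbin{;}R^\smile\mathbin{;}\beta$; the converse rule $S^\smile\mathbin{;}R^\smile=(R\mathbin{;}S)^\smile$ then restores the original order, giving $\beta\mathbin{;}(R\mathbin{;}S)^\smile\mathbin{;}\beta$, and applying $-$ while pulling the complement through the bijection graphs yields $\alpha\mathbin{;}\beta\mathbin{;}(R\mathbin{;}S)^c\mathbin{;}\beta=\neg(R\mathbin{;}S)$. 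The delicate point — and the reason I would reach for the $-({\sim}b\mathbin{;}{\sim}a)$ form rather than ${\sim}(-a\mathbin{;}-b)$ — is precisely that the converse reverses the order of composition, so only the reversed presentation of $+$ lines the factors back up with $R\mathbin{;}S$ rather than $S\mathbin{;}R$; tracking this reversal against the non-commutativity of $\mathbin{;}$ is the one place where genuine care is required. Finally, the cyclic clause is immediate: when $\alpha=\mathrm{id}_X$ the formulas collapse to ${\sim}R=R^{c\smile}=-R$, so $\mathbf{Dq}(\mathbf{E})$ is cyclic.
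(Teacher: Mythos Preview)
The paper does not contain a proof of this theorem: it is quoted verbatim as \cite[Theorem 3.15]{CR-RDqRA} and used as background, so there is no in-paper argument to compare your proposal against. That said, your sketch is sound and tracks exactly the sort of verification one expects: you correctly invoke the residuated-lattice reduct already established before the theorem, use Lemma~\ref{lem:important_eq_injective_map} to push complements through the bijection graphs, and your computations for \textsf{(In)}, $\neg\neg R=R$, \textsf{(Dm)}, \textsf{(Di)} and \textsf{(Dp)} all check out (in particular your choice of the $-({\sim}b\mathbin{;}{\sim}a)$ form of $+$ to handle the order reversal in \textsf{(Dp)} is exactly right).
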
	

An algebra of the form described by Theorem~\ref{thm:Dq(E)} is called an \emph{equivalence distributive quasi relation algebra} and the class of such algebras is  denoted $\mathsf{EDqRA}$. If $E=X^2$, then we refer to the algebra $\mathbf{Dq}(\mathbf{E})$ as a \emph{full distributive quasi relation algebra}, with the class of such algebras denoted by $\mathsf{FDqRA}$. Analogous to the case for relation algebras (cf.~\cite[Chapter 3]{Mad06}), it was shown~\cite[Theorem 4.4]{CR-RDqRA} that 
$\mathbb{IP}(\mathsf{FDqRA})=\mathbb{I}(\mathsf{EDqRA})$. This gives rise to the definition below: \\

\begin{definition}[{\cite[Definition 4.5]{CR-RDqRA}}]\label{def:RDqRA}
A DqRA $\mathbf{A} = \left\langle A, \wedge, \vee, \cdot,  {\sim}, {-}, {\neg}, 1,0 \right\rangle$ 
is \emph{representable} if 
$\mathbf{A} \in \mathbb{ISP}\left(\mathsf{FDqRA}\right)$
or, equivalently, $\mathbf{A} \in \mathbb{IS}\left(\mathsf{EDqRA}\right)$. \\
\end{definition}

We say that a DqRA $\mathbf{A}$ is \emph{finitely} representable if the poset $(  X ,\leqslant )$ used in the representation of $\mathbf{A}$ is finite. 

Below we give some examples of representable DqRAs. These examples will become useful later on. \\

%Next two examples come from Figure~\ref{Figure:GOS1}. 
\begin{example}\label{ex:S2}
    Consider the two-element Sugihara chain $\mathbf{S}_2$ {\upshape(}viewed as a DqRA{\upshape)}.
    Let $\mathbf{X}_{\mathbf{S}_2}$ be the one-element poset
    with $X_{\mathbf{S}_2}=\{u\}$ and $\leqslant_{X_{\mathbf{S}_2}}=
    \alpha=\beta=\textnormal{id}_{X_{\mathbf{S}_2}}$.
    Then the lattice 
    $\mathsf{Up}( (X_{\mathbf{S}_2})^2,\preccurlyeq)$ is the
    two-element chain $\left(\left\{\varnothing,(X_{\mathbf{S}_2})^2\right\},\subseteq\right)$
    that forms a represenation of
    $\mathbf{S}_2$.  Figure~\ref{fig:S2Representation} contains
    a depiction of $\mathbf{X}_{\mathbf{S}_2}$, $\alpha$,
    $\beta$ and the blocks of $(X_{\mathbf{S}_2})^2$ as well as
    the representation of $\mathbf{S}_2$.\\
\end{example}

\begin{figure}[ht]
\centering
\begin{tikzpicture}[scale=1.5,pics/sample/.style={code={\draw[#1] (0,0) --(0.6,0) ;}},
    Dotted/.style={% https://tex.stackexchange.com/a/52856/194703
    dash pattern=on 0.1\pgflinewidth off #1\pgflinewidth,line cap=round,
    shorten >=#1\pgflinewidth/2,shorten <=#1\pgflinewidth/2},
    Dotted/.default=3]
\begin{scope}[xshift=-0.2cm,box/.style = {draw,dashdotdotted,inner sep=20pt,rounded corners=5pt,thick}]
%xshift=-1.25cm,yshift=-2.5cm]
%Elements
\node[draw,circle,inner sep=1.5pt] (u) at (-2.5,0.7) {};
%alpha
\draw [->,dotted,thick] (u) edge[loop right, looseness=40]node{} (u);
\draw [->,dashed, thick] (u) edge[loop above, looseness=40]node{} (u);
\node[label,anchor=north,xshift=-1pt] at (u) {$u$};
\node[box,fit=(u)] {};
\node[label,anchor=north,xshift=8pt,yshift=-25pt] at (u) {$\mathbf{X}_{\mathbf{S}_2}$};
\path (-3.5,1) 
 node[matrix,anchor=north east,draw,nodes={anchor=center},inner sep=2pt, thick]  {
  \pic{sample=dashed}; & \node{$\alpha$}; \\
  \pic{sample=dotted}; & \node{$\beta$}; \\
  \pic{sample=dashdotdotted}; & \node{$E$ blocks}; \\
 };
\end{scope}

\begin{scope}[xshift=0.2cm]
% Elements
\node[draw,circle,inner sep=1.5pt,fill] (0) at (0,0.3) {};
\node[draw,circle,inner sep=1.5pt,fill] (1) at (0,1) {};
% Order
\draw[order] (0)--(1);
% Labels
\node[label,anchor=west] at (1) {$(X_{\mathbf{S}_2})^2$};
\node[label,anchor=west] at (0) {$\varnothing$};
\node[label,anchor=north,yshift=-7pt] at (0) {$\mathbf{S}_2$};
\end{scope}
\end{tikzpicture}
\caption{The poset $\mathbf{X}_{\mathbf{S}_2}$ used to represent $\mathbf{S}_2$ (left) and the
representation of $\mathbf{S}_2$ (right).}
\label{fig:S2Representation}
\end{figure}

\begin{example}[{\cite[Example 5.1]{CR-RDqRA}}]\label{ex:S3}
Recall that $\mathbf{S}_3$
denotes the three-element Sugihara chain  {\upshape(}viewed as a DqRA{\upshape)} with $S_3=\{a_{-1},a_0,a_1\}$ such that $a_0$ is the identity of the monoid operation. Let $\mathbf{X}_{\mathbf{S}_3}$ be the two-element antichain, i.e., $X_{\mathbf{S}_3} = \left\{x, y\right\}$ with ${\leqslant_{X_{\mathbf{S}_3}}} = \textnormal{id}_{X_{\mathbf{S}_3}}$, and consider the order automorphism $\alpha = \left\{\left(x, y\right), \left(y, x\right)\right\}$ 
%cr
and the dual order automorphism $\beta= \textnormal{id}_{X_{\mathbf{S}_3}}$ 
of $\mathbf{X}_{\mathbf{S}_3}$.
See Figure~\ref{fig:S3Representation}
for a depiction of the poset $\mathbf{X}_{\mathbf{S}_3}$, the
order automorphism $\alpha$, the dual
order automorphism $\beta$, and the
blocks of $(X_{\mathbf{S}_3})^2$.
Then the  lattice 
$\mathsf{Up}( (X_{\mathbf{S}_3})^2,\preccurlyeq)$ is the 16-element Boolean lattice. It can
easily be shown that
%ac changed to give both -\leq and ~\leq
$\alpha \mathbin{;}(\leqslant_{X_{\mathbf{S}_3}}^c)^\smile =
(\leqslant_{X_{\mathbf{S}_3}}^c)^\smile \mathbin{;} \alpha
= {\sim}{\leqslant_{X_{\mathbf{S}_3}}} 
=
{\leqslant_{X_{\mathbf{S}_3}}}  $ 
%${\leqslant_{\mathbf{X}_{\mathbf{S}_3}}} = {\sim}{\leqslant_{\mathbf{X}_{\mathbf{S}_3}}}  =  (\leqslant_{\mathbf{X}_{\mathbf{S}_3}}^c)^\smile \mathbin{;} \alpha$ 
and that the set of 
binary 
relations $\{\varnothing, \leqslant_{X_{\mathbf{S}_3}}, (X_{\mathbf{S}_3})^2\}$ forms a subuniverse  of 
%the reduct  
$\langle \mathsf{Up}((X_{\mathbf{S}_3})^2,\preccurlyeq),\cap, \cup, \mathbin{;},  {\sim},{-}, \neg, \leqslant_{X_{\mathbf{S}_3}}, (\alpha \mathbin{;} {\leqslant_{X_{\mathbf{S}_3}}^{c\smile}})\rangle$, such that the subalgebra 
$\langle \{\varnothing, \leqslant_{X_{\mathbf{S}_3}}, (X_{\mathbf{S}_3})^2\} ,\cap, \cup, \mathbin{;},  {\sim},{-}, \neg, \leqslant_{X_{\mathbf{S}_3}}, \leqslant_{X_{\mathbf{S}_3}} \rangle$, 
is a representation of 
$\mathbf{S}_3$
{\upshape(}also depicted in 
Figure~\ref{fig:S3Representation}{\upshape)}.\\
%Define a map $h: S_3 \to \mathsf{Up}(\langle X^2,\preceq\rangle)$ by $h(a_{-1})=\varnothing$, $h(a_0)={\leqslant}$ and $h(a_1)=X^2$. It is easy to check that this is indeed a homomorphism. \\
\end{example}

\begin{figure}[ht]
\centering
\begin{tikzpicture}[scale=1.5,pics/sample/.style={code={\draw[#1] (0,0) --(0.6,0) ;}},
    Dotted/.style={% https://tex.stackexchange.com/a/52856/194703
    dash pattern=on 0.1\pgflinewidth off #1\pgflinewidth,line cap=round,
    shorten >=#1\pgflinewidth/2,shorten <=#1\pgflinewidth/2},
    Dotted/.default=3]
\begin{scope}[ box/.style = {draw,dashdotdotted,inner sep=20pt,rounded corners=5pt,thick}]
%xshift=-1.25cm,
%yshift=-2.5cm]
%Elements
\node[draw,circle,inner sep=1.5pt] (x) at (-2.5,0.7) {};
\node[draw,circle,inner sep=1.5pt] (y) at (-0.5,0.7) {};
%alpha
\path (x) edge [->, bend left=25, dashed, thick] node {} (y);
\path (y) edge [->, bend left=25, dashed, thick] node {} (x);
\draw [->,dotted, thick] (x) edge[loop left]node{} (x);
\draw [->,dotted, thick] (y) edge[loop right]node{} (y);
\node[label,anchor=north,xshift=-1pt] at (x) {$x$};
\node[label,anchor=north,xshift=-1pt] at (y) {$y$};
\path (-0.7,2.5) 
 node[matrix,anchor=north east,draw,nodes={anchor=center},inner sep=2pt, thick]  {
  \pic{sample=dashed}; & \node{$\alpha$}; \\
  \pic{sample=dotted}; & \node{$\beta$}; \\
  \pic{sample=dashdotdotted}; & \node{$E$ blocks}; \\
 };
 \node[box,fit=(x)(y)] {};
\node[label,anchor=north,xshift=40pt,yshift=-25pt] at (x) {$\mathbf{X}_{\mathbf{S}_3}$};
\end{scope}

%ac scale made smaller
\begin{scope}[xshift=4.3cm,scale=0.7]
% Elements
\node[draw,circle,inner sep=1.5pt,fill] (bot) at (0,0) {};
\node[draw,circle,inner sep=1.5pt,fill] (id) at (0,1) {};
\node[draw,circle,inner sep=1.5pt,fill] (top) at (0,2) {};
% Order
\draw[order] (bot)--(id)--(top);
% Labels
\node[label,anchor=east,xshift=1pt] at (top) {$(X_{\mathbf{S}_3})^2$};
\node[label,anchor=east,xshift=1pt] at (id) {${\leqslant_{X_{\mathbf{S}_3}}} = {\sim}{\leqslant_{X_{\mathbf{S}_3}}}=-{\leqslant_{X_{\mathbf{S}_3}}}= \neg{\leqslant_{X_{\mathbf{S}_3}}}$};
\node[label,anchor=east,xshift=1pt] at (bot) {$\varnothing$};
%\node[label,anchor=north,xshift=0pt,yshift=-10pt] at (bot) {$h\left(\mathbf{S}_3\right)$};
\end{scope}

\end{tikzpicture}
\caption{The poset $\mathbf{X}_{\mathbf{S}_3}$ used to represent $\mathbf{S}_3$ (left) and the
representation of $\mathbf{S}_3$ (right).}
\label{fig:S3Representation}
\end{figure}

\begin{example} \label{Example:Representation-diamond}
Consider the first diamond $\Lalg_1$ given in Figure~\ref{fig:qRAs-examples}. It can be represented
over the two-element antichain $\mathbf{X}_{\Lalg_1}$, 
%cr
i.e., $X_{\Lalg_1} = \{u, v\}$ and ${\leqslant_{X_{\Lalg_1}}} = \textnormal{id}_{X_{\Lalg_1}}$, with $E = \alpha = \beta = \textnormal{id}_{X_{\Lalg_1}}$.
Hence, in this example the algebra $\mathbf{Dq}(\mathbf{E})$ is \emph{not} a full distributive quasi relation algebra. 
The poset $\mathbf{X}_{\Lalg_1}$, the
order automorphism $\alpha$, the dual
order automorphism $\beta$ and the
blocks of $E$ are depicted in Figure~\ref{fig:DiamondRepresentation}.  Clearly the set of
binary relations 
%cr {u,u}, {v, v} --> {(u, u)}, {(v, v)}
$\left\{\varnothing,\{(u,u)\},\{(v,v)\},\leqslant_{X_{\Lalg_1}}\right\}$ forms
a subuniverse of the algebra
$\langle \mathsf{Up}( E,\preccurlyeq),\cap, \cup, \mathbin{;}, {\sim}, {-}, \neg, \leqslant_{X_{\Lalg_1}}, (\alpha \mathbin{;} {\leqslant_{X_{\Lalg_1}}^{c\smile}})\rangle$.
We
have that the monoid identity $1_{\mathbf{L}_1}$ is mapped to ${\leqslant_{X_{\Lalg_1}}} = E$, $0$ to $\varnothing$, $a$ to $\{(u, u)\}$ and $b$ to $\{(v,v)\}$.
\end{example}

\begin{figure}[ht]
\centering   
\begin{tikzpicture}[scale=1.5,pics/sample/.style={code={\draw[#1] (0,0) --(0.6,0) ;}},
Dotted/.style={% https://tex.stackexchange.com/a/52856/194703
dash pattern=on 0.1\pgflinewidth off #1\pgflinewidth,line cap=round,
shorten >=#1\pgflinewidth/2,shorten <=#1\pgflinewidth/2},
Dotted/.default=3]
\begin{scope}[ box/.style = {draw,dashdotdotted,inner sep=20pt,rounded corners=5pt,thick}]
%xshift=-1.25cm,yshift=-2.5cm]
%Elements
\node[draw,circle,inner sep=1.5pt] (u) at (-2.5,0.7) {};
\node[draw,circle,inner sep=1.5pt] (v) at (-0.5,0.7) {};
%alpha
\draw [->,dashed, thick] (u) edge[loop left]node{} (u);
\draw [->,dashed, thick] (v) edge[loop right]node{} (v);
\node[label,anchor=north,xshift=-1pt] at (u) {$u$};
\node[label,anchor=north,xshift=-1pt] at (v) {$v$};
\path (-0.7,2.5) 
node[matrix,anchor=north east,draw,nodes={anchor=center},inner sep=2pt, thick]  {
\pic{sample=dashed}; & \node{$\alpha,\beta$}; \\
\pic{sample=dashdotdotted}; & \node{$E$ blocks}; \\
};
\node[box,fit=(u)] {};
\node[box,fit=(v)] {};
\node[label,anchor=north,xshift=40pt,yshift=-25pt] at (u) {$\mathbf{X}_{\Lalg_1}$};
\end{scope}	

%ac scale =0.7 for diamond
\begin{scope}[xshift=3.2cm,scale=0.7]
% Elements
\node[draw,circle,inner sep=1.5pt,fill] (0) at (0,0) {};
\node[draw,circle,inner sep=1.5pt,fill] (a) at (-1,1) {};
\node[draw,circle,inner sep=1.5pt,fill] (b) at (1,1) {};
\node[draw,circle,inner sep=1.5pt,fill] (1) at (0,2) {};
% Order
\draw[order] (0)--(a)--(1);
\draw[order] (0)--(b)--(1);
% Labels
\node[label,anchor=south] at (1) {${\leqslant_{X_{\Lalg_1}}}$};
\node[label,anchor=east,xshift=1pt] at (a) {$\left\{\left(u, u\right)\right\}$};
\node[label,anchor=west,xshift=-1pt] at (b) {$\left\{\left(v, v\right)\right\}$};
\node[label,anchor=north] at (0) {$\varnothing$};
%\node[label,anchor=north,xshift=0pt,yshift=-25pt] at (0) {$h(\Lalg_1)$};
\end{scope}

\end{tikzpicture}

\caption{The poset $\mathbf{X}_{\mathbf{L}_1}$ used to represent the diamond
$\Lalg_1$ (left) and the representation of $\mathbf{L}_1$ (right).}
\label{fig:DiamondRepresentation}
\end{figure}

\section{Representable DqRAs via nested sums}\label{sec:KL-rep}

In this section we prove that if $\Lalg$ is a representable distributive quasi relation algebra,
then the nested sum of $\mathbf{S}_3$ and $\Lalg$ is representable.\\
%(see Theorem~\ref{Thm:K[L]represenatble}). \\
%That is, we show that the generalised ordinal sum of $\mathbf{S}_3$ and $\Lalg$ is representable. \\

\begin{theorem}\label{Thm:K[L]represenatble}
%	Let $\mathbf{S}_3$ be the odd Sugihara chain of 
%	length $3$ and 
    Let 
	$\mathbf{L}$ 
	be a {\upshape(}finitely{\upshape)} representable DqRA.
	Then $\mathbf{S}_3\left[\mathbf L\right]$ 
	is a {\upshape(}finitely{\upshape)} representable DqRA. \\
\end{theorem}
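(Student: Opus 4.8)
The plan is to manufacture an explicit relational representation of $\mathbf{S}_3[\mathbf L]$ out of one for $\mathbf L$, imitating the way $\mathbf{S}_3$ itself is represented in Example~\ref{ex:S3}. First I would invoke Definition~\ref{def:RDqRA} to fix a representation of $\mathbf L$, that is, an embedding $h\colon\mathbf L\hookrightarrow\mathbf{Dq}(\mathbf E_Y)$ for some poset $(Y,\leqslant_Y)$, equivalence relation $E_Y$, order automorphism $\alpha_Y$ and self-inverse dual order automorphism $\beta_Y$ as in Theorem~\ref{thm:Dq(E)}; when $\mathbf L$ is finitely representable I take $Y$ finite. Writing $R_\ell:=h(\ell)$, I would first record that $L$ is a subuniverse of $\mathbf{S}_3[\mathbf L]$: by (\ref{OperationDef}), (\ref{MeetDefinition}), (\ref{JoinDef}) and Lemma~\ref{Lemma:NegationBehaviour} the nested-sum operations restrict to the $\mathbf L$-operations on $L$, so $\mathbf L$ already sits inside the algebra to be represented and only the two extra elements $\bot,\top$ (the images of $a_{-1},a_{1}$ of $\mathbf{S}_3$) need to be accommodated.

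Next I would build a structure $(X,\leqslant_X,E_X,\alpha_X,\beta_X)$ meeting the hypotheses of Theorem~\ref{thm:Dq(E)}, using the two-point swap pattern of $\mathbf{S}_3$ as scaffolding: take two disjoint copies $Y_x,Y_y$ of $Y$ with $\leqslant_X$ the disjoint union of the two copies of $\leqslant_Y$, let $\alpha_X$ interchange the copies (composed with $\alpha_Y$) so that, exactly as for $\mathbf{S}_3$, the sign-change $\sim$ will swap the new extremes, and let $\beta_X$ act by $\beta_Y$ on each copy (so that $\beta_X=\alpha_X\mathbin{;}\beta_X\mathbin{;}\alpha_X$ holds). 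The representation $H\colon\mathbf{S}_3[\mathbf L]\to\mathbf{Dq}(\mathbf E_X)$ is then to be defined by $H(\bot)=\varnothing$ and $H(\top)=E_X$ — which forces $1_{\mathbf{S}_3[\mathbf L]}=1_L\mapsto{\leqslant_X}$ and $0_L\mapsto\alpha_X\mathbin{;}{\leqslant_X^{c\smile}}$ — together with $H(\ell)=\widehat R_\ell$ for a relation $\widehat R_\ell$ on $X$ built from $R_\ell$ (and a suitably chosen $E_X$).

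I would then verify that $H$ is a DqRA embedding by a case analysis on whether the arguments lie in $L$ or in $\{\bot,\top\}$. The pure-$L$ cases reduce to $h$ being an embedding; the mixed cases are checked against the nested-sum tables and against the relational identities $\varnothing\mathbin{;}R=\varnothing$, $E_X\mathbin{;}R=E_X$, ${\sim}E_X=\varnothing$, ${\sim}\varnothing=E_X$, ${\neg}E_X=\varnothing$, using Theorem~\ref{thm:Dq(E)}, Lemma~\ref{lem:important_eq_injective_map} and the negation formulas ${\sim}R=R^{c\smile}\mathbin{;}\alpha_X$, ${-}R=\alpha_X\mathbin{;}R^{c\smile}$, ${\neg}R=\alpha_X\mathbin{;}\beta_X\mathbin{;}R^c\mathbin{;}\beta_X$. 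Injectivity is immediate since $h$ is injective and $\bot,\top$ map to the distinct relations $\varnothing,E_X$, and finiteness of $Y$ yields finiteness of $X$; the odd-Sugihara-chain Corollary~\ref{cor:Sn[L]-rep} then follows by iterating the theorem through Proposition~\ref{prop:K=Sn+L=Sm}.

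The hard part will be the design of $\widehat R_\ell$, and with it the precise choice of $E_X$. The operation table forces $\top\cdot\ell=\top$ for every $\ell\in L$, so each $\widehat R_\ell$ must be both left- and right-total within every $E_X$-class (to give $E_X\mathbin{;}\widehat R_\ell=E_X=\widehat R_\ell\mathbin{;}E_X$); simultaneously $\bot<\ell<\top$ forces $\varnothing\subsetneq\widehat R_\ell\subsetneq E_X$, so the bottom of $\mathbf L$ must be \emph{lifted off} $\varnothing$ and the top of $\mathbf L$ kept strictly below $E_X$, even though under $h$ they may well be represented by $\varnothing$ and by $E_Y$. The genuine obstacle is to reconcile these totality and strictness requirements with the simultaneous preservation of all three negations ${\sim},{-},{\neg}$ through $\alpha_X,\beta_X$: a naive ``doubling'' $\widehat R_\ell=R_\ell^{(x)}\cup R_\ell^{(y)}$ collapses $\mathbf L$'s bottom onto $\bot$, whereas filling the cross-copies makes composition blow up and destroys ${\sim}{\leqslant_X}={\leqslant_X}$; hence the cross-components of $\widehat R_\ell$ together with $E_X$ and $\alpha_X$ must be tuned so that $H$ commutes with $\cdot,{\sim},{-},{\neg}$ at once. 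Establishing exactly this compatibility — equivalently, that the engineered subset of $\mathbf{Dq}(\mathbf E_X)$ is closed under all the operations and isomorphic to $\mathbf{S}_3[\mathbf L]$ — is where the real work lies.
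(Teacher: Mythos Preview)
Your high-level strategy---build a larger structure, send $\bot\mapsto\varnothing$, $\top\mapsto E_X$, and lift each $h(\ell)$ to some $\widehat R_\ell$---matches the paper's, and you correctly isolate the real difficulty: arranging that $\varnothing\subsetneq\widehat R_\ell\subsetneq E_X$ with both one-sided totality conditions, while keeping $H$ compatible with $\cdot,{\sim},{-},{\neg}$. But the particular construction you propose does not work, and the obstruction is concrete. With $X=Y_x\sqcup Y_y$ carrying the disjoint-union order and $\alpha_X$ swapping the copies, you must have $H(1_{\mathbf L})={\leqslant_X}$, and ${\leqslant_X}$ has no cross-copy pairs. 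Order preservation then forces $\widehat R_\ell\subseteq{\leqslant_X}$ for every $\ell\leqslant_{\mathbf L}1_{\mathbf L}$, so such $\widehat R_\ell$ live entirely inside the two copies and equal $R_\ell^{(x)}\cup R_\ell^{(y)}$. Take $\mathbf L=\mathbf S_2$ (Example~\ref{ex:S2}): there $h(0_{\mathbf L})=\varnothing$ and $0_{\mathbf L}\leqslant 1_{\mathbf L}$, whence $\widehat R_{0_{\mathbf L}}=\varnothing=H(\bot)$, contradicting injectivity. No amount of tuning the cross-components can rescue this while keeping $H(1_{\mathbf L})={\leqslant_X}$, since the constraint comes from the order, not from the equivalence relation.

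The paper's construction avoids this by \emph{enlarging the order itself} rather than duplicating $Y$. For each $E_{\mathbf L}$-class $[z]$ it adjoins a copy ${}^{[z]}X_{\mathbf K}=\{x^{[z]},y^{[z]}\}$ of the two-element antichain strictly above $[z]$ and another copy ${}_{[z]}X_{\mathbf K}$ strictly below, and takes $E_{\mathbf K[\mathbf L]}$ to have classes $[z]\cup{}^{[z]}X_{\mathbf K}\cup{}_{[z]}X_{\mathbf K}$. Then $\alpha$ acts as $\alpha_{\mathbf L}$ on $Y$ and as the $\mathbf S_3$-swap on each scaffold pair, while $\beta$ acts as $\beta_{\mathbf L}$ on $Y$ and interchanges the upper and lower scaffolds. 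The embedding sends $\ell\in L$ to $\psi(\ell)=R\cup h(\ell)$ where $R={\leqslant_{X_{\mathbf K[\mathbf L]}}}\setminus{\leqslant_{X_{\mathbf L}}}$ is the fixed block of new order pairs created by the scaffolds. This $R$ is nonempty and is automatically left- and right-total in every $E_{\mathbf K[\mathbf L]}$-class (the scaffold points witness totality), so every $\psi(\ell)$ sits strictly between $\varnothing$ and $E_{\mathbf K[\mathbf L]}$ regardless of whether $h(\ell)=\varnothing$. The point you were missing is that the padding must be built into the \emph{order relation} (so that it is contained in $\psi(1_{\mathbf L})={\leqslant}$), not into an $E$-component orthogonal to the order; once that is done, the case analyses for $\wedge,\vee,\cdot,{\sim},{-},{\neg}$ go through (Lemmas~\ref{Lem:PsiUpsets}--\ref{Lem:PresMonoid}).
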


%The general claim then follows as a corollary by repeatedly
%applying the result above.

In order to prove Theorem~\ref{Thm:K[L]represenatble},
we use the representations
of $\mathbf{S}_3$ and $\mathbf{L}$ to construct a suitable
candidate with which to represent $\mathbf{S}_3\left[\mathbf L\right]$.
Through a series of lemmas we then show that the construction
does indeed produce a representation of the DqRA $\mathbf{S}_3\left[\mathbf L\right]$ (as summarized in the
proof at the end of the section).

Let $\mathbf{K}$ be the odd three-element Sugihara chain
described in Section~\ref{sec:Prelim}, i.e.,  
$\mathbf{K}=\langle \{a_{-1},a_0,a_1\},
\wedge,\vee,\cdot,{\sim},{\sim},{\sim}, a_0,a_0\rangle$ {\upshape(}also
see Figure~\ref{fig:Sugihara-examples}{\upshape)}.
Then $a_0$, the monoid identity of $\mathbf{K}$, is totally irreducible and $\mathbf{K}$
is odd, conic
%, satisfies (\ref{k-conditionVersion2})
and representable.  Recall from Example~\ref{ex:S3} that the representation uses the two-element antichain 
$\mathbf X_{\mathbf{K}} = \left(X_{\mathbf{K}}, \leqslant_{X_{\mathbf{K}}}\right)$
with
$X_{\mathbf{K}}=\{x,y\}$ and 
the equivalence relation $E_{\mathbf{K}}=X_{\mathbf{K}}\times X_{\mathbf{K}}$. The order automorphism $\alpha_{\mathbf{K}}:X_{\mathbf{K}}\to X_{\mathbf{K}}$
is defined by $\alpha_\K(x)=y$ and $\alpha_\K(y)=x$, while
the dual order automorphism $\beta_{\K}:X_{\mathbf{K}}\to X_{\mathbf{K}}$ is the identity map and 
$\alpha_{\mathbf{K}}\mathbin{;} \beta_{\mathbf{K}} \mathbin{;} \alpha_{\mathbf{K}}= \beta_{\mathbf{K}}$.
Elements of $X_{\K}$ will be denoted by $w_1,w_2,\ldots$ or just $w$.
Finally, $\mathbf{K}$ can be embedded into the
algebra of up-sets of 
%cr changed E_K to poset
$\left(E_{\mathbf{K}},\preccurlyeq_{E_\K}\right)$ via the embedding
$\varphi_{\mathbf{K}}: %{a_{-1},a_0,a_1} 
\K \hookrightarrow \mathbf{Dq}\left(E_\mathbf{K}, \preccurlyeq_{E_\mathbf{K}}\right)$ such that 
$\varphi_{\mathbf{K}}(a_{-1})=\varnothing$,
$\varphi_{\mathbf{K}}(a_{0})={\leqslant_{X_\mathbf{K}}}$
and
$\varphi_{\mathbf{K}}(a_{1})=E_{\mathbf{K}}=X_{\mathbf{K}}\times X_{\mathbf{K}}$.

%cr new paragraph
Now let $\mathbf{L}$ be any representable DqRA. Then 
there exists a poset $\mathbf X_\mathbf{L} = \left(X_\mathbf{L}, \leqslant_{X_\mathbf{L}}\right)$,
an equivalence relation $E_\mathbf{L}\subseteq X_\mathbf{L}\times X_\mathbf{L}$ with
${\leqslant_{X_\mathbf{L}}}\subseteq E_\mathbf{L}$, an
order automorphism $\alpha_\mathbf{L}:X_\mathbf{L}\to X_\mathbf{L}$
%cr added self-inverse
and a self-inverse dual order automorphism $\beta_\mathbf{L}:X_\mathbf{L}\to X_\mathbf{L}$ such that 
$\alpha_\mathbf{L},\beta_\mathbf{L}\subseteq E_\mathbf{L}$ and
$\alpha_\mathbf{L}\mathbin{;} \beta_\mathbf{L} \mathbin{;} \alpha_\mathbf{L} = \beta_\mathbf{L}$.  Elements of the set $X_{\mathbf{L}}$ will
be denoted by $z_1,z_2,\ldots$ or just $z$.
Moreover, there exists an embedding
$\varphi_\mathbf{L}: \mathbf{L} \hookrightarrow \mathbf{Dq}\left(E_\mathbf{L}, \preccurlyeq_{E_\mathbf{L}}\right)$ (see Section~\ref{sec:RDqRA} for the details).

We first describe the poset used to represent $\mathbf{K}[\mathbf{L}]$. 
For each $[z]\in X_{\Lalg}/E_{\Lalg}$, 
%ac1
%such that $z\in X_{\Lalg}$,
let ${}^{[z]}\mathbf{X}_{\K}$ and ${}_{[z]}\mathbf{X}_{\K}$ each be a
copy of $\mathbf{X}_{\K}$
with ${}^{[z]}\!X_{\K}=\{x^{[z]},y^{[z]}\}$ and ${}_{[z]}\!X_{\K}=\{x_{[z]},y_{[z]}\}$. 
Thus, both ${}^{[z]}\mathbf{X}_{\K}$ and ${}_{[z]}\mathbf{X}_{\K}$
are two-element antichains with their orderings denoted by
${}^{[z]}{\leqslant_{X_{\K}}}$ and ${}_{[z]}{\leqslant_{X_{\K}}}$, respectively.
Now let 
\begin{equation} 
X_{\koflbf}:=X_{\Lalg}
\cup\, 
\bigcup\left\{{}^{[z]}X_{\K} \;\Big\vert \; [z]\in X_{\Lalg}/E_{\Lalg}\right\}
\cup\,
\bigcup\left\{{}_{[z]}X_{\K}\;\Big\vert \; [z]\in X_{\Lalg}/E_{\Lalg}\right\}\label{Def:XKofL}\end{equation}
and we use $u$'s and $v$'s to denote elements of
$X_{\koflbf}$.

We define 
${\leqslant_{X_{\koflbf}}}\subseteq X_{\koflbf}\times X_{\koflbf}$ by
\begin{align}
{\leqslant_{X_{\koflbf}}}:=&\,
\bigcup\left\{{}^{[z]}{\leqslant_{X_{\K}}}\;\Big\vert \; [z]\in X_{\Lalg}/E_{\Lalg}\right\}
\cup\,
\bigcup\left\{{}_{[z]}{\leqslant_{X_{\K}}}\;\Big\vert \;[z]\in X_{\Lalg}/E_{\Lalg}\right\}
\cup\,\nonumber\\
&\,\bigcup\left\{[z]\times {}^{[z]}\!X_{\K}\;\Big\vert \; [z]\in X_{\Lalg}/E_{\Lalg}\right\}
\cup\,
\bigcup\left\{{}_{[z]}\!X_{\K}\times [z]\;\Big\vert \; [z]\in X_{\Lalg}/E_{\Lalg}\right\}\cup\nonumber\\
&\,\bigcup\left\{{}_{[z]}\!X_{\K}\times{}^{[z]}\!X_{\K}\;\Big\vert \; [z]\in X_{\Lalg}/E_{\Lalg}\right\}\cup \leqslant_{X_{\Lalg}}.
\label{Def:IneqKofL}    
\end{align}
Then $\mathbf{X}_{\koflbf}=(X_{\koflbf},\leqslant_{X_{\koflbf}})$
is a poset.  Intuitively, $\mathbf{X}_{\koflbf}$ is obtained from 
$\mathbf{X}_{\Lalg}$ by placing a copy of $\mathbf{X}_{\K}$
above and below (${}^{[z]}\!\mathbf{X}_{\K}$ and 
${}_{[z]}\!\mathbf{X}_{\K}$, respectively)
each block $[z]$ of $\mathbf{X}_{\Lalg}$ modulo
$E_{\Lalg}$. \\

\begin{example}\label{Example:XofKofL}
Let $\K=\mathbf{S}_3$ from Figure~\ref{fig:Sugihara-examples} and $\Lalg=\Lalg_1$ the first diamond from
Figure~\ref{fig:qRAs-examples}.  Then $\K[\Lalg]$
is the DqRA depicted in Figure~\ref{Figure:GOS1}.
Recall from Examples~\ref{ex:S3} and~\ref{Example:Representation-diamond},
respectively, that both $\K$ and $\Lalg$ are representable,
with the posets used to find their representations depicted
in Figures~\ref{fig:S3Representation} and ~\ref{fig:DiamondRepresentation}, respectively. Using these
posets we can construct the poset $\mathbf{X}_{\koflbf}$,
as described in {\upshape (\ref{Def:XKofL})} and {\upshape (\ref{Def:IneqKofL})}. The structure 
$(X_{\koflbf},\leqslant_{X_\mathbf{K[L]}})$ is drawn 
 in Figure~\ref{fig:XofKofL}.
\\
\end{example}

\begin{figure}[ht]
    \centering
\begin{tikzpicture}
\begin{scope}
% Elements
\node[draw,circle,inner sep=1.5pt] (y) at (-1,1) {};
\node[draw,circle,inner sep=1.5pt] (z) at (1,1) {};
\node[draw,circle,inner sep=1.5pt] (x) at (0,0) {};
\node[draw,circle,inner sep=1.5pt] (v) at (-1,-1) {};
\node[draw,circle,inner sep=1.5pt] (w) at (1,-1) {};
% Order
\draw[order] (y)--(x)--(v);
\draw[order] (z)--(x)--(w);

%Labels
\node[label,anchor=south,yshift=-1pt] at (y) {$x^{[u]}$};
\node[label,anchor=south,yshift=-2pt] at (z) {$y^{[u]}$};
\node[label,anchor=north] at (v) {$x_{[u]}$};
\node[label,anchor=north] at (w) {$y_{[u]}$};
\node[label,anchor=east] at (x) {$u$};
\end{scope}

\begin{scope}[xshift=3.5cm]
% Elements
\node[draw,circle,inner sep=1.5pt] (y) at (-1,1) {};
\node[draw,circle,inner sep=1.5pt] (z) at (1,1) {};
\node[draw,circle,inner sep=1.5pt] (x) at (0,0) {};
\node[draw,circle,inner sep=1.5pt] (v) at (-1,-1) {};
\node[draw,circle,inner sep=1.5pt] (w) at (1,-1) {};
% Order
\draw[order] (y)--(x)--(v);
\draw[order] (z)--(x)--(w);

%Labels
\node[label,anchor=south,yshift=-1pt] at (y) {$x^{[v]}$};
\node[label,anchor=south,yshift=-2pt] at (z) {$y^{[v]}$};
\node[label,anchor=north] at (v) {$x_{[v]}$};
\node[label,anchor=north] at (w) {$y_{[v]}$};
\node[label,anchor=west] at (x) {$v$};
\end{scope}
\end{tikzpicture}
    \caption{The poset $\mathbf{X}_{\koflbf}$ from Example~\ref{Example:XofKofL}.}
    \label{fig:XofKofL}
\end{figure}

Next, we set
\begin{equation}
    E_{\koflbf} :=
    \bigcup\left\{\left([z]\cup\,^{[z]}\!X_{\K}\cup\,_{[z]}\!X_{\K}\right)^2 \;\Big\vert \;
    [z]\in X_\Lalg/E_\Lalg\right\}.
    \label{EKofLDef}
\end{equation}
A block in $X_{\koflbf}/E_{\koflbf}$ consists
of a block $[z]$ in $X_{\Lalg}/E_{\Lalg}$ along with
$^{[z]}\!\mathbf{X}_{\K}$ and $_{[z]}\!\mathbf{X}_{\K}$, the 
copies of $\mathbf{X}_{\K}$ associated with it.
Using the fact that $E_{\Lalg}$ is an equivalence relation on
$X_{\Lalg}$ we can show that $E_{\koflbf}$ is an 
equivalence relation on $X_{\koflbf}$.
Moreover, since ${\leqslant_{X_{\Lalg}}} \subseteq E_{\Lalg}$,
it follows that ${\leqslant_{\koflbf}} \subseteq E_{\koflbf}$.
Recall that $E_{\koflbf}$ can be partially ordered as follows, 
for $(u_1,u_2),(v_1,v_2)\in E_{\koflbf}$,
\begin{align}
	&(u_1,u_2)\preccurlyeq_{E_{\K[\Lalg]}} (v_1,v_2)\qquad 
	\iff \qquad v_1\leqslant_{X_{\koflbf}}u_1\quad\text{and}\quad
	u_2\leqslant_{X_{\koflbf}}v_2\label{EIneq}	
\end{align}

Define $\alpha_{\koflbf}:X_{\koflbf}  \to X_{\koflbf}$ by 
\begin{numcases}{\alpha_{\koflbf}(u)=}
     \alpha_{\Lalg}(u)& if $u\in X_{\Lalg}$\nonumber\\
     \left(\alpha_{\K}(w)\right)^{[z]}& if $u=w^{[z]}$ for some $w\in X_{\K}$ and some $[z]$ in $X_{\Lalg}/E_{\Lalg}$\nonumber\\
     \left(\alpha_{\K}(w)\right)_{[z]}& if $u=w_{[z]}$ for some $w\in X_{\K}$ and some $[z]$ in $X_{\Lalg}/E_{\Lalg}$,\label{Def:AlphaKofL}
\end{numcases}  
and $\beta_{\koflbf}: X_{\koflbf} \to X_{\koflbf}$ by 
\begin{numcases}{\beta_{\koflbf}(u)=}
     \beta_{\Lalg}(u)& if $u\in X_{\Lalg}$\nonumber\\
     \left(\beta_{\K}(w)\right)_{[z]}& if $u=w^{[z]}$ for some $w\in X_{\K}$ and some $[z]$ in $X_{\Lalg}/E_{\Lalg}$\nonumber\\
     \left(\beta_{\K}(w)\right)^{[z]}& if $u=w_{[z]}$ for some $w\in X_{\K}$ and some $[z]$ in $X_{\Lalg}/E_{\Lalg}$,\label{Def:BetaKofL}
\end{numcases}
for $u\in X_{\koflbf}$.\\

\begin{example}\label{Example:EquivAlphaBetaXofKofL}
Let $\K$ and $\Lalg$ be as described in Example~\ref{Example:XofKofL} with $\mathbf{X}_{\koflbf}$
the poset depicted in Figure~\ref{fig:XofKofL}.
Then Figure~\ref{Fig:EquivAlphaBetaXofKofL} depicts
the equivalence classes of $E_{\koflbf}$ as well as 
the maps $\alpha_{\koflbf}$ and $\beta_{\koflbf}$
described in {\upshape(\ref{EKofLDef})},{\upshape (\ref{Def:AlphaKofL})}
and {\upshape(\ref{Def:BetaKofL})}, respectively.\\
\end{example}

\begin{figure}[ht]
    \centering
    \begin{tikzpicture}[pics/sample/.style={code={\draw[#1] (0,0) --(0.6,0) ;}},
    Dotted/.style={% https://tex.stackexchange.com/a/52856/194703
    dash pattern=on 0.1\pgflinewidth off #1\pgflinewidth,line cap=round,
    shorten >=#1\pgflinewidth/2,shorten <=#1\pgflinewidth/2},
    Dotted/.default=3]
        \begin{scope}[box/.style = {draw,dashdotdotted,inner sep=20pt,rounded corners=5pt,thick}]
% Elements
\node[draw,circle,inner sep=1.5pt] (y) at (-1,1) {};
\node[draw,circle,inner sep=1.5pt] (z) at (1,1) {};
\node[draw,circle,inner sep=1.5pt] (x) at (0,0) {};
\node[draw,circle,inner sep=1.5pt] (v) at (-1,-1) {};
\node[draw,circle,inner sep=1.5pt] (w) at (1,-1) {};
% Order
\draw[order] (y)--(x)--(v);
\draw[order] (z)--(x)--(w);
%Labels
\node[label,anchor=south,yshift=-1pt] at (y) {$x^{[u]}$};
\node[label,anchor=south,yshift=-2pt, xshift=5pt] at (z) {$y^{[u]}$};
\node[label,anchor=north] at (v) {$x_{[u]}$};
\node[label,anchor=north, xshift=5pt] at (w) {$y_{[u]}$};
\node[label,anchor=east] at (x) {$u$};
%new alpha
\path (y) edge [->, bend left=23, dashed, thick] node {} (z);
\path (z) edge [->, bend left=23, dashed, thick] node {} (y);
\path (v) edge [->, bend left=23, dashed, thick] node {} (w);
\path (w) edge [->, bend left=23, dashed, thick] node {} (v);
\draw [->,dashed, thick] (x) edge[loop above,looseness=40]node{} (x);
%new beta
\path (y) edge [->, bend left=23, dotted, thick] node {} (v);
\path (z) edge [->, bend left=23, dotted, thick] node {} (w);
\path (v) edge [->, bend left=23, dotted, thick] node {} (y);
\path (w) edge [->, bend left=23, dotted, thick] node {} (z);
\draw [->,dotted, thick] (x) edge[loop right,looseness=40]node{} (x);
%Equivalence block
\node[box,fit=(y)(z)(v)(w)(x),] {};
\end{scope}

\begin{scope}[xshift=4cm, box/.style = {draw,dashdotdotted,inner sep=20pt,rounded corners=5pt,thick}]
% Elements
\node[draw,circle,inner sep=1.5pt] (y) at (-1,1) {};
\node[draw,circle,inner sep=1.5pt] (z) at (1,1) {};
\node[draw,circle,inner sep=1.5pt] (x) at (0,0) {};
\node[draw,circle,inner sep=1.5pt] (v) at (-1,-1) {};
\node[draw,circle,inner sep=1.5pt] (w) at (1,-1) {};
% Order
\draw[order] (y)--(x)--(v);
\draw[order] (z)--(x)--(w);
%Labels
\node[label,anchor=south,yshift=-1pt] at (y) {$x^{[v]}$};
\node[label,anchor=south,yshift=-2pt, xshift=5pt] at (z) {$y^{[v]}$};
\node[label,anchor=north] at (v) {$x_{[v]}$};
\node[label,anchor=north, xshift=5pt] at (w) {$y_{[v]}$};
\node[label,anchor=west] at (x) {$v$};

%new alpha
\path (y) edge [->, bend left=23, dashed, thick] node {} (z);
\path (z) edge [->, bend left=23, dashed, thick] node {} (y);
\path (v) edge [->, bend left=23, dashed, thick] node {} (w);
\path (w) edge [->, bend left=23, dashed, thick] node {} (v);
\draw [->,dashed, thick] (x) edge[loop above,looseness=40]node{} (x);
%new beta
\path (y) edge [->, bend left=23, dotted, thick] node {} (v);
\path (z) edge [->, bend left=23, dotted, thick] node {} (w);
\path (v) edge [->, bend left=23, dotted, thick] node {} (y);
\path (w) edge [->, bend left=23, dotted, thick] node {} (z);
\draw [->,dotted, thick] (x) edge[loop left,looseness=40]node{} (x);
%Equivalence block
\node[box,fit=(y)(z)(v)(w)(x),] {};
%legend
\path (5.5,1) 
 node[matrix,anchor=north east,draw,nodes={anchor=center},inner sep=2pt, thick]  {
  \pic{sample=solid}; & \node{$\leqslant_{X_{\koflbf}}$}; \\
  \pic{sample=dashed}; & \node{$\alpha_{\koflbf}$}; \\
  \pic{sample=dotted}; & \node{$\beta_{\koflbf}$}; \\
  \pic{sample=dashdotdotted}; & \node{$E_{\koflbf}$ blocks}; \\
 };
\end{scope}
    \end{tikzpicture}
     \caption{$E_{\koflbf}$, $\alpha_{\koflbf}$ and
    $\beta_{\koflbf}$ on $\mathbf{X}_{\koflbf}$ in Example~\ref{Example:EquivAlphaBetaXofKofL}.}
    \label{Fig:EquivAlphaBetaXofKofL}
\end{figure}

\begin{lemma}\label{Lem:AlphaBeta}
\begin{enumerate}[label=(\roman*)]
		\item The map $\alpha_{\koflbf}: X_{\koflbf} \to X_{\koflbf}$ is 
		an order automorphism of $\mathbf{X}_{\koflbf}$ such that
		$\alpha_{\koflbf} \subseteq E_{\koflbf}$.\label{Lem:AlphaBeta:Item1}
		\item The map $\beta_{\koflbf}: X_{\koflbf} \to X_{\koflbf}$ is a 
		self-inverse dual order automorphism of $\mathbf{X}_{\koflbf}$ 
		such that $\beta_{\koflbf} \subseteq E_{\koflbf}$.\label{Lem:AlphaBeta:Item2}
		\item $\alpha_{\koflbf}\mathbin{;}\beta_{\koflbf}\mathbin{;}\alpha_{\koflbf} 
		= \beta_{\koflbf}$\label{Lem:AlphaBeta:Item3}
	\end{enumerate}
\end{lemma}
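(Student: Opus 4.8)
The plan is to verify the three claims by reducing each to the corresponding property of the component maps $\alpha_{\K},\beta_{\K}$ on $\mathbf{X}_{\K}$ and $\alpha_{\Lalg},\beta_{\Lalg}$ on $\mathbf{X}_{\Lalg}$, exploiting the fact that the defining formulas~(\ref{Def:AlphaKofL}) and~(\ref{Def:BetaKofL}) act separately on the three kinds of element of $X_{\koflbf}$. The one structural observation that makes everything go through is that $\alpha_{\Lalg}\subseteq E_{\Lalg}$ and $\beta_{\Lalg}\subseteq E_{\Lalg}$ force both maps to be \emph{block-preserving}: for every $z$ the elements $\alpha_{\Lalg}(z)$ and $\beta_{\Lalg}(z)$ lie in the same $E_{\Lalg}$-class $[z]$ as $z$. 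Consequently $\alpha_{\koflbf}$ carries each of the disjoint pieces $X_{\Lalg}$, ${}^{[z]}\!X_{\K}$ and ${}_{[z]}\!X_{\K}$ onto itself, while $\beta_{\koflbf}$ carries $X_{\Lalg}$ onto itself and interchanges ${}^{[z]}\!X_{\K}$ with ${}_{[z]}\!X_{\K}$.

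For~\ref{Lem:AlphaBeta:Item1} and~\ref{Lem:AlphaBeta:Item2} I would first settle the set-theoretic part: $\alpha_{\koflbf}$ is a bijection because it is assembled from the bijections $\alpha_{\Lalg}$ and two copies of $\alpha_{\K}$ on disjoint pieces, with an inverse of the same shape built from $\alpha_{\Lalg}^{-1}$ and $\alpha_{\K}^{-1}$; and $\beta_{\koflbf}$ is self-inverse, since on $X_{\Lalg}$ this is the self-inverseness of $\beta_{\Lalg}$ and on $w^{[z]}$ (respectively $w_{[z]}$) applying~(\ref{Def:BetaKofL}) twice returns $\bigl(\beta_{\K}(\beta_{\K}(w))\bigr)^{[z]}=w^{[z]}$ using that $\beta_{\K}$ is self-inverse. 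The order behaviour is then checked on the five families generating $\leqslant_{X_{\koflbf}}$ in~(\ref{Def:IneqKofL}): the relations internal to $X_{\Lalg}$ and internal to a copy of $X_{\K}$ are handled by the (dual) order-preservation of $\alpha_{\Lalg},\beta_{\Lalg}$ and $\alpha_{\K},\beta_{\K}$, while for the three cross-level families $[z]\times{}^{[z]}\!X_{\K}$, ${}_{[z]}\!X_{\K}\times[z]$ and ${}_{[z]}\!X_{\K}\times{}^{[z]}\!X_{\K}$ block-preservation ensures the images remain in the block $[z]$ and in the appropriate upper/lower copy, so the image pair again belongs to one of these families. For $\alpha_{\koflbf}$ the copies are preserved and the image family coincides with the source family, giving order-preservation; for $\beta_{\koflbf}$ the upper/lower swap sends, e.g., a relation $z'\leqslant w^{[z]}$ to $(\beta_{\K}(w))_{[z]}\leqslant\beta_{\Lalg}(z')$, which lives in ${}_{[z]}\!X_{\K}\times[z]$ and so points the correctly reversed way, the remaining two cross-level families being symmetric. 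The containments $\alpha_{\koflbf}\subseteq E_{\koflbf}$ and $\beta_{\koflbf}\subseteq E_{\koflbf}$ are immediate from~(\ref{EKofLDef}), since each element and its image sit in the common $E_{\koflbf}$-class attached to $[z]$.

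For~\ref{Lem:AlphaBeta:Item3} I would use that, under the composition convention $R\mathbin{;}S=\{(x,y)\mid \exists t,\ (x,t)\in R,\ (t,y)\in S\}$, the relation $\alpha_{\koflbf}\mathbin{;}\beta_{\koflbf}\mathbin{;}\alpha_{\koflbf}$ is the graph of the map $u\mapsto\alpha_{\koflbf}\bigl(\beta_{\koflbf}(\alpha_{\koflbf}(u))\bigr)$, so the identity reduces to a pointwise check on each kind of element. On $X_{\Lalg}$ it reduces directly to $\alpha_{\Lalg}\mathbin{;}\beta_{\Lalg}\mathbin{;}\alpha_{\Lalg}=\beta_{\Lalg}$, which holds in the representation of $\mathbf{L}$. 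On $u=w^{[z]}$ the two $\alpha_{\koflbf}$ steps keep us in the upper copy while the middle $\beta_{\koflbf}$ step moves us to the lower copy, so the composite equals $\bigl(\alpha_{\K}(\beta_{\K}(\alpha_{\K}(w)))\bigr)_{[z]}$; by $\alpha_{\K}\mathbin{;}\beta_{\K}\mathbin{;}\alpha_{\K}=\beta_{\K}$, valid in the representation of $\mathbf{K}=\mathbf{S}_3$, this equals $(\beta_{\K}(w))_{[z]}=\beta_{\koflbf}(w^{[z]})$, and the case $u=w_{[z]}$ is identical with the roles of the two copies exchanged.

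The computations are otherwise routine; the step I expect to be the main obstacle is the bookkeeping in~\ref{Lem:AlphaBeta:Item2}, where one must track simultaneously the order-\emph{reversal} coming from $\beta_{\Lalg}$ and $\beta_{\K}$ and the upper/lower \emph{swap} built into~(\ref{Def:BetaKofL}), and check for each of the three cross-level families that the swapped image genuinely lands in the oppositely oriented family rather than merely in the correct block.
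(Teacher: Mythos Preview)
Your proposal is correct and follows essentially the same approach as the paper: reduce each claim to the corresponding property of $\alpha_{\K},\beta_{\K},\alpha_{\Lalg},\beta_{\Lalg}$ via case analysis on the three kinds of element of $X_{\koflbf}$, using block-preservation throughout. The paper organises the order-reversal check for $\beta_{\koflbf}$ as six cases on where the pair $(u,v)$ lies and proves the biconditional directly in each, whereas you organise it as five cases on the generating families of $\leqslant_{X_{\koflbf}}$ and recover the converse from self-inverseness; both packagings amount to the same computation.
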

\begin{proof}
	We prove items \ref{Lem:AlphaBeta:Item2} and \ref{Lem:AlphaBeta:Item3}.
	The proof of item \ref{Lem:AlphaBeta:Item1} is similar to that of
	\ref{Lem:AlphaBeta:Item2}. 
	
	To prove \ref{Lem:AlphaBeta:Item2} we first show that 
        $\beta_{\koflbf}\subseteq E_{\koflbf}$.  Let $u,v\in X_{\koflbf}$ such
        that $\beta_{\koflbf}(u)=v$.  There are three cases to consider:

        \vskip 0.2cm
        \noindent
	\underline{Case 1:}
        %cr shortened 
        If $u\in X_{\Lalg}$, then $v\in X_{\Lalg}$ and
        $v=\beta_{\koflbf}(u)=\beta_{\Lalg}(u)$, and so $(u, v) \in   %Furthermore,
        %$\left(u,\beta_{\Lalg}(u)\right),\left(\beta_{\Lalg}(u),v\right)\in 
        \beta_{\Lalg}\subseteq E_{\Lalg}$. Hence, $u \in [u]$ and $v \in [u]$, which means $(u, v) \in E_{\koflbf}$.
        %By transitivity and~(\ref{EKofLDef}) we have $(u,v)\in E_{\Lalg}\subseteq E_{\koflbf}$.

        \vskip 0.2cm
        \noindent
        %cr shortened
	\underline{Case 2:}
        If $u\in {}^{[z]}\!\!X_{\K}$ for some $[z]\in X_{\Lalg}/E_{\Lalg}$, then
        $v\in {}_{[z]}\!X_{\K}$ and there exist elements $w_1,w_2\in X_{\K}$ such that 
        $u=(w_1)^{[z]}$ and $v=(w_2)_{[z]}$. % and $\beta_{\koflbf}(u)=\left(\beta_{\K}(w_1)\right)_{[z]}=(w_2)_{[z]}$.  
        Since ${}^{[z]}\!X_{\K}\times {}_{[z]}\!X_{\K}\subseteq E_{\koflbf}$
        by~(\ref{EKofLDef}), it follows that $(u,v)\in E_{\koflbf}.$

        \vskip 0.2cm
        \noindent
        %cr shortened
	\underline{Case 3:}
        If $u\in {}_{[z]}\!X_{\K}$ for some $[z]\in X_{\Lalg}/E_{\Lalg}$, then
        $v\in {}^{[z]}\!X_{\K}$ and there exist elements $w_1,w_2\in X_{\K}$ such that 
        $u=(w_1)_{[z]}$ and $v=(w_2)^{[z]}$. % and $\beta_{\koflbf}(u)=\left(\beta_{\K}(w_1)\right)^{[z]}=(w_2)^{[z]}$.  
        Since ${}_{[z]}\!X_{\K}\times {}^{[z]}\!\!X_{\K}\subseteq E_{\koflbf}$
        by~(\ref{EKofLDef}), it follows that $(u,v)\in E_{\koflbf}.$

        \vskip 0.2cm
	To see that $\beta_{\koflbf}$ is a dual order 
        automorphism, let $u,v\in X_{\koflbf}$.
        There are six cases to consider.

        \vskip 0.2cm
	\noindent
	\underline{Case 1:} If $u,v\in X_{\Lalg}$, then
        \[u\leqslant_{X_{\koflbf}} v\iff u\leqslant_{X_{\Lalg}}v\iff
        \beta_{\Lalg}(v)\leqslant_{X_{\Lalg}}\beta_{\Lalg}(u) \iff
        \beta_{\koflbf}(v)\leqslant_{X_{\koflbf}}\beta_{\koflbf}(u).\]
   
	\noindent
	\underline{Case 2:} 
        If $u,v\in {}^{[z]}\!X_{\K}$ for some
        $[z]\in X_{\Lalg}/E_{\Lalg}$, then there exist 
        $w_1,w_2\in X_{\K}$ such that 
        $u=(w_1)^{[z]}$ and $v=(w_2)^{[z]}$. Hence,
        \begin{align*}          
        u\leqslant_{X_{\koflbf}}v  \iff
        w_1\leqslant_{X_{\K}} w_2 & \iff
        \beta_{\K}(w_2)\leqslant_{X_{\K}}\beta_{\K}(w_1) \\
        &\iff
         \left(\beta_{\K}(w_2)\right)_{[z]}\leqslant_{X_{\koflbf}}\left(\beta_{\K}(w_1)\right)_{[z]}\\
         & \iff
        \beta_{\koflbf}(v)\leqslant_{X_{\koflbf}}
        \beta_{\koflbf}(u). 
        \end{align*}

	\noindent
	\underline{Case 3:}
        If $u,v\in {}_{[z]}\!X_{\K}$ for some
        $[z]\in X_{\Lalg}/E_{\Lalg}$, the proof is 
        similar to the proof of the previous case
        described above.

        \vskip 0.2cm
	\noindent
	\underline{Case 4:} 
         If $u\in X_{\Lalg}$ and $v\in {}^{[u]}\!X_{\K}$,
        then $\left(u,\beta_{\Lalg}(u)\right)\in\beta_{\Lalg}
        \subseteq E_{\Lalg}$, i.e., $\beta_{\Lalg}(u)\in[u]$, and there exist $w\in X_{\K}$ such that $v=w^{[u]}$.
        It follows that
        \begin{align*}
        u\leqslant_{X_{\koflbf}} v
        \iff
        u\leqslant_{X_{\koflbf}}w^{[u]}
        & \iff
        \left(\beta_{\K}(w)\right)_{[u]}\leqslant_{X_{\koflbf}}\beta_{\Lalg}(u)\\       
        & \iff
        \beta_{\koflbf}(v)\leqslant_{X_{\koflbf}}
        \beta_{\koflbf}(u).
        \end{align*}

	\noindent
	\underline{Case 5:} 
        If $u\in {}_{[u]}\!X_{\K}$ and $v\in X_{\Lalg}$,
        the proof is similar to the proof of the previous case described above.

        \vskip 0.2cm
	\noindent
	\underline{Case 6:} 
        If $u\in {}_{[z]}\!X_{\K}$ and $v\in {}^{[z]}\!X_{\K}$
        for some $[z]\in X_{\Lalg}/E_{\Lalg}$, then there
        exist $w_1,w_2\in X_{\K}$ such that $u=(w_1)_{[z]}$
        and $v=(w_2)^{[z]}$. Thus,
        \begin{align*}
        u\leqslant_{X_{\koflbf}}v\iff
        (w_1)_{[z]}\leqslant_{X_{\koflbf}} (w_2)^{[z]} & \iff
        \left(\beta_{\K}(w_2)\right)_{[z]}\leqslant_{X_{\koflbf}}
        \left(\beta_{\K}(w_1)\right)^{[z]}\\
        & \iff
        \beta_{\koflbf}(v)\leqslant_{X_{\koflbf}}\beta_{\koflbf}(u).
        \end{align*}
        Note that for any other combination of elements $u,v\in X_{\koflbf}$ it
        will follow immediately from~(\ref{Def:IneqKofL}) 
        and~(\ref{Def:BetaKofL}) that $u\nleqslant_{X_{\koflbf}} v$
        and $\beta_{\koflbf}(v)\nleqslant_{X_{\koflbf}} \beta_{\koflbf}(u)$.

        %cr Added proof that new beta is self-inverse. 

    \vskip 0.2cm
        Next we show that $\beta_{\koflbf}$ is self-inverse. Let $u \in X_{\koflbf}$. There are three cases.

        \vskip 0.2cm
        \noindent
	\underline{Case 1:} If $u \in X_\Lalg$, then $\beta_{\koflbf}(u) = \beta_\Lalg(u) \in X_\Lalg$ and so $\beta_{\koflbf}\left(\beta_{\koflbf}(u)\right) = \beta_{\Lalg}\left(\beta_\Lalg(u)\right))$. Hence, since  $\beta_\Lalg$ is self-inverse, we get $\beta_{\koflbf}\left(\beta_{\koflbf}(u)\right) = u$. 

    \vskip 0.2cm 
    \noindent
    \underline{Case 2:} If $u 
    \in {}_{[z]}\!X_{\K}$ for some
        $[z]\in X_{\Lalg}/E_{\Lalg}$, then there exists $w\in X_{\K}$
        such that $u=w_{[z]}$. Hence, 
        \begin{align*}
        \beta_{\koflbf}\left(\beta_{\koflbf}(u)\right) = \beta_{\koflbf}\left(\beta_{\koflbf}\left(w_{[z]}\right)\right) & = \beta_{\koflbf}\left(\left(\beta_\K(w)\right)^{[z]}\right)\\
        & = \left(\beta_\K\left(\beta_\K(w)\right)\right)_{[z]} = w_{[z]}.
        \end{align*}
        The last equality follows from the fact that $\beta_\K$ is self-inverse. 

    \vskip 0.2cm 
    \noindent
    \underline{Case 3:} If $u\in {}^{[z]}\!X_{\K}$ for some
        $[z]\in X_{\Lalg}/E_{\Lalg}$, the proof is similar to the previous case.

    \vskip 0.2cm

        Finally we show that $\beta_{\koflbf}$
        is surjective.  Let $u \in X_{\koflbf}$. We have to find some $v \in X_{\koflbf}$ such that $\beta_{\koflbf}(v) = u$. 
        %v\in X_{\koflbf}$ such that
        %$\beta_{\koflbf}(u)=\beta_{\koflbf}(v)$. 
        There are three cases to consider:
        \vskip 0.2cm
        \noindent
	\underline{Case 1:} If $u 
    \in {}_{[z]}\!X_{\K}$ for some
        $[z]\in X_{\Lalg}/E_{\Lalg}$, then there exists $w_1\in X_{\K}$
        such that $u=(w_1)_{[z]}$. %and $v_1=(w_2)_{[z]}$.
        %Then $\left(\beta_{\K}(w_1)\right)^{[z]}=\beta_{\koflbf}(u)
        %=\beta_{\koflbf}(v)=\left(\beta_{\K}(w_2)\right)^{[z]}$
        %implies that $\beta_{\K}(w_1)=\beta_{\K}(w_2)$.  
        Since $\beta_{\K}$ is surjective, there is some $w_2 \in X_{\K}$ such that $\beta_{\K}(w_2)=w_1$
        and hence $u=\left(w_1\right)_{[z]}= \left(\beta_{\K}(w_2)\right)_{[z]} = \beta_{\K[\Lalg]}\left(\left(w_2\right)^{[z]}\right)$. %\left(w_2\right)_{[z]}=v$.

        \vskip 0.2cm
        \noindent
	\underline{Case 2:} If $u\in {}^{[z]}\!X_{\K}$, the proof is similar to the previous case.

        \vskip 0.2cm
        \noindent
	\underline{Case 3:}
        If $u\in X_{\Lalg}$, then the surjectivity of $\beta_\Lalg$ implies that there is some $v \in X_\Lalg$ such that 
        $u = \beta_{\Lalg}(v)=\beta_{\koflbf}(v)$.
        %=\beta_{\koflbf}(v)=\beta_{\Lalg}(v)$.  %Since $\beta_{\Lalg}$ is 
        %surjective, it follows that $u=v$.

        %For any other combination of $u,v\in X_{\koflbf}$ it is
        %immediate that $u\neq v$ and by~(\ref{Def:BetaKofL})
        %also that $\beta_{\koflbf}(u)\neq\beta_{\koflbf}(v)$.

        To prove item  \ref{Lem:AlphaBeta:Item3} let $u \in X_{\koflbf}$. We consider three cases.

        \vskip 0.2cm
        \noindent
	\underline{Case 1:} 
        If $u\in X_{\Lalg}$, then, since $\alpha_{\Lalg}\mathbin{;}\beta_{\Lalg}\mathbin{;}\alpha_{\Lalg}= \beta_{\Lalg}$,
        we have that 
        \[\alpha_{\koflbf}\left(\beta_{\koflbf}
        \left(\alpha_{\koflbf}\left(u\right)\right)\right)       =\alpha_{\Lalg}\left(\beta_{\Lalg}\left(\alpha_{\Lalg}\left(u\right)\right)\right)
        =\beta_{\Lalg}(u)=\beta_{\koflbf}(u).\]

        \noindent
	\underline{Case 2:} 
        If $u\in {}^{[z]}\!X_{\K}$ for some $[z]\in X_{\Lalg}/E_{\Lalg}$, then $u=w^{[z]}$ for some $w\in X_{\K}$.
        Since $\alpha_{\K}\mathbin{;}\beta_{\K}\mathbin{;}\alpha_{\K}= \beta_{\K}$, we have that 
        \begin{align*}                           \alpha_{\koflbf}\left(\beta_{\koflbf}\left(\alpha_{\koflbf}\left(u\right)\right)\right)
        &=\alpha_{\koflbf}\left(\beta_{\koflbf}\left(\left(\alpha_{\K}(w)\right)^{[z]}\right)\right)\\
        &=\alpha_{\koflbf}\left(\left(\beta_{\K}\left(\alpha_{\K}(w)\right)\right)_{[z]}\right)\\
        &=\left(\alpha_{\K}\left(\beta_{\K}\left(\alpha_{\K}(w)\right)\right)\right)_{[z]}\\
        &=\left(\beta_{\K}(w)\right)_{[z]}=\beta_{\koflbf}(u).
        \end{align*}

        \noindent
\underline{Case 3:}
        If $u\in {}_{[z]}\!X_{\K}$ for some $[z]\in X_{\Lalg}/E_{\Lalg}$ the proof is similar to the previous case.\\
        \end{proof}

Lemma~\ref{Lem:AlphaBeta} shows that $\alpha_{\K[\Lalg]}$ and $\beta_{\K[\Lalg]}$ satisfy the conditions of Theorem~\ref{thm:Dq(E)}, so we obtain the following result.\\

\begin{theorem}\label{thm:XKL-gives-DqRA}
The algebra $\left\langle \mathsf{Up}\left(E_{\koflbf}, \preccurlyeq_{E_{\koflbf}}\right), \cap, \cup, \mathbin{;}, {\sim}, {-}, {\neg}, \leqslant_{X_{\K[\Lalg]}}, 
%ac2611 
0_{X_{\K[\Lalg]}}
%\left(\alpha_{\koflbf} \mathbin{;}\leqslant_{X_{\K[\Lalg]}}^{c\smile}\right)
\right\rangle$  is a DqRA.\\
\end{theorem}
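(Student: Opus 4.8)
The plan is to deduce the statement as a direct instance of Theorem~\ref{thm:Dq(E)}, which asserts that $\mathbf{Dq}(\mathbf E)$ is a DqRA whenever $(X,\leqslant)$ is a poset, $E$ is an equivalence relation on $X$ with ${\leqslant}\subseteq E$, $\alpha$ is an order automorphism, $\beta$ is a self-inverse dual order automorphism, and both $\alpha,\beta\subseteq E$ and $\beta=\alpha\mathbin{;}\beta\mathbin{;}\alpha$ hold. All that is required, therefore, is to confirm that the data $\mathbf{X}_{\koflbf}$, $E_{\koflbf}$, $\alpha_{\koflbf}$ and $\beta_{\koflbf}$ constructed above meet each of these hypotheses; the algebra in the statement is then literally $\mathbf{Dq}\left(E_{\koflbf},\preccurlyeq_{E_{\koflbf}}\right)$.

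First I would collect the structural prerequisites already noted during the construction: that $\mathbf{X}_{\koflbf}=(X_{\koflbf},\leqslant_{X_{\koflbf}})$ is a poset, that $E_{\koflbf}$ as defined in~(\ref{EKofLDef}) is an equivalence relation on $X_{\koflbf}$, and that ${\leqslant_{X_{\koflbf}}}\subseteq E_{\koflbf}$. The first follows from the definition of the order in~(\ref{Def:IneqKofL}); the second uses that $E_{\Lalg}$ is an equivalence relation together with the block decomposition of~(\ref{EKofLDef}); and the third is inherited from ${\leqslant_{X_{\Lalg}}}\subseteq E_{\Lalg}$.

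It then remains to supply the conditions on the two automorphisms, and these are exactly what Lemma~\ref{Lem:AlphaBeta} provides: item~\ref{Lem:AlphaBeta:Item1} gives that $\alpha_{\koflbf}$ is an order automorphism of $\mathbf{X}_{\koflbf}$ with $\alpha_{\koflbf}\subseteq E_{\koflbf}$, item~\ref{Lem:AlphaBeta:Item2} gives that $\beta_{\koflbf}$ is a self-inverse dual order automorphism with $\beta_{\koflbf}\subseteq E_{\koflbf}$, and item~\ref{Lem:AlphaBeta:Item3} gives $\alpha_{\koflbf}\mathbin{;}\beta_{\koflbf}\mathbin{;}\alpha_{\koflbf}=\beta_{\koflbf}$. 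With every hypothesis of Theorem~\ref{thm:Dq(E)} verified, the theorem applies with monoid identity $1={\leqslant_{X_{\koflbf}}}$, negation constant $0_{X_{\koflbf}}=\alpha_{\koflbf}\mathbin{;}{\leqslant_{X_{\koflbf}}^{c\smile}}$, and the operations ${\sim}$, ${-}$, ${\neg}$ defined as in that theorem, yielding at once that the displayed algebra is a DqRA. Since Lemma~\ref{Lem:AlphaBeta} and the preceding construction already carry out the substantive verifications, there is no genuine obstacle in this proof; the only point requiring care is the bookkeeping that identifies the constant $0_{X_{\koflbf}}$ appearing in the statement with the element $\alpha_{\koflbf}\mathbin{;}{\leqslant_{X_{\koflbf}}^{c\smile}}$ prescribed by Theorem~\ref{thm:Dq(E)}.
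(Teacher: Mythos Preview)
Your proposal is correct and matches the paper's approach exactly: the paper simply notes that Lemma~\ref{Lem:AlphaBeta} verifies the hypotheses of Theorem~\ref{thm:Dq(E)} for $\alpha_{\koflbf}$ and $\beta_{\koflbf}$, and the result follows immediately.
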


Next we define a map $\psi:K\backslash\{a_0\}\cup L\to \mathcal{P}\left( (X_{\K[\Lalg]})^2\right)$ that will show $\mathbf{K[L]}$ to be representable.  For the sake of readability we let $R\subseteq X_{\koflbf}
\times X_{\koflbf}$ denote the following binary relation:
\begin{align}  
    R:= &\,\bigcup \left\{{}^{[z]}{\leqslant_{X_{\K}}} \;\Big\vert \; [z]\in  X_{\Lalg}/E_{\Lalg}\right\}\cup
 \bigcup \left\{{}_{[z]}{\leqslant_{X_{\K}}} \;\Big\vert \; [z]\in  X_{\Lalg}/E_{\Lalg}\right\}\cup\nonumber\\
&\,\bigcup \left\{[z]\times {}^{[z]}\!X_{\K} \;\Big\vert \; [z]\in  X_{\Lalg}/E_{\Lalg}\right\}\cup
        \bigcup \left\{{}_{[z]}\!X_{\K}\times [z] \;\Big\vert \; [z]\in  X_{\Lalg}/E_{\Lalg}\right\}\cup\nonumber\\
        &\,\bigcup \left\{{}_{[z]}\!X_{\K}\times {}^{[z]}\!X_{\K} \;\Big\vert \; [z]\in  X_{\Lalg}/E_{\Lalg}\right\}.\label{Equation:RelationRDef}
    \end{align}
In fact, $R = {{\leqslant}_{X_{\mathbf{K[L]}}}} {\setminus} {\leqslant_{X_{\mathbf{L}}}}$. Now, for $m\in K\backslash\{a_0\}\cup L$, define 
\begin{numcases}{\psi(m)=}
	E_{\K[\Lalg]}&if $m=a_1$	\nonumber\\	
	R\cup
        \varphi_{\Lalg}(m),&
	if $m\in L$,\label{PsiDef}\\	
	\varnothing,
	&if $m=a_{-1}$ \nonumber	
\end{numcases}

Before setting out to prove that this construction provides a
representation for the nested sum of $\mathbf{K}$
and $\mathbf{L}$, we consider an example that demonstrates how
the representation is constructed.\\

\begin{example} \label{Example:repsKofL}
Let $\mathbf{K}=\mathbf{S}_3$ and $\mathbf{L}=\mathbf{L}_1$ {\upshape(}the first diamond in Figure~\ref{fig:qRAs-examples}{\upshape)}. 
Then, as per Example~\ref{Example:KofLConstruction}, the six-element algebra from Figure~\ref{Figure:GOS1} is the nested sum of $\mathbf{K}$ and $\mathbf{L}$.
The posets used to represent 
$\mathbf{K}$ and $\mathbf{L}$, as well
as their representations, are given in Figures~\ref{fig:S3Representation} 
and~\ref{fig:DiamondRepresentation},
respectively {\upshape(}their representations are also depicted at the top of Figure~\ref{Fig:KofLRepresentation}{\upshape)}.  The poset $\mathbf{X}_{\koflbf}$ is constructed
from these posets and depicted in
Figures~\ref{fig:XofKofL} and~\ref{Fig:EquivAlphaBetaXofKofL}.
Recall that $R$ is defined as in~(\ref{Equation:RelationRDef}). 
%Then the relation $R$ defined in~(\ref{Equation:RelationRDef}) is given by
%\[\left\{(x^{[u]},x^{[u]}),(y^{[u]},y^{[u]}),(x_{[u]},x_{[u]}),(y_{[u]},y_{[u]}), (x^{[v]},x^{[v]}),(y^{[v]},y^{[v]}),(x_{[v]},x_{[v]}),(y_{[v]},y_{[v]})\right\}.\]
Now, using the labels for elements of
$\koflbf$ from Figure~\ref{Figure:GOS1}, we have that
\begin{align*}
    \psi(\bot)&=\varnothing\\    \psi(0)&=R={\leqslant_{X_{\koflbf}}}{\setminus}{\leqslant_{X_\Lalg}}\\
    \psi(a)&=R\cup \{(u,u)\}={\leqslant_{X_{\koflbf}}}{\setminus}\{(v,v)\}\\
    \psi(b)&=R\cup \{(v,v)\}={\leqslant_{X_{\koflbf}}}{\setminus}\{(u,u)\}, \text{ and }\\
    \psi(1)&=R\,\cup \leqslant_{X_\Lalg}\, = {\leqslant_{X_{\koflbf}}}\\
    \psi(\top)&=E_{\koflbf}=
    \bigcup_{z\in\{u, v\}}\left([z]\cup\{x^{[z]}, y^{[z]}\}\cup \{x_{[z]}, y_{[z]}\}\right)^2
\end{align*}
The set of
binary relations $\{\psi(\bot),\psi(0),\psi(a),\psi(b),\psi(1),\psi(\top)\}$ forms
a subuniverse of  
$\langle \mathsf{Up}( E_{\koflbf},\preccurlyeq_{E_{\K[\Lalg]}}),\cap, \cup, \mathbin{;}, {\sim}, {-}, \neg, {\leqslant_{X_{\koflbf}}}, R\rangle$
and hence $\koflbf$ is representable, as depicted in Figure~\ref{Fig:KofLRepresentation}.\\
\end{example}

\begin{figure}[ht]
\centering
\begin{tikzpicture}[scale=0.85]
	
%S3-representation
\begin{scope}[xshift=-3cm]
% Elements
\node[draw,circle,inner sep=1.5pt,fill] (bot) at (0,0) {};
\node[draw,circle,inner sep=1.5pt,fill] (id) at (0,1) {};
\node[draw,circle,inner sep=1.5pt,fill] (top) at (0,2) {};
% Order
\draw[order] (bot)--(id)--(top);
% Labels
\node[label,anchor=east,xshift=1pt] at (top) {$\{x, y\}^2$};
\node[label,anchor=east,xshift=1pt] at (id) {$\{(x,x), (y, y)\}$};
\node[label,anchor=east,xshift=1pt] at (bot) {$\varnothing$};
\end{scope}

%diamond representation
\begin{scope}[xshift=2.5cm]
% Elements
\node[draw,circle,inner sep=1.5pt,fill] (0) at (0,0) {};
\node[draw,circle,inner sep=1.5pt,fill] (a) at (-1,1) {};
\node[draw,circle,inner sep=1.5pt,fill] (b) at (1,1) {};
\node[draw,circle,inner sep=1.5pt,fill] (1) at (0,2) {};
% Order
\draw[order] (0)--(a)--(1);
\draw[order] (0)--(b)--(1);
% Labels
\node[label,anchor=south] at (1) {$\{(u,u), (v,v)\}$};
\node[label,anchor=east,xshift=1pt] at (a) {$\left\{\left(u, u\right)\right\}$};

\node[label,anchor=west,xshift=-1pt] at (b) {$\left\{\left(v, v\right)\right\}$};
\node[label,anchor=north] at (0) {$\varnothing$};
\end{scope}

%6-element DqRA representation
\begin{scope}[yshift=-6cm]
%Elements
\node[draw,circle,inner sep=1.5pt,fill] (a1) at (1,0) {};
\node[draw,circle,inner sep=1.5pt,fill] (a2) at (1,4) {};
\node[draw,circle,inner sep=1.5pt,fill] (b0) at (1,1) {};
\node[draw,circle,inner sep=1.5pt,fill] (b1) at (1,3) {};
\node[draw,circle,inner sep=1.5pt,fill] (b2) at (0,2) {};
\node[draw,circle,inner sep=1.5pt,fill] (b3) at (2,2) {};
% Order
\draw[order] (b0)--(b2);
\draw[order] (b0)--(b3);
\draw[order] (b2)--(b1);
\draw[order] (b3)--(b1);
\draw[order] (a1)--(b0);
\draw[order] (b1)--(a2);
% Labels
\node[label,anchor=east,xshift=1pt] at (b2) {{\small ${\leqslant_{X_{\K[\Lalg]}}}{\setminus} \{(v, v)\}$}};
\node[label,anchor=east,yshift=-1pt] at (b0) {{\small $R={\leqslant_{X_{\K[\Lalg]}}}{\setminus} \leqslant_{X_\Lalg}$}};
\node[label,anchor=west] at (b1) {{\small $\leqslant_{X_{\K[\Lalg]}}$}};
\node[label,anchor=west,xshift=-1pt] at (b3) {{\small ${\leqslant_{X_{\K[\Lalg]}}}{\setminus} \{(u,u)\}$}};
\node[label,anchor=east,xshift=1pt] at (a1) {$\varnothing$};
\node[label,anchor=west,yshift=5pt] at (a2) {{\small $\displaystyle\bigcup_{z\in\{u, v\}}\left([z]\cup\{x^{[z]}, y^{[z]}\}\cup \{x_{[z]}, y_{[z]}\}\right)^2$}};
\end{scope}

\end{tikzpicture}
\caption{The representations of $\K$, $\Lalg$ and $\koflbf$ from Example~\ref{Example:repsKofL}.}
\label{Fig:KofLRepresentation}
\end{figure}

Having given an example of how the new representation will work, 
we now prove that $\psi$ does indeed map elements from 
$K\backslash\{a_0\}\cup L$ to elements in $\mathsf{Up}\left(E_{\K[\Lalg]}, \preccurlyeq_{E_{\K[\Lalg]}}\right)$.

\vskip 0.4cm 

\begin{lemma}\label{Lem:PsiUpsets}
	For all $m\in K\backslash\{a_0\}\cup L$,
	the set $\psi(m)$ as defined in {\upshape (\ref{PsiDef})}
	is an up-set of $\left(E_{\koflbf}, \preccurlyeq_{E_{\koflbf}}\right)$.
\end{lemma}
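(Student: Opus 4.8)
The plan is to check the up-set condition separately for each of the three clauses of $\psi$ in (\ref{PsiDef}). The two extreme cases are immediate: for $m=a_1$ we have $\psi(m)=E_{\koflbf}$, the whole carrier of $(E_{\koflbf},\preccurlyeq_{E_{\koflbf}})$, which is trivially an up-set, and for $m=a_{-1}$ we have $\psi(m)=\varnothing$, which is vacuously one. So the content lies entirely in the case $m\in L$, where $\psi(m)=R\cup\varphi_{\Lalg}(m)$. Throughout I would exploit that, by (\ref{Def:IneqKofL}), the poset $\mathbf{X}_{\koflbf}$ splits into order-components indexed by the blocks $[z]\in X_{\Lalg}/E_{\Lalg}$, each component consisting of the antichain ${}_{[z]}\!X_{\K}$ (the \emph{bottom} layer) below the block $[z]$ (the \emph{middle} layer, carrying $\leqslant_{X_{\Lalg}}$) below the antichain ${}^{[z]}\!X_{\K}$ (the \emph{top} layer). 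The only element $\leqslant_{X_{\koflbf}}$ a bottom element is itself; the only element $\geqslant_{X_{\koflbf}}$ a top element is itself; below a middle element lie the bottom layer and the $\leqslant_{X_{\Lalg}}$-smaller middles, and above a middle element lie the top layer and the $\leqslant_{X_{\Lalg}}$-larger middles. By (\ref{EIneq}), moving up in $\preccurlyeq_{E_{\koflbf}}$ means taking the first coordinate $\leqslant_{X_{\koflbf}}$-down and the second $\leqslant_{X_{\koflbf}}$-up.

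Fix $(u_1,u_2)\in\psi(m)$ with $(u_1,u_2)\preccurlyeq_{E_{\koflbf}}(v_1,v_2)$; I must show $(v_1,v_2)\in\psi(m)$. My first step would be to verify that $R$ is itself an up-set. Reading off (\ref{Equation:RelationRDef}), the pairs of $R$ are exactly the diagonals of the top and bottom layers together with all bottom--middle, middle--top and bottom--top pairs. Running through these five forms and using the layer facts above, in each form the larger pair $(v_1,v_2)$ keeps its first coordinate out of the top layer (or equal) and its second coordinate out of the bottom layer (or equal), so it is again one of the five forms and lies in $R$. Hence if $(u_1,u_2)\in R$ then $(v_1,v_2)\in R\subseteq\psi(m)$.

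It then remains to treat $(u_1,u_2)\in\varphi_{\Lalg}(m)$, so that $u_1,u_2$ both lie in a single block $[z]$; here I would use that $\varphi_{\Lalg}(m)\in\mathsf{Up}(E_{\Lalg},\preccurlyeq_{E_{\Lalg}})$, since $\varphi_{\Lalg}$ embeds $\mathbf{L}$ into $\mathbf{Dq}(E_{\Lalg},\preccurlyeq_{E_{\Lalg}})$. If in passing to $(v_1,v_2)$ the coordinate $v_1$ leaves $[z]$ it can only drop into ${}_{[z]}\!X_{\K}$, and if $v_2$ leaves $[z]$ it can only rise into ${}^{[z]}\!X_{\K}$; in any such case $(v_1,v_2)$ is a bottom--middle, middle--top or bottom--top pair and so lies in $R$. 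Otherwise $v_1,v_2\in[z]$, whence $(u_1,u_2)\preccurlyeq_{E_{\Lalg}}(v_1,v_2)$ inside $E_{\Lalg}$ (the two orderings agreeing on a single block), and the $E_{\Lalg}$-up-set property of $\varphi_{\Lalg}(m)$ yields $(v_1,v_2)\in\varphi_{\Lalg}(m)\subseteq\psi(m)$.

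I expect the main obstacle to be bookkeeping rather than any single hard idea: one must track the layer (bottom, middle or top) of each of the four coordinates and confirm that moving up in $\preccurlyeq_{E_{\koflbf}}$ either stays inside $R$ or, when both coordinates remain in one block, reduces to the $E_{\Lalg}$-up-set property of $\varphi_{\Lalg}(m)$. The genuinely delicate point is the interface between $R$ and $\varphi_{\Lalg}(m)$ — verifying that whenever a middle--middle pair of $\varphi_{\Lalg}(m)$ moves up and a coordinate exits the middle layer, the resulting pair is precisely one of the forms absorbed by $R$.
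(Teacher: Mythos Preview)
Your proposal is correct and follows essentially the same route as the paper's proof: both dispose of the extreme cases $m=a_{\pm 1}$ trivially and then, for $m\in L$, perform a case analysis on which piece of $\psi(m)=R\cup\varphi_{\Lalg}(m)$ the pair belongs to, tracking how the coordinates can move between the layers ${}_{[z]}\!X_{\K}$, $[z]$, ${}^{[z]}\!X_{\K}$ under $\preccurlyeq_{E_{\koflbf}}$. Your only organisational difference is that you first establish $R$ itself is an up-set (bundling the paper's Cases~1--5) and then treat $\varphi_{\Lalg}(m)$ separately, whereas the paper runs through all six cases in parallel; your layer terminology and the observation that $R$ is already closed upward make the bookkeeping slightly cleaner, but the underlying argument is identical.
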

\begin{proof}
	It is immediate that $\varnothing$ and $E_{\koflbf}$
	are up-sets of $\left(E_{\koflbf}, \preccurlyeq_{E_{\koflbf}}\right)$.
	Thus, we only need to prove that $\psi(m)$ is an up-set of
	$\left(E_{\koflbf}, \preccurlyeq _{E_{\koflbf}}\right)$ for $m\in L$.
	Let $\left(u_1,v_1\right) \in \psi(m)$
	and assume 
	$\left(u_1,v_1\right)
	\preccurlyeq_{E_{\koflbf}} 
	\left(u_2,v_2\right)$.
        That is, by~(\ref{EIneq}), we have that
        $u_2\leqslant_{X_{\koflbf}} u_1$ and $v_1\leqslant_{X_{\koflbf}} v_2$.
	There are six cases to consider.

        \vskip 0.2cm
        \noindent
	\underline{Case 1:} If $(u_1,v_1)\in {}^{[z]}{\leqslant_{X_{\K}}}$ for some
        $[z]\in X_{\Lalg}/E_{\Lalg}$, then there exists $w_1,w_2\in X_{\K}$
        such that $u_1=\left(w_1\right)^{[z]}$, $v_1=\left(w_2\right)^{[z]}$ and $w_1\leqslant_{X_{\K}} w_2$.
        Moreover, $v_1\leqslant_{X_{\koflbf}} v_2$ implies that
        $v_2\in {}^{[z]}\!X_{\K}$ and there exists $w_3\in X_{\K}$
        such that $v_2 = \left(w_3\right)^{[z]}$ and $w_2\leqslant_{X_{\K}} w_3$.  On the other hand,
        $u_2\leqslant_{X_{\koflbf}} u_1$ gives rise to three subcases:
        \begin{itemize}
            \item If $u_2\in[z]\subseteq X_{\Lalg}$, 
            then $(u_2,v_2)\in [z]\times {}^{[z]}\!X_{\K}\subseteq \psi(m)$.
            \item If $u_2\in {}_{[z]}\!X_{\K}$, then $(u_2,v_2)\in {}_{[z]}\!X_{\K}\times
            {}^{[z]}\!X_{\K}\subseteq \psi(m)$.
            \item If $u_2\in {}^{[z]}\!X_{\K}$, then there exists $w\in X_{\K}$
            such that $u_2=w^{[z]}$.  Since $u_2\leqslant_{X_{\koflbf}} u_1$, it
            follows that $w\leqslant_{X_{\K}} w_1$ and we have that
            $w\leqslant_{X_{\K}} w_1\leqslant_{X_{\K}} w_2\leqslant_{X_{\K}} w_3$.
            This in turn implies that $(u_2,v_2)\in {{}^{[z]}{\leqslant_{X_{\K}}}}\subseteq \psi(m)$.
        \end{itemize}

        \vskip 0.2cm
        \noindent
        \underline{Case 2:}
        If $(u_1,v_1)\in {}_{[z]}{\leqslant_{X_\K}}$ for some
        $[z]\in X_{\Lalg}/E_{\Lalg}$, then there exists $w_1,w_2\in X_{\K}$
        such that $u_1=\left(w_1\right)_{[z]}$, $v_1=\left(w_2\right)_{[z]}$ and $w_1\leqslant_{X_\K} w_2$.
        Moreover, $u_2\leqslant_{X_{\koflbf}} u_1$ implies that $u_2\in {}_{[z]}\!X_{\K}$
        and there exists $w\in X_{\K}$ such that $u_2=w_{[z]}$ and $w\leqslant_{X_{\K}} w_1$.
        On the other hand, $v_1\leqslant_{X_{\koflbf}}v_2$ gives rise to three subcases:
        \begin{itemize}
            \item If $v_2\in [z]\subseteq X_{\Lalg}$, 
            then $(u_2,v_2)\in {}_{[z]}\!X_{\koflbf}\times [z]\subseteq \psi(m)$.
            \item If $v_2\in {}^{[z]}\!X_{\K}$, then $(u_2,v_2)\in {}_{[z]}\!X_{\K}\times
            {}^{[z]}\!X_{\K}\subseteq \psi(m)$.
            \item If $v_2\in {}_{[z]}\!X_{\K}$, then there exists $w_3\in X_{\K}$
            such that $v_2=(w_3)_{[z]}$.  Since we have $v_1\leqslant_{X_{\koflbf}} v_2$,
            it follows that $w_2\leqslant_{X_{\K}} w_3$ and we get
            $w\leqslant_{X_{\K}} w_1\leqslant_{X_{\K}} w_2\leqslant_{X_{\K}} w_3$.
            This in turn implies that $(u_2,v_2)\in {}_{[z]}{\leqslant_{X_{\K}}}\subseteq \psi(m)$.
        \end{itemize}

        \vskip 0.2cm
        \noindent
        \underline{Case 3:}
        If $(u_1,v_1)\in [z]\times {}^{[z]}\!X_{\K}$ for some
        $[z]\in X_{\Lalg}/E_{\Lalg}$, then $v_1\leqslant_{X_{\koflbf}} v_2$
        implies that $v_2\in {}^{[z]}\!X_{\K}$.  On the other hand,
        $u_2\leqslant_{X_{\koflbf}} u_1$ gives rise to two subcases:
        \begin{itemize}
            \item If $u_2\in [z]$, then $(u_2,v_2)\in [z]\times {}^{[z]}\!X_{\K}\subseteq \psi(m)$.
            \item If $u_2\in {}_{[z]}\!X_{\K}$, then $(u_2,v_2)\in {}_{[z]}\!X_{\K}\times
            {}^{[z]}\!X_{\K}\subseteq \psi(m)$.
        \end{itemize}

        \vskip 0.2cm
        \noindent
        \underline{Case 4:}
        If $(u_1,v_1)\in {}_{[z]}\!X_{\K}\times [z]$ for some
        $[z]\in X_{\Lalg}/E_{\Lalg}$, then $u_2\leqslant_{X_{\koflbf}} u_1$
        implies that $u_2\in {}_{[z]}\!X_{\K}$.  On the other hand,
        $v_1\leqslant_{X_{\koflbf}} v_2$ gives rise to two subcases:
        \begin{itemize}
            \item If $v_2\in [z]$, then $(u_2,v_2)\in {}_{[z]}\!X_{\K}\times [z]\subseteq \psi(m)$.
            \item If $v_2\in {}^{[z]}\!X_{\K}$, then $(u_2,v_2)\in {}_{[z]}\!X_{\K}\times
            {}^{[z]}\!X_{\K}\subseteq \psi(m)$.
        \end{itemize}

        \vskip 0.2cm
        \noindent
        \underline{Case 5:}
        If $(u_1,v_1)\in {}_{[z]}\!X_{\K}\times {}^{[z]}\!X_{\K}$ for some
        $[z]\in X_{\Lalg}/E_{\Lalg}$, then $u_2\leqslant_{X_{\koflbf}} u_1$
        implies that $u_2\in {}_{[z]}\!X_{\K}$ and $v_1\leqslant_{X_{\koflbf}} v_2$
        implies that $v_2\in {}^{[z]}\!X_{\K}$. Therefore, it follows that
        $(u_2,v_2)\in {}_{[z]}\!X_{\K}\times
            {}^{[z]}\!X_{\K}\subseteq \psi(m)$.

        \vskip 0.2cm
        \noindent
        \underline{Case 6:}
        If $(u_1,v_1)\in \varphi_{\Lalg}(m)$, then $u_1,v_1\in X_{\Lalg}$ and $\left(u_1, v_1\right) \in E_{\Lalg}$. The inequalities $u_2\leqslant_{X_{\koflbf}} u_1$ and
        $v_1\leqslant_{X_{\koflbf}} v_2$ combined give rise to four
        subcases:
        \begin{itemize}
            \item If $u_2\in {}_{[u_1]}\!X_{\K}$ and $v_2\in {}^{[u_1]}\!X_{\K}$,
            then $(u_2,v_2)\in {}_{[u_1]}\!X_{\K}\times{}^{[u_1]}\!X_{\K}\subseteq \psi(m)$.
            \item If $u_2\in {}_{[u_1]}\!X_{\K}$ and $v_2\in [u_1]$, then
             $(u_2,v_2)\in {}_{[u_1]}\!X_{\K}\times [u_1]\subseteq \psi(m)$.
            \item If $u_2\in [u_1]$ and $v_2\in {}^{[u_1]}\!X_{\K}$, then 
            $(u_2,v_2)\in [u_1]\times ^{[u_1]}\!X_{\K}\subseteq \psi(m)$.
            \item If $u_2\in [u_1]$ and $v_2\in [u_1]$, 
            then $u_2\leqslant_{X_{\Lalg}} u_1$ and
        $v_1\leqslant_{X_{\Lalg}} v_2$, which means $\left(u_1, v_1\right) \preccurlyeq_{E_{\Lalg}}\left(u_2, v_2\right)$.
            %then %$(v_2,v_2)\in E_{\Lalg}$.
            But $\varphi_{\Lalg}(m)$ is an upset of $(E_{\Lalg},\preccurlyeq_{E_\Lalg})$, so $(u_1,v_1)\in \varphi_{\Lalg}(m)$ implies
            $(u_2,v_2)\in \varphi_{\Lalg}(m)\subseteq \psi(m)$.\qedhere
        \end{itemize}
        \end{proof}

We now prove a series of lemmas that, when combined, show that the 
map $\psi$ defined in (\ref{PsiDef}) is a DqRA embedding.

\vskip 0.4cm 

\begin{lemma} \label{Lem:PsiInject}
The map $\psi:K\backslash\{a_0\}\cup L\to \mathsf{Up}\left(E_{\koflbf}, \preccurlyeq_{E_{\koflbf}}\right)$ is
injective and $\psi(1_{\koflbf})= {\leqslant_{X_{\koflbf}}}$.
\end{lemma}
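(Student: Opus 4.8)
The plan is to verify the two assertions separately, beginning with the simpler identity $\psi(1_{\koflbf}) = {\leqslant_{X_{\koflbf}}}$. Recall that $1_{\koflbf} = 1_{\Lalg}$, which lies in $L$, so that $\psi(1_{\koflbf}) = R \cup \varphi_{\Lalg}(1_{\Lalg})$ by~(\ref{PsiDef}). Since $\varphi_{\Lalg}$ is a DqRA embedding and the monoid identity of $\mathbf{Dq}(E_{\Lalg}, \preccurlyeq_{E_{\Lalg}})$ is ${\leqslant_{X_{\Lalg}}}$ (recall $1 = {\leqslant}$ in Theorem~\ref{thm:Dq(E)}), we have $\varphi_{\Lalg}(1_{\Lalg}) = {\leqslant_{X_{\Lalg}}}$. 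Using $R = {\leqslant_{X_{\koflbf}}} \setminus {\leqslant_{X_{\Lalg}}}$ together with ${\leqslant_{X_{\Lalg}}} \subseteq {\leqslant_{X_{\koflbf}}}$ (immediate from~(\ref{Def:IneqKofL})), we conclude $\psi(1_{\koflbf}) = ({\leqslant_{X_{\koflbf}}} \setminus {\leqslant_{X_{\Lalg}}}) \cup {\leqslant_{X_{\Lalg}}} = {\leqslant_{X_{\koflbf}}}$.

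For injectivity, the key structural observation, which I would isolate first, is that $R$ contains no pair both of whose coordinates lie in $X_{\Lalg}$; that is, $R \cap (X_{\Lalg} \times X_{\Lalg}) = \varnothing$. This follows by inspecting each of the five families of pairs in the definition~(\ref{Equation:RelationRDef}) of $R$: every such pair has at least one coordinate in some copy ${}^{[z]}\!X_{\K}$ or ${}_{[z]}\!X_{\K}$, and these are disjoint from $X_{\Lalg}$. Since $\varphi_{\Lalg}(m) \subseteq E_{\Lalg} \subseteq X_{\Lalg} \times X_{\Lalg}$ for every $m \in L$, this immediately yields the recovery formula $\varphi_{\Lalg}(m) = \psi(m) \cap (X_{\Lalg} \times X_{\Lalg})$.

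With these facts in hand I would split the verification according to the three clauses of~(\ref{PsiDef}). The values $\psi(a_{-1}) = \varnothing$ and $\psi(a_1) = E_{\koflbf}$ must be separated from each other and from every $\psi(m)$ with $m\in L$: since $R \neq \varnothing$ (for example it contains the nonempty diagonal relations ${}^{[z]}{\leqslant_{X_{\K}}}$), each $\psi(m) \supseteq R$ is nonempty, ruling out $\psi(m) = \psi(a_{-1})$; and for any block $[z]$ the pair $(x^{[z]}, y^{[z]})$ lies in $E_{\koflbf}$ but in neither $R$ nor $\varphi_{\Lalg}(m)$, so $\psi(m) \subsetneq E_{\koflbf} = \psi(a_1)$ and also $\psi(a_{-1}) \neq \psi(a_1)$. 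Finally, for $m, m' \in L$ the recovery formula shows that $\psi(m) = \psi(m')$ forces $\varphi_{\Lalg}(m) = \varphi_{\Lalg}(m')$, whence $m = m'$ by injectivity of $\varphi_{\Lalg}$.

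The argument is largely bookkeeping; the one point requiring genuine care, and which I regard as the main obstacle, is confirming $R \cap (X_{\Lalg} \times X_{\Lalg}) = \varnothing$, since it is exactly this ``no new $X_{\Lalg}$-internal pairs'' property that makes $\psi$ separate the elements of $L$ precisely as $\varphi_{\Lalg}$ does. Once that is established, the remaining distinctions reduce to exhibiting the single witness pair $(x^{[z]}, y^{[z]})$ and observing that $R$ is nonempty.
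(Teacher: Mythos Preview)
Your proof is correct and follows essentially the same approach as the paper: verify $\psi(1_{\koflbf})={\leqslant_{X_{\koflbf}}}$ from $\varphi_{\Lalg}(1_{\Lalg})={\leqslant_{X_{\Lalg}}}$ and $R={\leqslant_{X_{\koflbf}}}\setminus{\leqslant_{X_{\Lalg}}}$, then case-split for injectivity using witness pairs. Your explicit isolation of $R\cap(X_{\Lalg}\times X_{\Lalg})=\varnothing$ and the resulting recovery formula $\varphi_{\Lalg}(m)=\psi(m)\cap(X_{\Lalg}\times X_{\Lalg})$ actually fills in a step the paper leaves implicit in its Case~1 (where it jumps from $\varphi_{\Lalg}(m_1)\neq\varphi_{\Lalg}(m_2)$ directly to $\psi(m_1)\neq\psi(m_2)$); your witnesses $(x^{[z]},y^{[z]})$ differ from the paper's $(z,w_{[z]})$ and $(w_{[z]},z)$ but serve the same purpose.
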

\begin{proof}
	
	Since $\varphi_{\Lalg}(1_{\Lalg})={\leqslant_{X_\Lalg}}$, 
%ac
%we have that 
	it follows directly from the definitions of $\psi(m)$ and 
        $\leqslant_{X_{\koflbf}}$ in (\ref{PsiDef}) and (\ref{Def:IneqKofL}), respectively
        that $\psi(1_{\koflbf})= {\leqslant_{X_{\koflbf}}}$.
	
	To show that $\psi$ is injective, let $m_1,m_2\in K{\setminus}\{a_0\}\cup L$
	such that $m_1\neq m_2$. Without loss of generality we can consider four
	cases.	

        \vskip 0.2cm
	\noindent\underline{Case 1}: If $m_1,m_2\in L$, then $\varphi_{\Lalg}(m_1)\neq\varphi_{\Lalg}(m_2)$ since $\varphi_{\Lalg}$ is injective
	and therefore we get $\psi(m_1)\neq\psi(m_2)$.
	
        \vskip 0.2cm
	\noindent\underline{Case 2}: If $m_1=a_{-1}$ and $m_2\in L$, then $\psi(m_1)=\varnothing$.
        On the other hand, if we let $w\in X_{\K}$ and $z\in X_{\Lalg}$, then $(w_{[z]},z)\in\psi(m_2)$ which shows $\psi(m_2)\neq\varnothing$ and hence $\psi(m_1)\neq\psi(m_2)$.

        \vskip 0.2cm
	\noindent\underline{Case 3}:  If $m_1=a_{1}$ and $m_2\in L$, then $\psi(m_1)=E_{\koflbf}$.
        Let $w\in X_{\K}$ and $z\in X_{\Lalg}$, then $(z,w_{[z]})\in E_{\koflbf}=\psi(m_1)$,
        but by (\ref{PsiDef}) we have $\left(z,w_{[z]}\right)\notin\psi(m_2)$.  Hence, it follows that $\psi(m_1)\neq\psi(m_2)$. 
 
	\vskip 0.2cm
	\noindent\underline{Case 4}: If $m_1,m_2\in K{\setminus}\{a_0\}$, then we may         assume, without loss of generality, that $m_1=a_1$ and $m_2=a_{-1}$. Then 
        $\psi(m_1)\neq \varnothing$ since $E_{\koflbf}\neq\varnothing$ and therefore we get 
	$\psi(m_1)\neq\psi(m_2)$.		
\end{proof}

In order to prove that $\psi$ preserves the linear negations and 
% cr self involution --> involution 
the involution, we will need the following auxiliary result.

\vskip 0.4cm 

\begin{lemma}\label{Lem:EComp}
	Let $E_{\koflbf}$ be the relation defined in {\upshape (\ref{EKofLDef})} and 
	$\alpha_{\koflbf}$ and $\beta_{\koflbf}$ the maps defined in 
    {\upshape(\ref{Def:AlphaKofL})} and 
{\upshape (\ref{Def:BetaKofL})}, respectively. Then, for 
	$\gamma\in\{\alpha_{\koflbf},\beta_{\koflbf}\}$, we have
	\[
		\gamma\mathbin{;}\varnothing=\varnothing,\quad 
		\varnothing\mathbin{;}\gamma=\varnothing,\quad
		\gamma\mathbin{;}E_{\koflbf}=E_{\koflbf},\quad\text{and}\quad
		E_{\koflbf}\mathbin{;}\gamma=E_{\koflbf}.
	\]
\end{lemma}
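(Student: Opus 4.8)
The plan is to dispose of the two equalities involving $\varnothing$ at once and then treat the two involving $E_{\koflbf}$ uniformly in $\gamma$, using only that $\gamma\in\{\alpha_{\koflbf},\beta_{\koflbf}\}$ is a bijective function with $\gamma\subseteq E_{\koflbf}$ (both from Lemma~\ref{Lem:AlphaBeta}) and that $E_{\koflbf}$ is an equivalence relation. The equalities $\gamma\mathbin{;}\varnothing=\varnothing$ and $\varnothing\mathbin{;}\gamma=\varnothing$ require no hypotheses at all: they hold for every binary relation, since a relational composition with the empty relation has no witnessing middle element.

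For $\gamma\mathbin{;}E_{\koflbf}=E_{\koflbf}$ I would argue as follows. Since $\gamma$ is a total function, $(u,y)\in\gamma\mathbin{;}E_{\koflbf}$ holds if and only if $(\gamma(u),y)\in E_{\koflbf}$. Because $\gamma\subseteq E_{\koflbf}$ we have $(u,\gamma(u))\in E_{\koflbf}$, so $u$ and $\gamma(u)$ lie in the same $E_{\koflbf}$-class; symmetry and transitivity of $E_{\koflbf}$ then give $(\gamma(u),y)\in E_{\koflbf}\iff(u,y)\in E_{\koflbf}$. Hence $(u,y)\in\gamma\mathbin{;}E_{\koflbf}\iff(u,y)\in E_{\koflbf}$, which is the claim.

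The equality $E_{\koflbf}\mathbin{;}\gamma=E_{\koflbf}$ is the mirror image, and the one point needing care is that here $\gamma$ must be used as a \emph{bijection} rather than merely a function, so that preimages exist. Since $\gamma$ is an automorphism of $\mathbf{X}_{\koflbf}$, it is invertible, so $(x,v)\in E_{\koflbf}\mathbin{;}\gamma$ if and only if $(x,\gamma^{-1}(v))\in E_{\koflbf}$; as $\gamma\subseteq E_{\koflbf}$ forces $v$ and $\gamma^{-1}(v)$ into the same $E_{\koflbf}$-class, the same appeal to symmetry and transitivity yields $(x,v)\in E_{\koflbf}\mathbin{;}\gamma\iff(x,v)\in E_{\koflbf}$. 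Equivalently, both arguments can be phrased purely relationally using $\gamma^{\smile}\mathbin{;}\gamma=\gamma\mathbin{;}\gamma^{\smile}=\mathrm{id}_{X_{\koflbf}}$ together with $\gamma\subseteq E_{\koflbf}$ and $E_{\koflbf}\mathbin{;}E_{\koflbf}=E_{\koflbf}$. There is no genuine obstacle: no case analysis on the concrete shape of $X_{\koflbf}$ is needed, and the proof is identical for $\alpha_{\koflbf}$ and $\beta_{\koflbf}$.
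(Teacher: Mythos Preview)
Your proof is correct and follows essentially the same approach as the paper: both arguments rely only on $\gamma\subseteq E_{\koflbf}$ together with symmetry and transitivity of $E_{\koflbf}$, with the $\varnothing$ cases dismissed as trivial. Your presentation via biconditionals is slightly more compact than the paper's two-inclusion argument, and you are more explicit than the paper in noting that surjectivity of $\gamma$ is what makes the $E_{\koflbf}\mathbin{;}\gamma=E_{\koflbf}$ direction work, where the paper simply says it ``can be proved similarly.''
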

\begin{proof}
%ac21112024 proof shortened
%The last two equalities follow easily from  $\alpha_{\koflbf} \subseteq E_{\koflbf}$ and $\beta_{\koflbf} \subseteq E_{\koflbf}$. 
%Let $\gamma\in\{\alpha_{\koflbf},\beta_{\koflbf}\}$.
It follows from $\gamma\subseteq E_{\koflbf}$ and the transitivity of $E_{\koflbf}$ that
%It is immediate that a composition with the empty set is again the empty set. 
$\gamma\mathbin{;}E_{\koflbf}\subseteq E_{\koflbf}$.
For the inclusion in the other direction, let $(u,v)\in E_{\koflbf}$. Then $(v,u)\in E_{\koflbf}$ by symmetry and, since 
$(u,\gamma(u))\in\gamma\subseteq E_{\koflbf}$, we have that $(v,\gamma(u))\in E_{\koflbf}$ by transitivity.
%Finally, from transitivity we have $(u,v)\in E_{\koflbf}$.
This means $(\gamma(u), v) \in E_{\koflbf}$, and therefore $(u, v) \in \gamma \mathbin{;} E_{\koflbf}$. 

The first two equalities are trivially true and the final equality can be proved similarly to the above.
%\begin{align*}
%(u,v)\in\gamma;E_{\koflbf}
%		&\iff (u,\gamma(u))\in\gamma
%		\subseteq E_{\koflbf}\text{ and } 
%(\gamma(u),v)\in E_{\koflbf}\\
%		&\iff (u,v)\in E_{\koflbf}
%	\end{align*}
%	and
%	\begin{align*}
%		(u,v)\in E_{\koflbf};\gamma
%		&\iff (u,\gamma^{-1}(v))\in E_{\koflbf}\text{ and } 
%		(\gamma^{-1}(v),v)\in
%		\gamma\subseteq E_{\koflbf}\\
%		&\iff (u,v)\in E_{\koflbf}
%	\end{align*}
\end{proof}

\begin{lemma}\label{Lem:PresLinNeg}
	The map $\psi:K{\setminus}\{a_0\}\cup L\to \mathsf{Up}\left(E_{\koflbf}, \preccurlyeq_{E_{\koflbf}}\right)$
	preserves the linear negations $\sim_{\koflbf}$ and ${-}_{\koflbf}$, i.e., for 
	$m\in K\backslash\{a_0\}\cup L$,
	\[\psi(-_{\koflbf}m)=-\psi(m)\quad\text{and}\quad\psi({\sim}_{\koflbf} m)={\sim}\psi(m).\]
\end{lemma}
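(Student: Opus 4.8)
The plan is to prove the identity for ${\sim}_{\koflbf}$ in detail; the argument for ${-}_{\koflbf}$ is entirely symmetric (one composes with $\alpha_{\koflbf}$ on the left rather than the right), so I would only remark on it at the end. First I would invoke Lemma~\ref{Lemma:NegationBehaviour}, which says that ${\sim}_{\koflbf}$ restricts to ${\sim}_{\K}$ on $K\backslash\{a_0\}$ and to ${\sim}_{\Lalg}$ on $L$. This splits the argument into the two cases $m\in\{a_{-1},a_1\}$ and $m\in L$.

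For the elements of $\K$ I would argue directly. Since ${\sim}_{\K}a_{-1}=a_1$ and ${\sim}_{\K}a_1=a_{-1}$, while $\psi(a_{-1})=\varnothing$ and $\psi(a_1)=E_{\koflbf}$, the claim reduces to checking that ${\sim}\varnothing=E_{\koflbf}$ and ${\sim}E_{\koflbf}=\varnothing$ in $\mathbf{Dq}(E_{\koflbf})$. Using ${\sim}R=R^{c\smile}\mathbin{;}\alpha_{\koflbf}$ together with $\varnothing^c=E_{\koflbf}$, $E_{\koflbf}^c=\varnothing$ (complements taken in $E_{\koflbf}$), the symmetry of $E_{\koflbf}$, and Lemma~\ref{Lem:EComp} (which gives $E_{\koflbf}\mathbin{;}\alpha_{\koflbf}=E_{\koflbf}$ and $\varnothing\mathbin{;}\alpha_{\koflbf}=\varnothing$), both equalities fall out at once.

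The substantive case is $m\in L$, where $\psi(m)=R\cup\varphi_{\Lalg}(m)$ and I must show $\psi(m)^{c\smile}\mathbin{;}\alpha_{\koflbf}=R\cup\varphi_{\Lalg}({\sim}_{\Lalg}m)$, the right-hand side being $\psi({\sim}_{\Lalg}m)$. Since $\varphi_{\Lalg}$ is a DqRA embedding it preserves the linear negation, so $\varphi_{\Lalg}({\sim}_{\Lalg}m)=\varphi_{\Lalg}(m)^{c_\Lalg\smile}\mathbin{;}\alpha_{\Lalg}$, where $c_\Lalg$ denotes complementation in $E_{\Lalg}$. The computation then proceeds block by block: every pair of $E_{\koflbf}$ lies in a single block $[z]\cup{}^{[z]}\!X_{\K}\cup{}_{[z]}\!X_{\K}$, so I would classify pairs according to which of these three parts their two coordinates lie in (nine types). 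Observing that the only pairs of $\psi(m)$ with both coordinates in $[z]$ come from $\varphi_{\Lalg}(m)$ (as $R$ contains no pair inside $X_{\Lalg}\times X_{\Lalg}$), I would read off $\psi(m)^c$, take its converse, and finally post-compose with $\alpha_{\koflbf}$, which amounts to applying $\alpha_{\koflbf}$—hence the swap induced by $\alpha_{\K}$ on the $\K$-copies—to the second coordinate.

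The observation that makes everything collapse is that $\alpha_{\K}$ sends each off-diagonal pair of a $\K$-copy to a diagonal pair and vice versa: the contributions of $\psi(m)^{c\smile}\mathbin{;}\alpha_{\koflbf}$ that involve at least one $\K$-copy coordinate reassemble into precisely the five unions defining $R$ in~(\ref{Equation:RelationRDef}), while the one remaining contribution, lying entirely in $[z]\times[z]$, is exactly $\varphi_{\Lalg}(m)^{c_\Lalg\smile}\mathbin{;}\alpha_{\Lalg}=\varphi_{\Lalg}({\sim}_{\Lalg}m)$. Taking the union over all blocks $[z]$ yields $R\cup\varphi_{\Lalg}({\sim}_{\Lalg}m)$, as required. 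The main obstacle is the bookkeeping in this nine-type case analysis—in particular verifying that $R$ absorbs all of the $\K$-copy contributions and that $\alpha_{\koflbf}$ respects the $E_{\Lalg}$-blocks (so that $\alpha_{\Lalg}([z])=[z]$). Once this is in place, the ${-}_{\koflbf}$ case follows by the mirror-image argument applied to the first coordinate.
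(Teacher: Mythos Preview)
Your proposal is correct and follows essentially the same strategy as the paper: invoke Lemma~\ref{Lemma:NegationBehaviour} to split into the $\K$- and $\Lalg$-cases, dispatch the former via Lemma~\ref{Lem:EComp}, and handle $m\in L$ by a case analysis over the nine position-types within a block $[z]\cup{}^{[z]}\!X_{\K}\cup{}_{[z]}\!X_{\K}$. The only differences are cosmetic---the paper argues for $-_{\koflbf}$ and organizes Case~2 as a double inclusion (six sub-cases for one direction, then the reverse), whereas you compute ${\sim}\psi(m)$ directly; the underlying case structure and the key observation (that $\alpha_{\K}$ swaps the diagonal and off-diagonal pairs in each $\K$-copy) are identical.
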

\begin{proof}
	We prove the claim for $-_{\koflbf}$.  The result for $\sim_{\koflbf}$ can be
	proved analogously.  If $m\in K{\setminus}\{a_0\}\cup L$,
        then $m\in K{\setminus}\{a_0\}$ or $m\in L$.

        \vskip 0.2cm
	\noindent\underline{Case 1}:
	If $m\in K{\setminus}\{a_0\}$, then $m=a_{-1}$ and $\psi(m)=\varnothing$, or $m=a_1$ and $\psi(m)=E_{\koflbf}$.
	Note that $E_{\koflbf}^c=\varnothing$ hence $(E_{\koflbf}^c)^\smile=\varnothing^\smile=\varnothing$. 
	Also $\varnothing^c=E_{\koflbf}$ hence
	$(\varnothing^c)^\smile=E_{\koflbf}^\smile=E_{\koflbf}$ since
	$E_{\koflbf}$ is symmetric.
	By Lemmas~\ref{Lemma:NegationBehaviour} and~\ref{Lem:EComp}
	%and~\ref{lem:twiddle_minus_alternative} 
    we have	
    %\marginpar{add reference to lemma about definition of - in the representation}
\begin{align*}
\psi(-_{\koflbf}a_1)=\psi(-_{\K}a_1)=\psi(a_{-1})=\varnothing
		=\alpha_{\koflbf}\mathbin{;} \varnothing
		&=\alpha_{\koflbf}\mathbin{;}(E_{\koflbf}^c)^\smile\\&=-E_{\koflbf}=-\psi(a_1)
	\end{align*}
	and
	\begin{align*}
		\psi(-_{\koflbf}a_{-1})=\psi(-_{\K}a_{-1})=\psi(a_1)=E_{\koflbf}
		= \alpha_{\koflbf}\mathbin{;}E_{\koflbf}
		&=\alpha_{\koflbf}\mathbin{;}(\varnothing^c)^\smile\\
		& ={-}\varnothing=-\psi(a_{-1}).
	\end{align*}

%$$-\psi(m) = \alpha ; [\psi(m)]^{c \smile} = (\alpha ;  (..... \cup \varphi_L(m)))^{c \smile}$$

%$$\psi(-m) = ..... \cup \varphi_L(-m) =.... \cup -\varphi_L(m) = .... \cup (\alpha ; \varphi_L(m)^{c \smile})$$

        \vskip 0.2cm
	\noindent\underline{Case 2}:
	Let $m \in L$. To prove that 
        $\psi({-}_{\koflbf}m)=-\psi(m)$ we first
        show that $\psi({-}_{\koflbf}m)\subseteq-\psi(m)$.
        Let $(u,v)\in\psi({-}_{\koflbf} m)$.
        Then, by~(\ref{PsiDef}) there are six sub-cases to consider.
        \begin{itemize}
            \item
            Let $(u,v)\in {}^{[z]}{\leqslant_{X_{\K}}}$
            for some $[z]\in X_{\Lalg}/E_{\Lalg}$.
            Since $X_{\K}$ is a two-element antichain this
            implies that $u=v$ and there exists $w_1\in X_{\K}$
            such that $u=(w_1)^{[z]}=v$. Furthermore,
            $\alpha_{\koflbf}(u)=\left(\alpha_{\K}(w_1)\right)^{[z]}
            =\left(w_2\right)^{[z]}$ where $w_2\in X_{\K}$
            such that $w_2\neq w_1$. Then,
            %cr fixed proof, everything has to switch.
\begin{eqnarray*}
& &\,(w_1,w_2) \in E_\K \textnormal{ and } (w_1,w_2)\notin {\leqslant_{X_{\K}}}\\
& \Rightarrow &
\left(\left(w_1\right)^{[z]},
\left(w_2\right)^{[z]}\right) \in E_{\koflbf}
                \textnormal{ and } \left(\left(w_1\right)^{[z]},
\left(w_2\right)^{[z]}\right)\notin {}^{[z]}{\leqslant_{X_{\K}}}\\
& \Rightarrow & \left((w_1)^{[z]},
\left(\alpha_{\K}\left(w_1\right)\right)^{[z]}\right) \in E_{\koflbf} \textnormal{ and }  \left((w_1)^{[z]}, \left(\alpha_{\K}\left(w_1\right)\right)^{[z]}\right)\notin \psi(m)\\
                & \Rightarrow&
                \left(v, \alpha_{\koflbf}(u)\right)\in \psi(m)^c\\
                & \Rightarrow&
                \left( \alpha_{\koflbf}(u), v\right)\in \left(\psi(m)^c\right)^{\smallsmile}\\
& \Rightarrow &  \,(u,v)\in\alpha_{\koflbf}\mathbin{;}\left(\psi(m)^c\right)^{\smile} =-\psi(m).
            \end{eqnarray*}

            \item Let $(u,v)\in {}_{[z]}{\leqslant_{X_{\K}}}$
            for some $[z]\in X_{\Lalg}/E_{\Lalg}$. Then the proof
            that $(u,v)\in -\psi(m)$ is similar to the proof of 
            the previous subcase.

            \item  Let $(u,v)\in [z]\times {}^{[z]}\!X_{\K}$
            for some $[z]\in X_{\Lalg}/E_{\Lalg}$. Then
            $\alpha_{\koflbf}(u)=\alpha_{\Lalg}(u)$ and
            $(u,\alpha_{\Lalg}(u))\in \alpha_{\Lalg}\subseteq
            \alpha_{\koflbf}\subseteq E_{\koflbf}$.  That is,
            $\alpha_{\Lalg}(u)\in [z]$.  Hence,
            \begin{align*}
                \left(v,\alpha_{\Lalg}(u)\right) \in E_{\koflbf} \textnormal{ and } \left(v,\alpha_{\Lalg}(u)\right)\notin \psi(m)
                &\Rightarrow
                \left(v,\alpha_{\koflbf}(u)\right)\in\psi(m)^c\\
                &\Rightarrow
                \left(\alpha_{\koflbf}(u),v\right)\in\left(\psi(m)^c\right)^{\smile}\\
                &\Rightarrow
                (u,v)\in\alpha_{\koflbf}\mathbin{;}\left(\psi(m)^c\right)^{\smile}
                =-\psi(m).
            \end{align*}

            \item  Let $(u,v)\in {}_{[z]}\!X_{\K}\times [z]$
            for some $[z]\in X_{\Lalg}/E_{\Lalg}$. Then there
            exists $w\in X_{\K}$ such that $u=w_{[z]}$ and
            $\alpha_{\koflbf}(u)=\left(\alpha_{\K}(w)\right)_{[z]}$.
            Hence, 
            \begin{eqnarray*}
                & &(v,\left(\alpha_{\K}(w)\right)_{[z]}) \in E_{\koflbf} \textnormal{ and } (v,\left(\alpha_{\K}(w)\right)_{[z]})\notin
                \psi(m)\\
                & \Rightarrow &
                (v,\alpha_{\koflbf}(u))\in \psi(m)^c\\
                & \Rightarrow &
                (\alpha_{\koflbf}(u),v)\in \left(\psi(m)^c\right)^{\smile}\\
                & \Rightarrow &
                (u,v)\in\alpha_{\koflbf}\mathbin{;}\left(\psi(m)^c\right)^{\smile}
                =-\psi(m).
            \end{eqnarray*}

            \item Let $(u,v)\in {}_{[z]}\!X_{\K}\times {}^{[z]}\!X_{\K}$
            for some $[z]\in X_{\Lalg}/E_{\Lalg}$.
            The proof
            that $(u,v)\in -\psi(m)$ is similar to the proof of 
            the previous subcase.

            \item Let $(u,v)\in \varphi_{\Lalg}({-}_{\koflbf} m)
            =\varphi_{\Lalg}({-}_{\Lalg}m)$.  Since $\varphi_{\Lalg}$
            is an embedding that preserves the linear negations, 
            it follows that 
            % \[
            %\begin{array}{ll}        
            \begin{eqnarray*}
            & & (u,v)\in{-}\varphi_{\Lalg}(m)
            =\alpha_{\Lalg}\mathbin{;}\left(\varphi_{\Lalg}(m)^c\right)^{\smile} \\
            & \Rightarrow &
            (\alpha_{\Lalg}(u),v)\in \left(\varphi_{\Lalg}(m)^c\right)^{\smile} \\
            & \Rightarrow &
            (v,\alpha_{\Lalg}(u))\in\varphi_{\Lalg}(m)^c\\
            & \Rightarrow &
            (v,\alpha_{\Lalg}(u)) \in E_{\Lalg} \textnormal{ and }(v,\alpha_{\Lalg}(u))\notin\varphi_{\Lalg}(m) \\
            & \Rightarrow &
            (v,\alpha_{\Lalg}(u)) \in E_{\koflbf} \textnormal{ and } (v,\alpha_{\koflbf}(u))\notin\psi(m)\\
            & \Rightarrow &
            (v,\alpha_{\koflbf}(u))\in \psi(m)^c \\
            & \Rightarrow &
            (\alpha_{\koflbf}(u),v)\in\left(\psi(m)^c\right)^{\smile}\\
            & \Rightarrow &
            (u,v)\in\alpha_{\koflbf}\mathbin{;}\left(\psi(m)^c\right)^{\smile}
            =-\psi(m).
            \end{eqnarray*}
           % \end{array}\]
        \end{itemize}
        
	For the inclusion in the other direction, let 
	$(u,v)\in-\psi(m)=\alpha_{\koflbf}\mathbin{;}\left(\psi(m)^c\right)^{\smile}$.
    Then,
        \begin{align*}
        (u,v)\in \alpha_{\koflbf}\mathbin{;}\left(\psi(m)^c\right)^\smile 
        &\Rightarrow
        (\alpha_{\koflbf}(u),v)\in \left(\psi(m)^c\right)^{\smile}\\
         &\Rightarrow
         (v,\alpha_{\koflbf}(u))\in \psi(m)^c\\
         &\Rightarrow
         \left(v,\alpha_{\koflbf}(u)\right) \in E_{\koflbf} \textnormal{ and } \left(v,\alpha_{\koflbf}(u)\right)\notin
         \psi(m).
    \end{align*} 
    %    Furthermore, since $\alpha_{\koflbf};\left(\psi(m)^c\right)^\smile
    Since $\alpha_{\koflbf}\subseteq
    E_{\koflbf}$ and $E_{\koflbf}$ is symmetric and transitive, %Lemma~\ref{}\marginpar{Include reference
    %to the correct Lemma from the representation section here.}
    we get $(u, v) \in E_{\koflbf}$. Hence, 
    there exists $[z]\in X_{\Lalg}/E_{\Lalg}$
    such that $(u,v)\in \left([z]\cup {}^{[z]}\!X_{\K}\cup
    {}_{[z]}\!X_{\K}\right)^2$ by~(\ref{EKofLDef}).
    
    Suppose $(u,v)\in {}^{[z]}\!X_{\K}\times [z]$.
    Then $v\in[z]$ and there exists $w\in X_{\K}$ such
    that $u=w^{[z]}$ and $\alpha_{\koflbf}(u) =\left(\alpha_{\K}(w)\right)^{[z]}\in{}^{[z]}\!X_{\K}$.  But then
    $\left(v,\alpha_{\koflbf}(u)\right)\notin
         \psi(m)$ means $\left(v,\left(\alpha_{\K}(w)\right)^{[z]}\right)\notin \psi(m)$,
    which contradicts the fact that $[z]\times {}^{[z]}\!X_{\K}
    \subseteq \psi(m)$ for any $m\in L$ as per
    the definition in~(\ref{PsiDef}).  Hence, $-\psi(m)$
    does not contain any elements of ${}^{[z]}\!X_{\K}\times [z]$.
    Similarly, we can show that $-\psi(m)$ also
    does not contain any elements of ${}^{[z]}\!X_{\K}\times {}_{[z]}\!X_{\K}$ nor of $[z]\times {}_{[z]}\!X_{\K}$.

    If $(u,v)\in [z]\times {}^{[z]}\!X_{\K}$
    or $(u,v)\in {}_{[z]}\!X_{\K}\times [z]$
    or $(u,v)\in {}_{[z]}\!X_{\K}\times {}^{[z]}\!X_{\K}$,
    it follows immediately from the definition of
    $\psi$ in~(\ref{PsiDef}) that $(u,v)\in\psi({-}_{\koflbf}m)$.

    There are three subcases left to consider.
    \begin{itemize}
        \item Let $(u,v)\in {}^{[z]}\!X_{\K}\times {}^{[z]}\!X_{\K}$.
        Then there exist $w_1,w_2\in X_{\K}$ such that 
        $u=(w_1)^{[z]}$ and $v=(w_2)^{[z]}$.  Since $X_{\K}$
        is a two-element antichain, either $w_1\neq w_2$ and
        $\alpha_{\koflbf}(u)=\left(\alpha_{\K}(w_1)\right)^{[z]}=(w_2)^{[z]}=v$, or $w_1=w_2$.
        If $w_1\neq w_2$, then $\left(v,\alpha_{\koflbf}(u)\right)\notin
         \psi(m)$ means that $\left(v, \left(\alpha_{\K}(w_1)\right)^{[z]}\right)=((w_2)^{[z]},(w_2)^{[z]})\notin 
         \psi(m)$, which contradicts that ${{}^{[z]}{\leqslant_{X_{\K}}}}
         \subseteq \psi(m)$ for all $m\in L$.
         Therefore, it must be the case that $w_1=w_2$ and 
         $u=v$, and hence 
         $(u,v)\in {{}^{[z]}{\leqslant_{X_{\K}}}}\subseteq \psi({-}_{\koflbf}m)$.

         \item Let $(u,v)\in {}_{[z]}\!X_{\K}\times {}_{[z]}\!X_{\K}$.
         We can use a similar argument to the one in the
         previous subcase to conclude that $u=v$ and hence that
         $(u,v)\in{{}_{[z]}{\leqslant_{X_{\K}}}}\subseteq \psi({-}_{\koflbf}m)$.

         \item Let $(u,v)\in [z]\times [z]$. Then, $\left(v,\alpha_{\Lalg}(u)\right) \in E_\Lalg$, and 
         %\[
         %\begin{array}{cll}
            $\left(v,\alpha_{\koflbf}(u)\right)\notin
         \psi(m)$ 
         %& \Rightarrow
         implies
         $(v,\alpha_{\Lalg}(u))\notin \varphi_{\Lalg}(m)$.
         %& \Rightarrow
         Hence, 
         $(v,\alpha_{\Lalg}(u))\in\varphi_{\Lalg}(m)^c$,
         %\\
         %\Rightarrow
         and so $(\alpha_{\Lalg}(u),v)\in \left(\varphi_{\Lalg}(m)^c\right)^{\smile}$,
         %&\Rightarrow
         which gives $(u,v)\in\alpha_{\Lalg}\mathbin{;}\left(\varphi_{\Lalg}(m)^c\right)^{\smile}$.
         %\end{array}
         %\]
         That is, $(u,v)\in {-}\varphi_{\Lalg}(m)= \varphi_{\Lalg}({-}_{\Lalg}m)=\varphi_{\Lalg}({-}_{\koflbf}m)
         \subseteq\psi({-}_{\koflbf} m)$ 
         by Lemma~\ref{Lemma:NegationBehaviour} and since $\varphi_{\Lalg}$ is an embedding that preserves the
         linear negations. \qedhere
         
    \end{itemize}
    	
\end{proof}

\begin{lemma}\label{Lem:PresSelfInvol}
	The map $\psi:K{\setminus}\{a_0\}\cup L\to \mathsf{Up}\left(E_{\koflbf}, \preccurlyeq_{E_{\koflbf}}\right)$
	preserves the 
%ac self involution 
involution
$\neg_{\koflbf}$, i.e., for 
	$m\in K{\setminus}\{a_0\}\cup L$, $\psi(\neg_{\koflbf}m)=\neg\psi(m)$.
\end{lemma}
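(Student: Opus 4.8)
The plan is to reuse the two-case skeleton of the proof of Lemma~\ref{Lem:PresLinNeg}, now feeding in the formula $\neg R=\alpha_{\koflbf}\mathbin{;}\beta_{\koflbf}\mathbin{;} R^c\mathbin{;}\beta_{\koflbf}$ supplied by Theorem~\ref{thm:Dq(E)}. The first step is a pointwise reformulation: since $\alpha_{\koflbf}$ and $\beta_{\koflbf}$ are graphs of bijections and $\beta_{\koflbf}$ is self-inverse (Lemma~\ref{Lem:AlphaBeta}), for every $m$ and every $(u,v)$ we have $(u,v)\in\neg\psi(m)$ if and only if $(u,v)\in E_{\koflbf}$ and $\bigl(\beta_{\koflbf}(\alpha_{\koflbf}(u)),\beta_{\koflbf}(v)\bigr)\notin\psi(m)$; as $\alpha_{\koflbf},\beta_{\koflbf}\subseteq E_{\koflbf}$ are block-preserving, the shifted pair lies in $E_{\koflbf}$ exactly when $(u,v)$ does, so membership is decided entirely by the second condition. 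For Case~1, where $m\in K{\setminus}\{a_0\}=\{a_{-1},a_1\}$ and $\neg_{\koflbf}m={\sim}_\K m$ interchanges $a_{\pm 1}$, I would argue exactly as in Case~1 of Lemma~\ref{Lem:PresLinNeg}: using $\varnothing^c=E_{\koflbf}$, $E_{\koflbf}^c=\varnothing$ and the four identities of Lemma~\ref{Lem:EComp}, one computes $\neg\varnothing=\alpha_{\koflbf}\mathbin{;}\beta_{\koflbf}\mathbin{;} E_{\koflbf}\mathbin{;}\beta_{\koflbf}=E_{\koflbf}=\psi(\neg_{\koflbf}a_{-1})$ and dually $\neg E_{\koflbf}=\varnothing=\psi(\neg_{\koflbf}a_1)$.

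The substance is Case~2, $m\in L$, where the target is $\psi(\neg_{\koflbf}m)=R\cup\varphi_\Lalg(\neg_\Lalg m)$. I would first record the block-local behaviour of the two maps: because $\alpha_\Lalg,\beta_\Lalg\subseteq E_\Lalg$, both $\alpha_{\koflbf}$ and $\beta_{\koflbf}$ fix each block $[z]\cup{}^{[z]}\!X_{\K}\cup{}_{[z]}\!X_{\K}$ setwise, with $\alpha_{\koflbf}$ acting as $\alpha_\Lalg$ on $[z]$ and as the swap $\alpha_\K$ on each copy of $X_\K$, and $\beta_{\koflbf}$ acting as $\beta_\Lalg$ on $[z]$ and interchanging ${}^{[z]}\!X_{\K}$ with ${}_{[z]}\!X_{\K}$ (since $\beta_\K=\mathrm{id}$). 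Hence $u\mapsto\beta_{\koflbf}(\alpha_{\koflbf}(u))$ fixes $[z]$ and swaps the two copies of $X_\K$, block by block.

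I would then split $E_{\koflbf}$ into the \emph{$\Lalg$-part} (both coordinates in $X_\Lalg$) and the \emph{scaffolding} (at least one coordinate in a copy of $X_\K$). On the $\Lalg$-part the $R$-summand is irrelevant and $E_{\koflbf}$ restricts to $E_\Lalg$, so the pointwise test collapses to $\bigl(\beta_\Lalg(\alpha_\Lalg(u)),\beta_\Lalg(v)\bigr)\notin\varphi_\Lalg(m)$, which is precisely $(u,v)\in\alpha_\Lalg\mathbin{;}\beta_\Lalg\mathbin{;}\varphi_\Lalg(m)^c\mathbin{;}\beta_\Lalg=\varphi_\Lalg(\neg_\Lalg m)$, the last equality because $\varphi_\Lalg$ is a DqRA embedding into $\mathbf{Dq}(E_\Lalg)$; thus $\neg\psi(m)\cap(X_\Lalg\times X_\Lalg)=\varphi_\Lalg(\neg_\Lalg m)$. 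On the scaffolding I would note that the shifted pair $\bigl(\beta_{\koflbf}(\alpha_{\koflbf}(u)),\beta_{\koflbf}(v)\bigr)$ never falls in $[z]\times[z]$, so its membership in $\psi(m)$ is independent of $m$ and amounts to membership in $R$; it then remains to verify, across the pair-types comprising the scaffolding, that $(u,v)\in R$ if and only if the shifted pair lies outside $R$. Together these give $\neg\psi(m)=R\cup\varphi_\Lalg(\neg_\Lalg m)=\psi(\neg_{\koflbf}m)$.

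The main obstacle is this last scaffolding bookkeeping: one must follow how $\beta_{\koflbf}\circ\alpha_{\koflbf}$ and $\beta_{\koflbf}$ shuttle each pair-type among $[z]$, ${}^{[z]}\!X_{\K}$ and ${}_{[z]}\!X_{\K}$, and confirm that the deliberately asymmetric membership pattern of $R$ (it contains $[z]\times{}^{[z]}\!X_{\K}$, ${}_{[z]}\!X_{\K}\times[z]$ and ${}_{[z]}\!X_{\K}\times{}^{[z]}\!X_{\K}$ but none of their converses, and only the diagonals inside ${}^{[z]}\!X_{\K}$ and ${}_{[z]}\!X_{\K}$) is exactly reversed by $\neg$. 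What keeps this manageable is that the computation is block-local (thanks to $\alpha_\Lalg,\beta_\Lalg\subseteq E_\Lalg$) and $m$-independent on the scaffolding, so the finitely many pair-types need only be checked once.
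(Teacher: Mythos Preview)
Your proposal is correct and, at the level of the underlying computations, coincides with the paper's proof: both rely on the pointwise description $(u,v)\in\neg\psi(m)\iff(u,v)\in E_{\koflbf}$ and $(\beta_{\koflbf}(\alpha_{\koflbf}(u)),\beta_{\koflbf}(v))\notin\psi(m)$, handle $m\in\{a_{-1},a_1\}$ via Lemma~\ref{Lem:EComp}, and for $m\in L$ carry out a block-local case analysis over the nine pair-types in $[z]\cup{}^{[z]}\!X_\K\cup{}_{[z]}\!X_\K$.

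The organization differs. The paper proves the two inclusions separately, running through the six pieces of $\psi(\neg_{\koflbf}m)$ for one direction and then through the possible locations of $(u,v)$ for the other. You instead split $E_{\koflbf}$ into the $\Lalg$-part and the scaffolding and treat the biconditional in one pass; your observation that on the scaffolding the shifted pair never lands in $[z]\times[z]$, so membership in $\psi(m)$ reduces to membership in $R$ and the whole verification becomes $m$-independent, is a genuine simplification the paper does not isolate. What this buys you is that the scaffolding check becomes a single finite table (your eight pair-types) done once, rather than being threaded through both halves of a double inclusion; the paper's version is more explicit but correspondingly longer.
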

\begin{proof}
Recall from Theorem~\ref{thm:Dq(E)} that
$\neg\psi(m)=\alpha_{\koflbf}\mathbin{;}\beta_{\koflbf}\mathbin{;} \psi(m)^c\mathbin{;}\beta_{\koflbf}$. 
Now let $m\in K{\setminus}\{a_0\}\cup L$.  
%ac not needed 
%Then  $m\in K{\setminus}\{a_0\}$ or $m\in L$.

     \vskip 0.2cm
	\noindent\underline{Case 1}:
	If $m\in K\backslash\{a_0\}$ there are two subcases to consider:
	\begin{itemize}
	   \item If $m=a_{-1}$, then $\psi(m)=\varnothing$
        and $\neg_{\koflbf}m=\neg_{\K}m=a_1$.  Thus, $\psi(\neg_{\koflbf}m)=E_{\koflbf}$.
		By Lemma~\ref{Lem:EComp},
\begin{align*}	\neg\psi(m)=\alpha_{\koflbf}\mathbin{;}\beta_{\koflbf}\mathbin{;}\varnothing^c\mathbin{;}\beta_{\koflbf}
=\alpha_{\koflbf}\mathbin{;}\beta_{\koflbf}\mathbin{;}E_{\koflbf}\mathbin{;}\beta_{\koflbf}
&=E_{\koflbf}\\&=\psi(\neg_{\koflbf}m).
\end{align*}
			
		\item If $m=a_1$, then $\psi(m)=E_{\koflbf}$.  Also, 
        $\neg_{\K[\Lalg]}m=\neg_\K m=a_{-1}$  
		and $\psi(\neg_{\koflbf}m)=\varnothing$.
		Using Lemma~\ref{Lem:EComp} again we have
		\begin{align*}
			\neg\psi(m)&=\alpha_{\koflbf}\mathbin{;}\beta_{\koflbf}\mathbin{;}E_{\koflbf}^c\mathbin{;}\beta_{\koflbf}
			=\alpha_{\koflbf}\mathbin{;}\beta_{\koflbf}\mathbin{;}\varnothing\mathbin{;}\beta_{\koflbf}
			=\varnothing=\psi(\neg_{\koflbf}m).
		\end{align*}
    \end{itemize}

    \vskip 0.2cm
	\noindent\underline{Case 2}:
	Next assume that $m\in L$.  Then
    $\neg_{\koflbf}m=\neg_{\Lalg}m$ by definition.
    We first show that
    $\psi(\neg_{\koflbf}m)\subseteq\neg\psi(m)$.
    Let $(u,v)\in \psi(\neg_{\koflbf}m)=\psi(\neg_{\Lalg}m)$. We consider the six cases that arise
    from~(\ref{PsiDef}).
    \begin{itemize}
    \item If $(u,v)\in {}^{[z]}{\leqslant_{X_{\K}}}$
    for some $[z]\in X_{\Lalg}/E_{\Lalg}$, then
    there exists $w_1\in X_{\K}$ such that $u=(w_1)^{[z]}=v$
    since $\mathbf{X}_{\K}$ is a two-element antichain.
    Hence $\alpha_{\koflbf}(u)=\left(\alpha_{\K}(w_1)\right)^{[z]}=(w_2)^{[z]}$ where $w_2\in X_{\K}$
    such that $w_1\neq w_2$.  Furthermore, 
    $\beta_{\koflbf}(\alpha_{\koflbf}(u))=\beta_{\koflbf}\left((w_2)^{[z]}\right)
    =\left(\beta_{\K}(w_2)\right)_{[z]}=(w_2)_{[z]}$, and so
     $(u,(w_2)_{[z]})\in \alpha_{\koflbf}\mathbin{;}\beta_{\koflbf}$. Next note that since $(w_2,w_1)\notin
    {\leqslant_{X_{\K}}}$, it follows from~(\ref{PsiDef})
    that $\left((w_2)_{[z]},(w_1)_{[z]}\right)\notin
    \psi(m)$ and hence, since $\left((w_2)_{[z]},(w_1)_{[z]}\right) \in E_{\koflbf}$ we get
    $\left((w_2)_{[z]},(w_1)_{[z]}\right)\in\psi(m)^c$.
    Combined with the above, we have that
    $\left(u,(w_1)_{[z]}\right)\in \alpha_{\koflbf}\mathbin{;}\beta_{\koflbf}\mathbin{;}\psi(m)^c$.  Finally,
    since we have $\left((w_1)_{[z]},(w_1)^{[z]}\right)\in\beta_{\koflbf}$
    by~(\ref{Def:BetaKofL}), it follows that
    $(u,(w_1)^{[z]})=(u,v)\in \alpha_{\koflbf}\mathbin{;}\beta_{\koflbf}\mathbin{;}\psi(m)^c\mathbin{;}\beta_{\koflbf}=\neg\psi(m)$.

    \vskip 0.1cm
    \item If $(u,v)\in {}_{[z]}{\leqslant_{X_{\K}}}$
    for some $[z]\in X_{\Lalg}/E_{\Lalg}$, then
    the proof is similar to the previous case.

    \vskip 0.1cm
    \item If $(u,v)\in [z]\times {}^{[z]}\!X_{\K}$
    for some $[z]\in X_{\Lalg}/E_{\Lalg}$,
    then there exists $w\in X_{\K}$ such that
    $v=w^{[z]}$.  Also,
    %since 
    %$\alpha_{\Lalg}\subseteq E_{\Lalg}$ and
    %$\beta_{\Lalg}\subseteq E_{\Lalg}$,
    $\alpha_{\koflbf}(u)=\alpha_{\Lalg}(u)$
    %\in [z]$
    and $\beta_{\koflbf}\left(\alpha_{\Lalg}(u)\right)
    =\beta_{\Lalg}\left(\alpha_{\Lalg}(u)\right)$
    %\in [z]$
    which implies that 
    $\left(u, \beta_{\Lalg}\left(\alpha_{\Lalg}(u)\right)\right)\in\alpha_{\koflbf}\mathbin{;}\beta_{\koflbf}$.
    Next, by~(\ref{PsiDef}),
    $\left(\beta_{\Lalg}\left(\alpha_{\Lalg}(u)\right),w_{[z]}\right)\notin \psi(m)$, so
    $\left(\beta_{\Lalg}\left(\alpha_{\Lalg}(u)\right),w_{[z]}\right)\in \psi(m)^c$.
    Consequently, $\left(u, w_{[z]}\right)\in \alpha_{\koflbf}\mathbin{;}\beta_{\koflbf}\mathbin{;}\psi(m)^c$.
    Since $\left(w_{[z]},w^{[z]}\right)\in
    \beta_{\koflbf}$, it follows that
    $\left(u,w^{[z]}\right)=(u,v)\in \alpha_{\koflbf}\mathbin{;}\beta_{\koflbf}\mathbin{;}\psi(m)^c\mathbin{;}\beta_{\koflbf}=\neg\psi(m)$.

    \vskip 0.1cm
    \item If $(u,v)\in {}_{[z]}X_{\K}\times [z]$
    for some $[z]\in X_{\Lalg}/E_{\Lalg}$,
    then there exists $w_1\in X_{\K}$ such that
    $u=(w_1)_{[z]}$. Hence, $\alpha_{\koflbf}(u)=
    \left(\alpha_{\K}(w_1)\right)_{[z]}=(w_2)_{[z]}$
    where $w_2\in X_{\K}$ such that $w_1\neq w_2$.
    Furthermore, $\beta_{\koflbf}((w_2)_{[z]})=\left(\beta_{\K}\left(w_2\right)\right)^{[z]}=(w_2)^{[z]}$.
    Thus, $(u,(w_2)^{[z]})\in\alpha_{\koflbf}\mathbin{;}\beta_{\koflbf}$. Next, as $\beta_{\Lalg}(v)\in X_\Lalg$ since $v\in [z]$, we have 
    $\left((w_2)^{[z]},\beta_{\Lalg}(v)\right)\notin\psi(m)$ by~(\ref{PsiDef}).
    Therefore, $\left((w_2)^{[z]},\beta_{\Lalg}(v)\right)\in\psi(m)^c$ and 
    $\left(u,\beta_{\Lalg}(v)\right)\in
    \alpha_{\koflbf}\mathbin{;}\beta_{\koflbf}\mathbin{;}\psi(m)^c$.
    Finally, since $\left(\beta_{\Lalg}(v),v\right)
    \in\beta_{\Lalg}\subseteq\beta_{\koflbf}$, it follows
    that $(u,v)\in \alpha_{\koflbf}\mathbin{;}\beta_{\koflbf}\mathbin{;}\psi(m)^c\mathbin{;}\beta_{\koflbf}=\neg\psi(m)$.

    \vskip 0.1cm
    \item If $(u,v)\in {}_{[z]}X_{\K}\times {}^{[z]}{X_{\K}}$
    for some $[z]\in X_{\Lalg}/E_{\Lalg}$,
    then there exists $w_1,w_2\in X_{\K}$ such that
    $u=(w_1)_{[z]}$ and $v=(w_2)^{[z]}$. It follows that $\alpha_{\koflbf}(u)=
    \left(\alpha_{\K}
    %cr added subscript 1 to w
    (w_1)\right)_{[z]}=(w_3)_{[z]}$
    where $w_3\in X_{\K}$ such that $w_1\neq w_3$.
    Also, $\beta_{\koflbf}((w_3)_{[z]})=\left(\beta_{\K}\left(w_3\right)\right)^{[z]}=(w_3)^{[z]}$.
    Hence, $(u,(w_3)^{[z]})\in\alpha_{\koflbf}\mathbin{;}\beta_{\koflbf}$. Next, 
    $\left((w_3)^{[z]},(w_2)_{[z]}\right)\notin\psi(m)$ by~(\ref{PsiDef}).
    Thus, $\left((w_3)^{[z]},(w_2)_{[z]}\right)\in\psi(m)^c$ and 
    $\left(u,(w_2)_{[z]}\right)\in
    \alpha_{\koflbf}\mathbin{;}\beta_{\koflbf}\mathbin{;}\psi(m)^c$.
    Finally, since  we have $\beta_{\koflbf}\left((w_2)_{[z]}\right)
    =\left(\beta_{\K}(w_2)\right)^{[z]}=(w_2)^{[z]}$,
    it follows that
    $\left(u,(w_2)^{[z]}\right)=(u,v)\in \alpha_{\koflbf}\mathbin{;}\beta_{\koflbf}\mathbin{;}\psi(m)^c\mathbin{;}\beta_{\koflbf}=\neg\psi(m)$.
    
    \vskip 0.1cm
    \item If $(u,v)\in \varphi_{\Lalg}(\neg_{\Lalg}m)$, then,
    since $\varphi_{\Lalg}$ is an embedding, we have
    \begin{eqnarray*}
       & & (u,v)\in\varphi_{\Lalg}(\neg_{\Lalg}m)=
        \neg\varphi_{\Lalg}(m) =\alpha_{\Lalg}\mathbin{;}\beta_{\Lalg}\mathbin{;}\varphi_{\Lalg}(m)^c\mathbin{;}\beta_{\Lalg}\\
        &\Rightarrow & (\beta_\Lalg(\alpha_\Lalg(u)), \beta_\Lalg(v)) \in E_\Lalg \textnormal{ and }
        (\beta_\Lalg(\alpha_\Lalg(u)), \beta_\Lalg(v)) \notin \varphi_\Lalg(m) \\
        & \Rightarrow & (\beta_{\koflbf}(\alpha_{\koflbf}(u)), \beta_{\koflbf}(v)) \in E_{\koflbf}\textnormal{ and }
        (\beta_{\koflbf}(\alpha_{\koflbf}(u)), \beta_{\koflbf}(v)) \notin \psi(m) \\
        & \Rightarrow & (\beta_{\koflbf}(\alpha_{\koflbf}(u)), \beta_{\koflbf}(v)) \in \psi(m)^c\\
        & \Rightarrow & (u, v) \in \alpha_{\koflbf}\mathbin{;}\beta_{\koflbf}\mathbin{;}\psi(m)^c\mathbin{;}\beta_{\koflbf}=\neg\psi(m).
    \end{eqnarray*}
    \end{itemize}
        
    For the containment in the other direction, let 
	$(u,v)\in \neg\psi(m)$.  
	Then we have $(u,v)\in\alpha_{\koflbf}\mathbin{;}\beta_{\koflbf}\mathbin{;}
    \psi(m)^c\mathbin{;}\beta_{\koflbf}$, i.e., 
    $\left(\beta_{\koflbf}(\alpha_{\koflbf}(u)),\beta_{\koflbf}(v)\right)\in\psi(m)^c$, i.e.,
    $\left(\beta_{\koflbf}(\alpha_{\koflbf}(u)),\beta_{\koflbf}(v)\right) \in E_{\koflbf}$ and $\left(\beta_{\koflbf}(\alpha_{\koflbf}(u)),\beta_{\koflbf}(v)\right)\notin\psi(m)$. Since $\alpha_{\koflbf}, \beta_{\koflbf} \subseteq E_{\koflbf}$ and $E_{\koflbf}$ is transitive, we get $(u, v) \in E_{\koflbf}$. 
    
    Now suppose $u\in {}^{[z]}\!X_{\K}$
    and $v\in[z]$ for
    some $[z]\in X_{\Lalg}/E_{\Lalg}$.
    Then there exists $w\in X_{\K}$
    such that $u=w^{[z]}$ and $\beta_{\koflbf}\left(
    \alpha_{\koflbf}(u)\right)=\left(\beta_{\K}(\alpha_{\K}
    (w))\right)_{[z]}$. Also $\beta_{\koflbf}(v)
    =\beta_{\Lalg}(v)\in[z]$, since 
    $\beta_{\Lalg}\subseteq E_{\Lalg}$.  But then
    $\left(\beta_{\koflbf}(\alpha_{\koflbf}(u)),\beta_{\koflbf}(v)\right)\in {}_{[z]}\!X_{\K}\times [z]\subseteq\psi(m)$, contradicting the previous
    statement that this pair is not a member of $\psi(m)$.
    Hence, it cannot be the case that $u\in {}^{[z]}\!X_{\K}$
    and $v\in[z]$ for
    some $[z]\in X_{\Lalg}/E_{\Lalg}$.
    Similarly we can show that it can also not be the case
    that $u\in[z]$ and $v\in{}_{[z]}\!X_{\K}$ for
    some $[z]\in X_{\Lalg}/E_{\Lalg}$
    nor that $u\in{}^{[z]}\!X_{\K}$ and 
    $v\in {}_{[z]}\!X_{\K}$for
    some $[z]\in X_{\Lalg}/E_{\Lalg}$. 
    
    On the other hand, if $u\in {}_{[z]}\!X_{\K}$ and 
    $v\in [z]$ for some $[z]\in X_{\Lalg}/E_{\Lalg}$,
    then it is immediate that
    $(u,v)\in {}_{[z]}\!X_{\K}\times [z]
    \subseteq \psi(\neg_{\koflbf}m)$ by~(\ref{PsiDef}).
    Similarly, if $u\in {}_{[z]}\!X_{\K}$ and 
    $v\in  {}^{[z]}\!X_{\K}$, or if $u\in [z]$ and 
    $v\in  {}^{[z]}\!X_{\K}$, then
    $(u,v)\in {}_{[z]}\!X_{\K}\times {}^{[z]}\!X_{\K}\subseteq \psi(\neg_{\koflbf}m)$ or, respectively,
    $(u,v)\in [z]\times {}^{[z]}\!X_{\K}\subseteq \psi(\neg_{\koflbf}m)$.
    Thus we are left 
    with three subcases to consider.
    \begin{itemize}
        \item If $u,v\in X_{\Lalg}$, then $[u]=[v]$
        since %cr %$\neg\psi(m)\subseteq E_{\koflbf}$.
        $(u,v) \in E_{\koflbf}$.
        Furthermore, $\beta_{\koflbf}(\alpha_{\koflbf}(u))
        =\beta_{\Lalg}(\alpha_{\Lalg}(u))\in [u]$ and $\beta_{\koflbf}(v)=\beta_{\Lalg}(v)\in [v]$ since $\beta_{\Lalg}\subseteq\beta_{\koflbf} \subseteq E_{\koflbf}$ and 
        $\alpha_{\Lalg}\subseteq\alpha_{\koflbf}  \subseteq E_{\koflbf}$. % and
        %$\beta_{\koflbf},\alpha_{\koflbf}
        %\subseteq E_{\koflbf}$.
        Hence,
        \begin{eqnarray*}
            & & \left(\beta_{\koflbf}(\alpha_{\koflbf}(u)),\beta_{\koflbf}(v)\right)\notin\psi(m)\\
            & \Rightarrow &
            \left(\beta_{\Lalg}(\alpha_{\Lalg}(u)),\beta_{\Lalg}(v)\right)\notin\varphi_{\Lalg}(m)\\
            & \Rightarrow &
            \left(\beta_{\Lalg}(\alpha_{\Lalg}(u)),\beta_{\Lalg}(v)\right)\in\varphi_{\Lalg}(m)^c\\
            & \Rightarrow &
            (u,v)\in\alpha_{\Lalg}\mathbin{;}\beta_{\Lalg}\mathbin{;}
            \varphi_{\Lalg}(m)^c\mathbin{;}\beta_{\Lalg}\\
            & \Rightarrow &
            (u,v)\in\neg\varphi_{\Lalg}(m)=\varphi_{\Lalg}(\neg_{\Lalg}m)\subseteq\psi(\neg_{\koflbf}m),
        \end{eqnarray*}
since $\varphi_{\Lalg}$ is an embedding that preserves the 
        %cr self involution --> involution
        involution.

        \vskip 0.1cm
\item If $u,v\in {}_{[z]}\!X_{\K}$, then
there exist $w_1,w_2\in X_{\K}$ such that
        $u=(w_1)_{[z]}$ and $v=(w_2)_{[z]}$.
        If $\alpha_{\K}(w_1)=w_3$
        where $w_3\in X_{\K}$
        such that $w_1\neq w_3$, then $\beta_{\koflbf}(\alpha_{\koflbf}(u))=
        \beta_{\koflbf}(\alpha_{\K}(w_1)_{[z]})=(\beta_{\K}(w_3))^{[z]}=(w_3)^{[z]}$. 
        Also, $\beta_{\koflbf}(v)=(\beta_{\K}(w_2))^{[z]}=(w_2)^{[z]}$. Hence,
        $\left(\beta_{\koflbf}(\alpha_{\koflbf}(u)),\beta_{\koflbf}(v)\right) \notin \psi(m)$ implies $\left((w_3)^{[z]},(w_2)^{[z]}\right)\notin\psi(m)$, which means 
        $w_3\neq w_2$.  But then, since $X_{\K}$ only contains
        two elements, it must be the case that $w_1=w_2$ and $u=v$.
        Hence, $(u,v)\in {}_{[z]}{\leqslant_{X_{\K}}}\subseteq \psi(\neg_{\koflbf}m)$
        by~(\ref{PsiDef}).
        
        \vskip 0.1cm
        \item If $u,v\in {}^{[z]}\!X_{\K}$, then
        the proof is similar to the previous case. \qedhere
    \end{itemize}
\end{proof}

\begin{lemma}\label{Lem:PresLattice}
	The map $\psi:(K\backslash\{a_0\})\cup L\to \mathsf{Up}\left(E_{\koflbf}, \preccurlyeq_{E_{\koflbf}}\right)$
	preserves the lattice operations $\wedge_{\koflbf}$ and $\vee_{\koflbf}$, i.e., for 
	$m_1,m_2\in (K{\setminus}\{a_0\})\cup L$,
	\[\psi(m_1\wedge_{\koflbf} m_2)=\psi(m_1)\cap\psi(m_2)
	\quad\text{and}\quad
	\psi(m_1\vee_{\koflbf} m_2)=\psi(m_1)\cup\psi(m_2).\]
\end{lemma}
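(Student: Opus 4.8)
The plan is to split the verification according to whether the two arguments lie in $\{a_{-1},a_1\}$ or in $L$, exploiting that under $\psi$ the two ``outer'' elements of $\mathbf{S}_3$ are sent to the bottom and top of the lattice of up-sets, while the elements of $L$ are all sent to sets sharing the common ``skeleton'' $R$.

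First I would record the order-theoretic facts furnished by Lemma~\ref{Lemma:K[L]ordering} for $\K=\mathbf{S}_3$: since $a_{-1}<_\K a_0=1_\K<_\K a_1$, the element $a_{-1}$ is the least and $a_1$ the greatest element of $\koflbf$, so that $a_{-1}\wedge_{\koflbf}m=a_{-1}$, $a_{-1}\vee_{\koflbf}m=m$, $a_1\wedge_{\koflbf}m=m$ and $a_1\vee_{\koflbf}m=a_1$ for every $m$. On the side of up-sets, $\psi(a_{-1})=\varnothing$ is the least and $\psi(a_1)=E_{\koflbf}$ the greatest element of $\mathsf{Up}(E_{\koflbf},\preccurlyeq_{E_{\koflbf}})$, and by Lemma~\ref{Lem:PsiUpsets} every $\psi(m)$ is an up-set, hence satisfies $\varnothing\subseteq\psi(m)\subseteq E_{\koflbf}$. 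These observations dispose at once of every case in which at least one of $m_1,m_2$ equals $a_{-1}$ or $a_1$: for instance if $m_1=a_{-1}$ then $\psi(m_1)\cap\psi(m_2)=\varnothing=\psi(m_1\wedge_{\koflbf}m_2)$ and $\psi(m_1)\cup\psi(m_2)=\psi(m_2)=\psi(m_1\vee_{\koflbf}m_2)$, with the case $m_1=a_1$ handled symmetrically using $\psi(m_2)\subseteq E_{\koflbf}$.

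The remaining case is $m_1,m_2\in L$. Here the key observation is that the relation $R$ of (\ref{Equation:RelationRDef}) contains no pair from $X_{\Lalg}\times X_{\Lalg}$: each union in its definition consists of pairs at least one of whose coordinates lies in some copy ${}^{[z]}\!X_{\K}$ or ${}_{[z]}\!X_{\K}$, and these copies are disjoint from $X_{\Lalg}$. Since $\varphi_{\Lalg}(m)\subseteq E_{\Lalg}\subseteq X_{\Lalg}\times X_{\Lalg}$, it follows that $R\cap\varphi_{\Lalg}(m)=\varnothing$ for every $m\in L$. Distributing $\cap$ and $\cup$ over the disjoint union $\psi(m_i)=R\cup\varphi_{\Lalg}(m_i)$ therefore gives
\[
\psi(m_1)\cap\psi(m_2)=R\cup\bigl(\varphi_{\Lalg}(m_1)\cap\varphi_{\Lalg}(m_2)\bigr),\qquad
\psi(m_1)\cup\psi(m_2)=R\cup\bigl(\varphi_{\Lalg}(m_1)\cup\varphi_{\Lalg}(m_2)\bigr).
\]
Finally, since $\varphi_{\Lalg}$ is a DqRA embedding it preserves the lattice operations, i.e. $\varphi_{\Lalg}(m_1)\cap\varphi_{\Lalg}(m_2)=\varphi_{\Lalg}(m_1\wedge_{\Lalg}m_2)$ and likewise for joins; and by (\ref{MeetDefinition}) and (\ref{JoinDef}) the operations $\wedge_{\koflbf},\vee_{\koflbf}$ agree with $\wedge_{\Lalg},\vee_{\Lalg}$ on $L$. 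Hence the right-hand sides equal $R\cup\varphi_{\Lalg}(m_1\wedge_{\koflbf}m_2)=\psi(m_1\wedge_{\koflbf}m_2)$ and $R\cup\varphi_{\Lalg}(m_1\vee_{\koflbf}m_2)=\psi(m_1\vee_{\koflbf}m_2)$, as required.

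Nothing in the argument is genuinely delicate; the only step I would treat as the crux is the disjointness $R\cap\varphi_{\Lalg}(m)=\varnothing$, equivalently that $R$ meets $X_{\Lalg}\times X_{\Lalg}$ trivially. Once this is in hand, the two set-theoretic identities above make the lattice operations ``factor through'' $\varphi_{\Lalg}$, and the claim reduces to $\varphi_{\Lalg}$ being a lattice homomorphism together with the fact that $L$ is a sublattice of $\koflbf$.
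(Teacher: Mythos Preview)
Your proof is correct and follows essentially the same case split as the paper's. One minor remark: the disjointness $R\cap\varphi_{\Lalg}(m)=\varnothing$ that you single out as the crux is in fact unnecessary, since $(R\cup A)\cap(R\cup B)=R\cup(A\cap B)$ is just the distributive law for sets and holds without any disjointness hypothesis---the paper simply invokes this directly.
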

\begin{proof}
%ac shortening 
We use the definitions of $\wedge_{\koflbf}$ and $\vee_{\koflbf}$ as
described in~(\ref{MeetDefinition}) and~(\ref{JoinDef}). Their commutativity reduces the number of cases we need to consider. 

\vskip 0.2cm
\noindent
\underline{Case 1:} $m_1 = a_{-1}$ or $m_2=a_{-1}$. Without loss of generality, suppose $m_1=a_{-1}$. Then 
\begin{align*}
\psi(m_1\wedge_{\koflbf}m_2)&=\psi(a_{-1})=\varnothing
=\varnothing \cap \psi(m_2) = \psi(m_1) \cap \psi(m_2),\text{ and }\\
\psi(m_1\vee_{\koflbf}m_2)&=\psi(m_2)=\varnothing \cup \psi(m_2) = \psi(m_1) \cup \psi(m_2).
\end{align*}
\underline{Case 2:} $m_1=a_1$ or $m_2=a_1$. Without loss of generality, suppose $m_1=a_1$. Then 
\begin{align*}
\psi(m_1\wedge_{\koflbf}m_2)&=\psi(m_2)=E_{\koflbf} \cap \psi(m_2) 
=\psi(m_1)\cap\psi(m_2),\text{ and }\\
\psi(m_1\vee_{\koflbf}m_2)&=\psi(a_{1})=E_{\koflbf}=E_{\koflbf} \cup \psi(m_2)
=\psi(m_1)\cup\psi(m_2).
\end{align*}

	\noindent
\underline{Case 3:} Finally, if $m_1,m_2\in L$, then 
$m_1\wedge_{\koflbf} m_2=m_1\wedge_{\Lalg} m_2$ and $m_1\vee_{\koflbf} m_2=m_1\wedge_{\Lalg} m_2$. 

Recall the relation $R={\leqslant_{X_{\mathbf{K[L]}}}} {\setminus} {\leqslant_{\mathbf{X_L}}}$ defined in~(\ref{Equation:RelationRDef}). We have  $\psi(m)=R \cup \varphi_{\Lalg}(m)$ for any $m\in L$.   Then, since 
$\varphi_{\Lalg}$ preserves $\wedge_\Lalg$ and $\vee_\Lalg$, we have
    \begin{align*}
    \psi(m_1\wedge_{\koflbf} m_2)&=R\cup\varphi_{\Lalg}(m_1\wedge_{\Lalg} m_2)=R\cup \left(\varphi_{\Lalg}(m_1)
    \cap\varphi_{\Lalg}(m_2)\right)\\
    &=
    \left(R\cup\varphi_{\Lalg}(m_1)\right)\cap
    \left(R\cup\varphi_{\Lalg}(m_2)\right)
    =\psi(m_1)\cap\psi(m_2),
    \end{align*}
     and
    \begin{align*}
    \psi(m_1\vee_{\koflbf} m_2)&=R\cup\varphi_{\Lalg}(m_1\vee_{\Lalg} m_2)=R\cup \left(\varphi_{\Lalg}(m_1)
    \cup\varphi_{\Lalg}(m_2)\right)\\
    &=
    \left(R\cup\varphi_{\Lalg}(m_1)\right)\cup
    \left(R\cup\varphi_{\Lalg}(m_2)\right)
    =\psi(m_1)\cup\psi(m_2). \qedhere
    \end{align*}
\end{proof}

Lastly, we need to show that $\psi$ preserves the monoid operation. \\

\begin{lemma}\label{Lem:PresMonoid}
	The map $\psi:K\backslash\{a_0\}\cup L\to \mathsf{Up}\left(E_{\koflbf}, \preccurlyeq_{E_{\koflbf}}\right)$
	preserves the monoid operation, $\cdot_{\koflbf}$, i.e., for 
	$m_1,m_2\in K\backslash\{a_0\}\cup L$,
	\[\psi(m_1\cdot_{\koflbf} m_2)=\psi(m_1)\mathbin{;}\psi(m_2).\]
\end{lemma}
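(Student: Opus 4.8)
The plan is to split into cases according to whether each $m_i$ lies in $K\setminus\{a_0\}=\{a_{-1},a_1\}$ or in $L$, mirroring the four clauses defining $\cdot_{\koflbf}$ in~(\ref{OperationDef}). The guiding observation is that, by~(\ref{PsiDef}), $\psi(a_{-1})=\varnothing$, $\psi(a_1)=E_{\koflbf}$, and $\psi(m)=R\cup\varphi_{\Lalg}(m)$ for $m\in L$; hence whenever a $K$-argument is involved the computation reduces to composing with $\varnothing$ or with $E_{\koflbf}$, and the genuine content sits in the case $m_1,m_2\in L$.

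First I would dispose of every case in which $a_{-1}$ occurs: there $m_1\cdot_{\koflbf}m_2=a_{-1}$, so $\psi(m_1\cdot_{\koflbf}m_2)=\varnothing$, and since composing with $\varnothing$ on either side yields $\varnothing$, the identity holds. Next I would treat the cases in which $a_1$ occurs but $a_{-1}$ does not; here $m_1\cdot_{\koflbf}m_2=a_1$, so the target is $E_{\koflbf}$, and it suffices to show $E_{\koflbf}\mathbin{;}\psi(m)=E_{\koflbf}=\psi(m)\mathbin{;}E_{\koflbf}$ (and $E_{\koflbf}\mathbin{;}E_{\koflbf}=E_{\koflbf}$) whenever $\psi(m)$ equals $E_{\koflbf}$ or $R\cup\varphi_{\Lalg}(m)$ with $m\in L$. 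The inclusion $\subseteq$ is immediate since $\psi(m)\subseteq E_{\koflbf}$ and $E_{\koflbf}$ is an equivalence relation; for $\supseteq$ I would use $R\subseteq\psi(m)$ together with the fact that, within each block $[z]\cup{}^{[z]}\!X_{\K}\cup{}_{[z]}\!X_{\K}$, every element occurs as a first and as a second coordinate of some pair of $R$ (via ${}^{[z]}{\leqslant_{X_{\K}}}$, ${}_{[z]}{\leqslant_{X_{\K}}}$, $[z]\times{}^{[z]}\!X_{\K}$ and ${}_{[z]}\!X_{\K}\times[z]$), so that any $E_{\koflbf}$-pair can be completed through $R$.

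The main case, and the only real obstacle, is $m_1,m_2\in L$, where $m_1\cdot_{\koflbf}m_2=m_1\cdot_{\Lalg}m_2\in L$ and the target is $\psi(m_1\cdot_{\Lalg}m_2)=R\cup\varphi_{\Lalg}(m_1\cdot_{\Lalg}m_2)=R\cup\bigl(\varphi_{\Lalg}(m_1)\mathbin{;}\varphi_{\Lalg}(m_2)\bigr)$, the last step because $\varphi_{\Lalg}$ is a DqRA embedding. Distributing composition over union gives
\[
\psi(m_1)\mathbin{;}\psi(m_2)=(R\mathbin{;}R)\cup\bigl(R\mathbin{;}\varphi_{\Lalg}(m_2)\bigr)\cup\bigl(\varphi_{\Lalg}(m_1)\mathbin{;}R\bigr)\cup\bigl(\varphi_{\Lalg}(m_1)\mathbin{;}\varphi_{\Lalg}(m_2)\bigr),
\]
so it remains to prove that each of the first three summands is contained in $R$, whence they collapse into $R$ and leave exactly $R\cup\bigl(\varphi_{\Lalg}(m_1)\mathbin{;}\varphi_{\Lalg}(m_2)\bigr)$, matching the target.

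This is where the detailed block analysis is needed. Viewing a block as three \emph{levels} $D={}_{[z]}\!X_{\K}$, $M=[z]$ and $U={}^{[z]}\!X_{\K}$, the relation $R$ restricted to the block, by~(\ref{Equation:RelationRDef}), consists precisely of $\mathrm{id}_D$, $\mathrm{id}_U$, $D\times M$, $M\times U$ and $D\times U$, with no pairs inside $M\times M$. A short check on the level of the intermediate point then shows $R\mathbin{;}R=R$; since $\varphi_{\Lalg}(m_2)\subseteq M\times M$ forces the intermediate point into $M$, one gets $R\mathbin{;}\varphi_{\Lalg}(m_2)\subseteq D\times M\subseteq R$, and dually $\varphi_{\Lalg}(m_1)\mathbin{;}R\subseteq M\times U\subseteq R$. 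I would also note that $\varphi_{\Lalg}(m_1)\mathbin{;}\varphi_{\Lalg}(m_2)$ may be computed in $X_{\koflbf}$ or in $X_{\Lalg}$ interchangeably, as the intermediate point is forced into $X_{\Lalg}$, so this summand is genuinely $\varphi_{\Lalg}(m_1\cdot_{\Lalg}m_2)$. The bookkeeping of these five level-to-level pieces of $R$ is the crux; everything else is routine.
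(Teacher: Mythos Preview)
Your proof is correct and takes a genuinely different route from the paper. The paper argues by direct element-chasing: in the central case $m_1,m_2\in L$ it proves each inclusion separately, first running through the six pieces of $\psi(m_1\cdot_{\Lalg}m_2)$ in~(\ref{PsiDef}) to show they land in $\psi(m_1)\mathbin{;}\psi(m_2)$, and then, for the converse, takes an arbitrary witness $u_1$ of the composition and splits on whether $u_1$ lies in ${}_{[z]}X_{\K}$, ${}^{[z]}X_{\K}$ or $[z]$. The mixed case $m_1=a_1$, $m_2\in L$ is handled similarly by a five-subcase analysis on the types of $u$ and $v$.

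Your approach is more algebraic and, arguably, more transparent: you use distributivity of $\mathbin{;}$ over $\cup$ to reduce the $L$--$L$ case to the three relational facts $R\mathbin{;}R=R$, $R\mathbin{;}\varphi_{\Lalg}(m_2)\subseteq R$ and $\varphi_{\Lalg}(m_1)\mathbin{;}R\subseteq R$, each of which follows from the three-level analysis $D,M,U$ of a block. This isolates the combinatorial core (the idempotence of $R$ and its absorption of $E_{\Lalg}$-relations) once and for all, rather than rediscovering it implicitly across many subcases; it also yields both inclusions simultaneously. Your handling of the $a_1$ cases via ``every element of a block is a first and a second $R$-coordinate'' is likewise a cleaner packaging of what the paper does by enumeration. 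The paper's style has the virtue of being completely explicit at each step, but your decomposition scales better and makes the role of $R$ as $\leqslant_{X_{\koflbf}}\!\setminus\leqslant_{X_{\Lalg}}$ structurally visible.
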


\begin{proof} 
Recall the definition of $\cdot_{\koflbf}$
    from~(\ref{OperationDef}). 

\vskip 0.2cm
\noindent
\underline{Case 1:} $m_1=a_{-1}$ or $m_2=a_{-1}$. Without loss of generality, assume $m_1=a_{-1}$. Then  
$$ \psi(m_1 \cdot_{\mathbf{K[L]}} m_2)=\psi(a_{-1})=\varnothing = \varnothing \mathbin{;} \psi(m_2) = \psi(a_{-1}) \mathbin{;} \psi(m_2) = \psi(m_1)\mathbin{;}\psi(m_2). $$

\noindent \underline{Case 2:} $m_1=a_1=m_2$. Then $m_1\cdot_{\koflbf} m_2=m_1\cdot_{\K} m_2=a_{1}$ and $\psi(m_1\cdot_{\koflbf}m_2)= \psi(a_{1})=E_{\koflbf}=E_{\koflbf}\mathbin{;}E_{\koflbf}=\psi(m_1)\mathbin{;}\psi(m_2)$, where the equality  $E_{\koflbf}=E_{\koflbf}\mathbin{;}E_{\koflbf}$ follows from $E_{\koflbf}$ being reflexive and transitive. 	\\

\noindent \underline{Case 3:} If $m_1,m_2\in L$, then $m_1\cdot_{\koflbf} m_2=m_1\cdot_{\Lalg} m_2$. We first show that $\psi(m_1\cdot_{\koflbf}m_2)\subseteq \psi(m_1)\mathbin{;}\psi(m_2)$.  Let
$(u,v)\in \psi(m_1\cdot_{\koflbf}m_2)$. 
By (\ref{PsiDef}), there are six subcases to
consider:
	
	\begin{itemize}
	    \item If $(u,v)\in {}^{[z]}{\leqslant_{X_{\K}}}$
     for some $[z]\in X_{\Lalg}/E_{\Lalg}$, then 
     $(u,v)\in\psi(m_1)$ and $(v,v)\in\psi(m_2)$
     which implies that $(u,v)\in\psi(m_1)\mathbin{;}\psi(m_2)$.

     \vskip 0.1cm   
     \item If $(u,v)\in {}_{[z]}{\leqslant_{X_{\K}}}$
     for some $[z]\in X_{\Lalg}/E_{\Lalg}$ the proof is 
     similar to the proof of the previous case.

     \vskip 0.1cm   
     \item If $(u,v)\in [z]\times {}^{[z]}\!X_{\K}$
     for some $[z]\in X_{\Lalg}/E_{\Lalg}$, then we have
     $(u,v)\in \psi(m_1)$ and $(v,v)\in {}^{[z]}{\leqslant_{X_{\K}}}\subseteq\psi(m_2)$, which means 
     $(u,v)\in\psi(m_1)\mathbin{;}\psi(m_2)$.

     \vskip 0.1cm   
     \item If $(u,v)\in {}_{[z]}\!X_{\K}\times {}^{[z]}\!X_{\K}$
     for some $[z]\in X_{\Lalg}/E_{\Lalg}$, the proof is 
     similar to the proof of the previous case.

     \vskip 0.1cm   
     \item If $(u,v)\in {}_{[z]}\!X_{\K}\times [z]$
     for some $[z]\in X_{\Lalg}/E_{\Lalg}$, then
     $(u,u)\in {}_{[z]}{\leqslant_{X_{\K}}}\subseteq
     \psi(m_1)$, $(u,v)\in\psi(m_2)$ and hence
     $(u,v)\in\psi(m_1)\mathbin{;}\psi(m_2)$.

     \item 
     If $(u,v)\in\varphi_{\Lalg}(m_1\cdot_{\Lalg}m_2)$,
     then since $\varphi_{\Lalg}$ 
     preserves $\cdot_{\Lalg}$, 
     $(u,v)\in \varphi_{\Lalg}(m_1)\mathbin{;}\varphi_{\Lalg}(m_2)$.
     Therefore, there exists some $u_1\in X_{\Lalg}$ such that
     $(u,u_1)\in\varphi_{\Lalg}(m_1)\subseteq\psi(m_1)$
     and $(u_1,v)\in \varphi_{\Lalg}(m_2)\subseteq\psi(m_2)$.
     It follows that $(u,v)\in\psi(m_1)\mathbin{;}\psi(m_2)$.
     \end{itemize}

    \vskip 0.2cm
    For the inclusion in the other direction, 
    let $(u,v)\in\psi(m_1)\mathbin{;}\psi(m_2)$.  Then there
    exists some $u_1\in X_{\koflbf}$ such that 
    $(u,u_1)\in\psi(m_1)$ and $(u_1,v)\in\psi(m_2)$.
    %Since $\psi(m_1),\psi(m_2)\subseteq E_{\koflbf}$
    %we have that $[u]=[u_1]=[v]$.
    There are three subcases to consider:
    \begin{itemize}
        \item If $u_1\in {}_{[z]}\!X_{\K}$  for some $[z]\in X_{\Lalg}/E_{\Lalg}$, then
        $(u,u_1)\in\psi(m_1)$ implies $u\in {}_{[z]}\!X_{\K}$.  On the other hand, $(u_1,v)\in\psi(m_2)$
        implies $v\in {}_{[z]}\!X_{\K}$, or 
        $v\in [z]$, or $v\in {}^{[z]}\!X_{\K}$.
        These three cases in turn respectively imply that
        $(u,v)\in {}_{[z]}{\leqslant_{X_{\K}}}$, or 
        $(u,v)\in {}_{[z]}\!X_{\K}\times[z]$, or
        $(u,v)\in {}_{[z]}\!X_{\K}\times {}^{[z]}\!X_{\K}$.
        Hence, $(u,v)\in\psi(m_1\cdot_{\koflbf}m_2)$.

        \vskip 0.1cm    
        \item If $u_1\in {}^{[z]}\!X_{\K}$  for some $[z]\in X_{\Lalg}/E_{\Lalg}$,
        then
        $(u_1,v)\in\psi(m_2)$ implies that $v\in {}^{[z]}\!X_{\K}$.  On the other hand, $(u,u_1)\in\psi(m_1)$
        implies that $u\in {}_{[z]}\!X_{\K}$, or 
        $u\in [z]$, or $u\in {}^{[z]}\!X_{\K}$.
        Respectively, these possibilities imply that
        $(u,v)\in {}_{[z]}\!X_{\K}\times {}^{[z]}\!X_{\K}$, or 
        $(u,v)\in [z]\times {}^{[z]}\!X_{\K}$, or
        $(u,v)\in {}^{[z]}{\leqslant_{X_{\K}}}$.
        Hence, $(u,v)\in\psi(m_1\cdot_{\koflbf}m_2)$.
        
        \item If $u_1\in [z]$  for some $[z]\in X_{\Lalg}/E_{\Lalg}$, then there are four 
        possibilities:
        \begin{itemize}
            \item $u\in {}_{[z]}\!X_{\K}$,
            $v\in [z]$ and hence 
            $(u,v)\in {}_{[z]}\!X_{\K}\times[z]$.
            \item $u\in {}_{[z]}\!X_{\K}$,
            $v\in {}^{[z]}\!X_{\K}$ and hence
            $(u,v)\in {}_{[z]}\!X_{\K}\times{}^{[z]}\!X_{\K}$.
            \item $u\in [z]$,
            $v\in {}^{[z]}\!X_{\K}$ and hence
            $(u,v)\in [z]\times{}^{[z]}\!X_{\K}$.
            \item $u, v\in [z]$, % and 
           % $v\in [z]$.  Consequently, since  $(u,v)\in\psi(m_1)\mathbin{;}\psi(m_2)$ we get 
           and therefore it must be the case that $(u,u_1) \in \varphi_{\Lalg}(m_1)$ and $(u_1, v) \in \varphi_{\Lalg}(m_2)$, which gives 
            $(u,v)\in\varphi_{\Lalg}(m_1)\mathbin{;}\varphi_{\Lalg}(m_2)=\varphi_{\Lalg}(m_1\cdot_{\Lalg}m_2)$.
        \end{itemize}
        For all of these possibilities it follows that 
        $(u,v)\in\psi(m_1\cdot_{\koflbf}m_2)$.
    \end{itemize}

    \vskip 0.2cm 
    
\noindent \underline{Case 4:} $m_1=a_1$ and $m_2 \in L$. If $m_1=a_{1}$, then $\psi(m_1)=E_{\koflbf}$. Since $\psi(m_2)\subseteq E_{\koflbf}$ 	it follows that   $\psi(m_1)\mathbin{;}\psi(m_2)=E_{\koflbf}\mathbin{;} \psi(m_2)\subseteq E_{\koflbf}=\psi(m_1)= \psi(m_1\cdot_{\koflbf} m_2)$. 

    	For the inclusion in the other direction, let $(u,v)\in E_{\koflbf}$.
        \begin{itemize}
            \item If $u\in X_{\koflbf}$ and $v\in {}_{[z]}X_{\K}$ for some
            $[z]\in X_{\Lalg}/E_{\Lalg}$, then $(v,v)\in\psi(m_2)$ and
            $(u,v)\in E_{\koflbf}\mathbin{;}\psi(m_2)$.

            \vskip 0.1cm
            \item If $u\in X_{\koflbf}$ and 
            $v\in {}^{[z]}\!X_{\K}$ for some
            $[z]\in X_{\Lalg}/E_{\Lalg}$, then $(v,v)\in\psi(m_2)$ and
            $(u,v)\in E_{\koflbf}\mathbin{;}\psi(m_2)$.

            \vskip 0.1cm
            \item If $u\in {}_{[v]}\!X_{\K}$ and $v\in X_{\Lalg}$, then $(u,u)\in E_{\koflbf}$ and
            $(u,v)\in \psi(m_2)$,  and so
            $(u,v)\in E_{\koflbf}\mathbin{;}\psi(m_2)$.

            \vskip 0.1cm
            \item If $u\in {}^{[v]}\!X_{\K}$ and $v\in X_{\Lalg}$, then there exists $w\in X_{\K}$
            such that $u=w^{[v]}$.  Thus,
            $(u,w_{[v]})\in E_{\koflbf}$ and
            $(w_{[v]},v)\in \psi(m_2)$, and
            hence 
            $(u,v)\in E_{\koflbf}\mathbin{;}\psi(m_2)$.

            \vskip 0.1cm
            \item If $u, v\in X_{\Lalg}$, then
            $[u]=[v]$ in $X_{\Lalg}/E_{\Lalg}$.
            Let $w\in X_{\K}\neq \varnothing$. Thus
            we have that $(u,w_{[u]})\in E_{\koflbf}$ and
            $(w_{[u]},v)\in \psi(m_2)$, and hence 
            $(u,v)\in E_{\koflbf}\mathbin{;}\psi(m_2)$. \\ 
        \end{itemize}

\noindent \underline{Case 5:} If $m_1\in L$ and $m_2=a_1$, then the proof that $\psi(m_1\cdot_{\koflbf} m_2)=\psi(m_2)= \psi(m_1)\mathbin{;}\psi(m_2)$ is similar to the proof of the previous case.	
\end{proof}

Combining Lemmas~\ref{Lem:PsiInject},~\ref{Lem:PresLinNeg},~\ref{Lem:PresSelfInvol},~\ref{Lem:PresLattice} and~\ref{Lem:PresMonoid} 
we have shown that the map 
$\psi$
%$\psi:(K-\{a_0\})\cup L\to \mathsf{Up}(E_{\koflbf})$
is a DqRA embedding from $(K\backslash\{a_0\})\cup L$ into $\mathsf{Up}\left(E_{\koflbf}, \preccurlyeq_{E_{\koflbf}}\right)$. Regarding the finiteness of the representations, it is clear from the construction of $\mathbf{X}_{\mathbf{S}_3[\Lalg]}$ that if $\Lalg$ is finitely representable (i.e. $\mathbf{X}_\Lalg$ is finite), then $\mathbf{S}_3[\Lalg]$ is finitely representable.  
This completes the proof of Theorem~\ref{Thm:K[L]represenatble}.\\

\section{Finite representations of finite Sugihara chains}\label{sec:applications}

%In this section we give some further examples of the representability of $\mathbf{S}_3[\mathbf{L}]$. In particular, 

In this section,
we show how the results of Section~\ref{sec:KL-rep} lead to finite representations of all finite Sugihara chains, and give some examples.
A consequence of this is that the result in Theorem~\ref{Thm:K[L]represenatble} can be generalised to Corollary~\ref{cor:Sn[L]-rep}, which states that if $\Lalg$ is a representable distributive quasi relation algebra, then the nested sum of a finite odd Sugihara chain $\mathbf{S}_n$ and $\Lalg$ is representable.

Proposition~\ref{prop:K=Sn+L=Sm} gives us that $\mathbf{S}_3[\mathbf{S}_{n-2}]\approx \mathbf{S}_{n}$. 
Using Examples~\ref{ex:S2} and~\ref{ex:S3} and Theorem~\ref{Thm:K[L]represenatble} we can apply a simple inductive argument to obtain the result below. \\

\begin{theorem}\label{thm:fin-rep-Sn}
Every finite Sugihara chain is finitely representable. \\
\end{theorem}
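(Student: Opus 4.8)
The plan is to proceed by induction on $n$, stepping in increments of two so that the odd and even chains are handled simultaneously starting from two base cases. First I would record these base cases: $\mathbf{S}_2$ is finitely representable by Example~\ref{ex:S2} (over the one-element poset), and $\mathbf{S}_3$ is finitely representable by Example~\ref{ex:S3} (over the two-element antichain). These anchor the even and odd families, respectively.

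For the inductive step, fix $n \geqslant 4$ and assume as inductive hypothesis that $\mathbf{S}_{n-2}$ is finitely representable; note that $n-2 \geqslant 2$, so $\mathbf{S}_{n-2}$ is a genuine finite Sugihara chain to which the hypothesis applies. By Proposition~\ref{prop:K=Sn+L=Sm}, applied with $\mathbf{K}=\mathbf{S}_3$ (the odd three-element chain demanded by that proposition) and $\mathbf{L}=\mathbf{S}_{n-2}$, we have $\mathbf{S}_3[\mathbf{S}_{n-2}] \cong \mathbf{S}_n$. Since $\mathbf{S}_{n-2}$ is finitely representable, Theorem~\ref{Thm:K[L]represenatble} gives that $\mathbf{S}_3[\mathbf{S}_{n-2}]$ is finitely representable, and hence so is the isomorphic copy $\mathbf{S}_n$. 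Finiteness is preserved here precisely because the construction of $\mathbf{X}_{\mathbf{S}_3[\mathbf{L}]}$ in Section~\ref{sec:KL-rep} yields a finite poset whenever $\mathbf{X}_{\mathbf{L}}$ is finite.

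Running the induction upward from the base case $\mathbf{S}_2$ passes through every even chain $\mathbf{S}_2, \mathbf{S}_4, \mathbf{S}_6, \ldots$, while running it upward from $\mathbf{S}_3$ passes through every odd chain $\mathbf{S}_3, \mathbf{S}_5, \mathbf{S}_7, \ldots$. Together these two sequences exhaust all finite Sugihara chains $\mathbf{S}_n$ with $n \geqslant 2$, which completes the argument.

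I do not expect a genuine obstacle to remain at this stage: the substantive work has already been discharged in Theorem~\ref{Thm:K[L]represenatble} (preservation of representability, including finiteness, under nested sum with $\mathbf{S}_3$) and in the isomorphism $\mathbf{S}_3[\mathbf{S}_{n-2}] \cong \mathbf{S}_n$ of Proposition~\ref{prop:K=Sn+L=Sm}. The only points needing care are purely bookkeeping: verifying that the two base cases $\mathbf{S}_2$ and $\mathbf{S}_3$ between them reach every $n \geqslant 2$ under the step of size two, and explicitly noting that the representation delivered by Theorem~\ref{Thm:K[L]represenatble} is finite, which is immediate from the finite construction of $\mathbf{X}_{\mathbf{S}_3[\mathbf{L}]}$.
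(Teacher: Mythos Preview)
Your proposal is correct and matches the paper's own argument essentially line for line: the paper invokes Examples~\ref{ex:S2} and~\ref{ex:S3} as base cases, uses Proposition~\ref{prop:K=Sn+L=Sm} to write $\mathbf{S}_n \cong \mathbf{S}_3[\mathbf{S}_{n-2}]$, and applies Theorem~\ref{Thm:K[L]represenatble} inductively. Your write-up is in fact more explicit than the paper's (which simply says ``a simple inductive argument''), but the content is identical.
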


The construction used in the proof of Theorem~\ref{Thm:K[L]represenatble} provides alternative representations to those obtained in~\cite{Mad2010} and~\cite{CR-Sugihara} where the representations for $\mathbf{S}_n$ were infinite for $n \geqslant 4$.\\

\begin{figure}[ht]
\centering
\begin{tikzpicture}[scale=1.5,pics/sample/.style={code={\draw[#1] (0,0) --(0.6,0) ;}},
Dotted/.style={% https://tex.stackexchange.com/a/52856/194703
dash pattern=on 0.1\pgflinewidth off #1\pgflinewidth,line cap=round,
shorten >=#1\pgflinewidth/2,shorten <=#1\pgflinewidth/2},
Dotted/.default=3]
\begin{scope}[xshift=-5cm, box/.style = {draw,dashdotdotted,inner sep=20pt,rounded corners=5pt,thick}]
%xshift=-1.25cm,yshift=-2.5cm]
%Elements
\node[draw,circle,inner sep=1.5pt] (x) at (-2.5,0.7) {};
\node[draw,circle,inner sep=1.5pt] (y) at (-0.5,0.7) {};
%alpha
\path (x) edge [->, bend left=25, dashed, thick] node {} (y);
\path (y) edge [->, bend left=25, dashed, thick] node {} (x);
\draw [->,dotted, thick] (x) edge[loop left]node{} (x);
\draw [->,dotted, thick] (y) edge[loop right]node{} (y);
\node[label,anchor=north,xshift=-1pt] at (x) {$x$};
\node[label,anchor=north,xshift=-1pt] at (y) {$y$};
\path (1,3.5) 
 node[matrix,anchor=north east,draw,nodes={anchor=center},inner sep=2pt, thick]  {
 \pic{sample=solid}; & \node{$\leqslant$}; \\
  \pic{sample=dashed}; & \node{$\alpha$}; \\
  \pic{sample=dotted}; & \node{$\beta$}; \\
  \pic{sample=dashdotdotted}; & \node{$E$ blocks}; \\
 };
 \node[box,fit=(x)(y)] {};
\node[label,anchor=north,xshift=43pt,yshift=-25pt] at (x) {$\mathbf{X}_{\mathbf{S}_3}$};
\end{scope}

\begin{scope}[xshift=-1cm,box/.style = {draw,dashdotdotted,inner sep=20pt,rounded corners=5pt,thick}]
%xshift=-1.25cm,yshift=-2.5cm]
%Elements
\node[draw,circle,inner sep=1.5pt] (u) at (-2.5,0.7) {};
%alpha
\draw [->,dotted, thick] (u) edge[loop right, looseness=40]node{} (u);
\draw [->,dashed, thick] (u) edge[loop above, looseness=40]node{} (u);
\node[label,anchor=north,xshift=-1pt] at (u) {$u$};
\node[box,fit=(u)] {};
\node[label,anchor=north,xshift=8pt,yshift=-25pt] at (u) {$\mathbf{X}_{\mathbf{S}_2}$};
\end{scope}

\begin{scope}[xshift=-0.5cm, yshift=1.8cm,box/.style = {draw,dashdotdotted,inner sep=20pt,rounded corners=5pt,thick}]
% Elements
\node[draw,circle,inner sep=1.5pt] (a) at (-1,1) {};
\node[draw,circle,inner sep=1.5pt] (b) at (1,1) {};
\node[draw,circle,inner sep=1.5pt] (c) at (0,0) {};
\node[draw,circle,inner sep=1.5pt] (d) at (-1,-1) {};
\node[draw,circle,inner sep=1.5pt] (e) at (1,-1) {};
% Order
\draw[order] (a)--(c)--(d);
\draw[order] (b)--(c)--(e);
%Labels
\node[label,anchor=south,yshift=-1pt] at (a) {$x^{[u]}$};
\node[label,anchor=south,yshift=-2pt, xshift=5pt] at (b) {$y^{[u]}$};
\node[label,anchor=north] at (d) {$x_{[u]}$};
\node[label,anchor=north, xshift=5pt] at (e) {$y_{[u]}$};
\node[label,anchor=east] at (c) {$u$};
%new alpha
\path (a) edge [->, bend left=23, dashed, thick] node {} (b);
\path (b) edge [->, bend left=23, dashed, thick] node {} (a);
\path (d) edge [->, bend left=23, dashed, thick] node {} (e);
\path (e) edge [->, bend left=23, dashed, thick] node {} (d);
\draw [->,dashed, thick] (c) edge[loop above,looseness=40]node{} (c);
%new beta
\path (a) edge [->, bend left=23, dotted, thick] node {} (d);
\path (b) edge [->, bend left=23, dotted, thick] node {} (e);
\path (d) edge [->, bend left=23, dotted, thick] node {} (a);
\path (e) edge [->, bend left=23, dotted, thick] node {} (b);
\draw [->,dotted, thick] (c) edge[loop right,looseness=40]node{} (c);
%Equivalence block
\node[box,fit=(a)(b)(d)(e)(c),] {};
\node[label,anchor=north,xshift=40pt,yshift=-25pt] at (d) {$\mathbf{X}_{\mathbf{S}_{3}[\mathbf{S}_2]}$};
\end{scope}

\begin{scope}[xshift=-6.6cm,yshift=-2cm]
% Elements
\node[draw,circle,inner sep=1.5pt,fill] (m) at (0,-0.4) {};
\node[draw,circle,inner sep=1.5pt,fill] (n) at (0,0.3) {};
\node[draw,circle,inner sep=1.5pt,fill] (o) at (0,1) {};
% Order
\draw[order] (m)--(n)--(o);
% Labels
\node[label,anchor=west] at (o) {$(X_{\mathbf{S}_3})^2$};
\node[label,anchor=west] at (n) {$\leqslant_{X_{\mathbf{S}_3}}$};
\node[label,anchor=west] at (m) {$\varnothing$};
\node[label,anchor=north,yshift=-7pt] at (m) {$\mathbf{S}_3$};
\end{scope}

\begin{scope}[xshift=-3.7cm,yshift=-2cm]
% Elements
\node[draw,circle,inner sep=1.5pt,fill] (0) at (0,0.3) {};
\node[draw,circle,inner sep=1.5pt,fill] (1) at (0,1) {};
% Order
\draw[order] (0)--(1);
% Labels
\node[label,anchor=west] at (1) {$(X_{\mathbf{S}_2})^2$};
\node[label,anchor=west] at (0) {$\varnothing$};
\node[label,anchor=north,yshift=-7pt] at (0) {$\mathbf{S}_2$};
\end{scope}

\begin{scope}[xshift=-1.6cm,yshift=-2cm]
% Elements
\node[draw,circle,inner sep=1.5pt,fill] (p) at (0,-1.1) {};
\node[draw,circle,inner sep=1.5pt,fill] (q) at (0,-0.4) {};
\node[draw,circle,inner sep=1.5pt,fill] (r) at (0,0.3) {};
\node[draw,circle,inner sep=1.5pt,fill] (s) at (0,1) {};
% Order
\draw[order] (p)--(q)--(r)--(s);
% Labels
\node[label,anchor=west] at (s) {$\left\{u,x^{[u]},y^{[u]},x_{[u]},y_{[u]}\right\}^2$};
\node[label,anchor=west] at (r) {$\leqslant_{X_{\mathbf{S}_{3}[\mathbf{S}_2]}}$};
\node[label,anchor=west] at (q) {$\leqslant_{X_{\mathbf{S}_{3}
[\mathbf{S}_2]}}{\setminus}\{(u,u)\}$};
\node[label,anchor=west] at (p) {$\varnothing$};
\node[label,anchor=north,yshift=-7pt] at (p) {$\mathbf{S}_3[\mathbf{S}_2]\approx\mathbf{S}_4$};
\end{scope}

\end{tikzpicture}
%acdiag: make chains smaller
\caption{The posets used to represent $\mathbf{S}_3,
\mathbf{S}_2$ and $\mathbf{S}_3[\mathbf{S}_2]\approx\mathbf{S}_4$,
respectively, and their respective representations.}
\label{Fig:S3-S2}
\end{figure}

\begin{example}\label{ex:fin-rep-S4} Let $\mathbf{K}=\mathbf{S}_3$ and $\mathbf{L}=\mathbf{S}_2$. Then 
$\mathbf{K}[\mathbf{L}]\approx \mathbf{S}_4$ {\upshape(}by Proposition~\ref{prop:K=Sn+L=Sm}{\upshape)} and $\mathbf{K}[\mathbf{L}]$ is representable {\upshape(}by Theorem~\ref{Thm:K[L]represenatble}{\upshape)}. Recall that $\mathbf{S}_2$ is representable over a one-element poset and $\mathbf{S}_3$ over a two-element discrete poset with an order automorphism interchanging the elements of the poset. If $X_\K = \{x,y\}$ and $X_{\Lalg} = \{u\}$, then $\mathbf{S}_4$ is representable over $\mathbf{X}_{\koflbf} = (X_{\koflbf},\leqslant_{X_{\koflbf}})$ with $X_{\K[\Lalg]} =\{u, x^{[u]},y^{[u]}, x_{[u]}, y_{[u]}\}$, $$\leqslant_{X_{\K[\Lalg]}} = \textnormal{id}_{X_{\K[\Lalg]}} \cup \left[\{x_{[u]}, y_{[u]}\}\times \{u\}\right] \cup \left[\{u\}\times \{x^{[u]}, y^{[u]}\}\right]\cup \left[\{x_{[u]}, y_{[u]}\}\times \{x^{[u]}, y^{[u]}\}\right]$$ 
and $E_{\K[\Lalg]}= (X_{\K[\Lalg]})^2$. Here $\alpha_{\K[\Lalg]} = \{(u, u), (x_{[u]}, y_{[u]}), (y_{[u]}, x_{[u]}), (x^{[u]}, y^{[u]}), (y^{[u]}, x^{[u]})\}$ and $\beta_{\K[\Lalg]} = \{(u, u), (x_{[u]}, x^{[u]}), (x^{[u]}, x_{[u]}), (y_{[u]}, y^{[u]}), (y^{[u]},y_{[u]})\}$. The posets $\mathbf{X}_\K, \mathbf{X}_\Lalg$ and $\mathbf{X}_{\K[\Lalg]}$ are depicted in Figure~\ref{Fig:S3-S2}. It also shows the representations of $\mathbf{S}_3$, $\mathbf{S}_2$ and $\mathbf{S}_4$. 
\\
\end{example}

\begin{example}\label{ex:fin-rep-S5}
Let $\mathbf{K}=\mathbf{S}_3$ and $\mathbf{L}=\mathbf{S}_3$. Then 
$\mathbf{K}[\mathbf{L}]\approx \mathbf{S}_5$ {\upshape(}by Proposition~\ref{prop:K=Sn+L=Sm}{\upshape)} and $\mathbf{K}[\mathbf{L}]$ is representable {\upshape(}by Theorem~\ref{Thm:K[L]represenatble}{\upshape)}.  If $X_{\K}= \{x, y\}$ and $X_{\Lalg}= \{u, v\}$, then $\mathbf{S}_5$ is representable over $\mathbf{X}_{\koflbf} = (X_{\koflbf},\leqslant_{X_{\koflbf}})$ with $X_{\K[\Lalg]} = \{u, v, x^{[u]},y^{[u]}, x_{[u]}, y_{[u]}\}$,  
$$\leqslant_{X_{\K[\Lalg]}} = \textnormal{id}_{X_{\K[\Lalg]}} \cup \left[\{x_{[u]}, y_{[u]}\}\times \{u, v\}\right] \cup \left[\{u, v\}\times \{x^{[u]}, y^{[u]}\}\right] \cup  \left[\{x_{[u]}, y_{[u]}\}\times \{x^{[u]}, y^{[u]}\}\right],$$
$E_{\K[\Lalg]}= (X_{\K[\Lalg]})^2$, 
$\alpha_{\K[\Lalg]} = \{(u, v), (v, u), (x_{[u]}, y_{[u]}), (y_{[u]}, x_{[u]}), (x^{[u]}, y^{[u]}), (y^{[u]}, x^{[u]})\}$ and $\beta_{\K[\Lalg]} = \{(u, u), (v, v),  (x_{[u]}, x^{[u]}), (x^{[u]}, x_{[u]}), (y_{[u]}, y^{[u]}), (y^{[u]},y_{[u]})\}$. Figure~\ref{Fig:S3[S3]} shows the posets $\mathbf{X}_\K, \mathbf{X}_\Lalg$ and $\mathbf{X}_{\K[\Lalg]}$, and the representation of $\mathbf{S}_5$. \\
\end{example}

In Figure~\ref{fig:posets-for-S2-S7} we show how to extend the posets from Examples~\ref{ex:fin-rep-S4} and~\ref{ex:fin-rep-S5}
to give finite posets which can be used to represent $\mathbf{S}_6$ and $\mathbf{S}_7$.

\begin{figure}
\centering    
    \begin{tikzpicture}[scale=1.1,pics/sample/.style={code={\draw[#1] (0,0) --(0.6,0) ;}},
    Dotted/.style={% https://tex.stackexchange.com/a/52856/194703
    dash pattern=on 0.1\pgflinewidth off #1\pgflinewidth,line cap=round,
    shorten >=#1\pgflinewidth/2,shorten <=#1\pgflinewidth/2},
    Dotted/.default=3]

%Poset used for S5
\begin{scope}[xshift=-2cm,box/.style = {draw,dashdotdotted,inner sep=25pt,rounded corners=5pt,thick}]
% Elements
\node[draw,circle,inner sep=1.5pt] (y) at (-1,1.5) {};
\node[draw,circle,inner sep=1.5pt] (z) at (1,1.5) {};
\node[draw,circle,inner sep=1.5pt] (w) at (-1,0) {};
\node[draw,circle,inner sep=1.5pt] (x) at (1,0) {};
\node[draw,circle,inner sep=1.5pt] (u) at (-1,-1.5) {};
\node[draw,circle,inner sep=1.5pt] (v) at (1,-1.5) {};
% Order
\draw[order] (u)--(w)--(y);
\draw[order] (v)--(x)--(z);
\draw[order] (u)--(x);
\draw[order] (u)--(w);
\draw[order] (v)--(x);
\draw[order] (v)--(w);
\draw[order] (w)--(z);
\draw[order] (w)--(y);
\draw[order] (x)--(y);
\draw[order] (x)--(z);
%Labels
\node[label,anchor=east] at (w) {$u$};
\node[label,anchor=west] at (x) {$v$};
\node[label,anchor=east] at (y) {$x^{[u]}$};
\node[label,anchor=west] at (z) {$y^{[u]}$};
\node[label,anchor=east] at (u) {$x_{[u]}$};
\node[label,anchor=west] at (v) {$y_{[u]}$};
%new alpha
\path (y) edge [<->, bend left=23, dashed, thick] node {} (z);
\path (z) edge [->, bend left=23, dashed, thick] node {} (y);
\path (u) edge [->, bend left=23, dashed, thick] node {} (v);
\path (v) edge [->, bend left=23, dashed, thick] node {} (u);
\draw [->,dashed, thick] (w) edge[loop,looseness=20]node{} (w);
\draw [->,dashed, thick] (x) edge[loop,looseness=20]node{} (x);
%new beta
\path (y) edge [->, bend left=40, dotted, thick] node {} (u);
\path (u) edge [->, bend left=40, dotted, thick] node {} (y);
\path (z) edge [->, bend left=40, dotted, thick] node {} (v);
\path (v) edge [->, bend left=40, dotted, thick] node {} (z);
\draw [->,dotted, thick] (w) edge[loop right,looseness=40]node{} (w);
\draw [->,dotted, thick] (x) edge[loop left,looseness=40]node{} (x);
%Equivalence class
\node[box,fit=(x)(y)(z)(w)(v)(u)] {};
\path (4.5,0.9) 
 node[matrix,anchor=north east,draw,nodes={anchor=center},inner sep=2pt, thick]  {
 \pic{sample=solid}; & \node{$\leqslant$}; \\
  \pic{sample=dashed}; & \node{$\alpha$}; \\
  \pic{sample=dotted}; & \node{$\beta$}; \\
  \pic{sample=dashdotdotted}; & \node{$E$ blocks}; \\
 };

\node[label,anchor=north,xshift=40pt,yshift=-28pt] at (u) {$\mathbf{X}_{\mathbf{S}_3[\mathbf{S}_3]}$};
\end{scope}

%representation of S5
\begin{scope}[xshift=3.8cm]
% Elements
\node[draw,circle,inner sep=1.5pt,fill] (h) at (0,-2) {};
\node[draw,circle,inner sep=1.5pt,fill] (i) at (0,-1) {};
\node[draw,circle,inner sep=1.5pt,fill] (j) at (0,0) {};
\node[draw,circle,inner sep=1.5pt,fill] (k) at (0,1) {};
\node[draw,circle,inner sep=1.5pt,fill] (l) at (0,2) {};
% Order
\draw[order] (h)--(i)--(j)--(k)--(l);
% Labels
\node[label,anchor=west] at (h) {$\varnothing$};
\node[label,anchor=west] at (i) {$\leqslant_{X_{\mathbf{S}_3[\mathbf{S}_3]}}{\setminus}\,\{(u,u),(v,v)\}$};
\node[label,anchor=west] at (j) {$\leqslant_{X_{\mathbf{S}_3[\mathbf{S}_3]}}$};
\node[label,anchor=west] at (k) {$\leqslant_{X_{\mathbf{S}_3[\mathbf{S}_3]}}\cup\,\{(u,v),(v,u)\}$};
\node[label,anchor=west] at (l) {$(X_{\mathbf{S}_3[\mathbf{S}_3]})^2=
E_{\mathbf{S}_3[\mathbf{S}_3]}$};
\node[label,anchor=north,yshift=-10pt] at (h) {$\mathbf{S}_3[\mathbf{S}_3]\approx\mathbf{S}_5$};
\end{scope}

\end{tikzpicture}
\caption{The poset used to represent $\mathbf{S}_3[\mathbf{S}_3]\approx\mathbf{S}_5$
and its representation.}
\label{Fig:S3[S3]}
\end{figure}

\begin{figure}
\centering

%acdiag: reduced scale to 0.8
%ac reduced further to 0.7 
%must fix space between 
\begin{tikzpicture}[scale=0.7]
%S4
\begin{scope}
% Elements
\node[draw,circle,inner sep=1.5pt] (y) at (-1,1) {};
\node[draw,circle,inner sep=1.5pt] (z) at (1,1) {};
\node[draw,circle,inner sep=1.5pt] (x) at (0,0) {};
\node[draw,circle,inner sep=1.5pt] (v) at (-1,-1) {};
\node[draw,circle,inner sep=1.5pt] (w) at (1,-1) {};
% Order
\draw[order] (y)--(x)--(v);
\draw[order] (z)--(x)--(w);
	
\end{scope}

%S5
\begin{scope}[xshift=4cm]
% Elements
\node[draw,circle,inner sep=1.5pt] (y) at (-1,1.5) {};
\node[draw,circle,inner sep=1.5pt] (z) at (1,1.5) {};
\node[draw,circle,inner sep=1.5pt] (w) at (-1,0) {};
\node[draw,circle,inner sep=1.5pt] (x) at (1,0) {};
\node[draw,circle,inner sep=1.5pt] (u) at (-1,-1.5) {};
\node[draw,circle,inner sep=1.5pt] (v) at (1,-1.5) {};
% Order
\draw[order] (u)--(w)--(y);
\draw[order] (v)--(x)--(z);
\draw[order] (u)--(x);
\draw[order] (u)--(w);
\draw[order] (v)--(x);
\draw[order] (v)--(w);
\draw[order] (w)--(z);
\draw[order] (w)--(y);
\draw[order] (x)--(y);
\draw[order] (x)--(z);

\end{scope}

%S6
\begin{scope}[xshift=8cm]
% Elements
\node[draw,circle,inner sep=1.5pt] (r) at (-1,-2.5) {};
\node[draw,circle,inner sep=1.5pt] (s) at (1,-2.5) {};
\node[draw,circle,inner sep=1.5pt] (t) at (-1,-1) {};
\node[draw,circle,inner sep=1.5pt] (u) at (1,-1) {};
\node[draw,circle,inner sep=1.5pt] (v) at (0,0) {};
\node[draw,circle,inner sep=1.5pt] (w) at (1,1) {};
\node[draw,circle,inner sep=1.5pt] (x) at (-1,1) {};
\node[draw,circle,inner sep=1.5pt] (y) at (1,2.5) {};
\node[draw,circle,inner sep=1.5pt] (z) at (-1,2.5) {};
% Order
\draw[order] (r)--(t)--(v);
\draw[order] (s)--(u)--(v);
\draw[order] (v)--(w)--(y);
\draw[order] (v)--(x)--(z);
\draw[order] (r)--(u);
\draw[order] (s)--(t);
\draw[order] (w)--(z);
\draw[order] (x)--(y);

\end{scope}

%S7
\begin{scope}[xshift=12cm]
% Elements
\node[draw,circle,inner sep=1.5pt] (q) at (-1,3) {};
\node[draw,circle,inner sep=1.5pt] (r) at (1,3) {};
\node[draw,circle,inner sep=1.5pt] (y) at (-1,1.5) {};
\node[draw,circle,inner sep=1.5pt] (z) at (1,1.5) {};
\node[draw,circle,inner sep=1.5pt] (w) at (-1,0) {};
\node[draw,circle,inner sep=1.5pt] (x) at (1,0) {};
\node[draw,circle,inner sep=1.5pt] (u) at (-1,-1.5) {};
\node[draw,circle,inner sep=1.5pt] (v) at (1,-1.5) {};
\node[draw,circle,inner sep=1.5pt] (s) at (-1,-3) {};
\node[draw,circle,inner sep=1.5pt] (t) at (1,-3) {};
% Order
\draw[order] (s)--(u)--(w)--(y)--(q);
\draw[order] (t)--(v)--(x)--(z)--(r);
\draw[order] (s)--(v);
\draw[order] (t)--(u);
\draw[order] (y)--(r);
\draw[order] (z)--(q);
\draw[order] (u)--(x);
\draw[order] (u)--(w);
\draw[order] (v)--(x);
\draw[order] (v)--(w);
\draw[order] (w)--(z);
\draw[order] (w)--(y);
\draw[order] (x)--(y);
\draw[order] (x)--(z);
	
\end{scope}
	
\end{tikzpicture}

\caption{Figure showing the posets used to represent $\mathbf{S}_4$ up to $\mathbf{S}_7$.}\label{fig:posets-for-S2-S7}
\end{figure}

Lastly, using Proposition~\ref{prop:K=Sn+L=Sm}
we get the isomorphic relationship 
$\mathbf{S}_n[\Lalg]\approx \mathbf{S}_3\left[ \mathbf{S}_{n-2}[\Lalg] \right]$. This isomorphism and Theorem~\ref{thm:fin-rep-Sn}  justify the corollary below. \\

\begin{corollary}\label{cor:Sn[L]-rep}
Let $\mathbf{K}=\mathbf{S}_n$ for $n \geqslant 3$ and $n$ odd. If\, $\mathbf{L}$ is a {\upshape(}finitely{\upshape)} representable DqRA, then   $\mathbf{S}_n[\mathbf{L}]$ is {\upshape(}finitely{\upshape)} representable. \\
\end{corollary}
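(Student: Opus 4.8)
The plan is to prove the statement by induction on the odd number $n \geqslant 3$, using the isomorphism $\mathbf{S}_n[\Lalg]\approx \mathbf{S}_3[\mathbf{S}_{n-2}[\Lalg]]$ recorded above to strip off one copy of $\mathbf{S}_3$ at each stage and then invoke Theorem~\ref{Thm:K[L]represenatble}. The base case $n=3$ is exactly Theorem~\ref{Thm:K[L]represenatble}: if $\Lalg$ is (finitely) representable then so is $\mathbf{S}_3[\Lalg]$.

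For the inductive step I would suppose $n\geqslant 5$ is odd and that the result holds for $n-2$ (which is again odd and at least $3$). Applying the inductive hypothesis to $\Lalg$, the algebra $\mathbf{S}_{n-2}[\Lalg]$ is (finitely) representable. It is moreover a DqRA: since $\mathbf{S}_{n-2}$ is conic with totally irreducible monoid identity, Theorem~\ref{thm:ConicKDqRA} guarantees that $\mathbf{S}_{n-2}[\Lalg]$ is a DqRA, so it is a legitimate input to Theorem~\ref{Thm:K[L]represenatble}. That theorem then yields that $\mathbf{S}_3[\mathbf{S}_{n-2}[\Lalg]]$ is (finitely) representable, and since representability is preserved under isomorphism, the isomorphism $\mathbf{S}_n[\Lalg]\approx \mathbf{S}_3[\mathbf{S}_{n-2}[\Lalg]]$ transfers this to $\mathbf{S}_n[\Lalg]$, completing the induction. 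Throughout, the parenthetical ``finitely'' propagates verbatim, because each application of Theorem~\ref{Thm:K[L]represenatble} preserves finiteness of the representing poset; this is simply the general (arbitrary base $\Lalg$) version of the inductive pattern already used for Theorem~\ref{thm:fin-rep-Sn}.

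The one ingredient that requires care is the isomorphism $\mathbf{S}_n[\Lalg]\approx \mathbf{S}_3[\mathbf{S}_{n-2}[\Lalg]]$, and this is where I expect the main work to lie. Proposition~\ref{prop:K=Sn+L=Sm} supplies $\mathbf{S}_3[\mathbf{S}_{n-2}]\cong \mathbf{S}_n$, but only with a Sugihara chain in the second coordinate; to reach an arbitrary $\Lalg$ one needs the nested sum to be associative, i.e. $(\mathbf{S}_3[\mathbf{S}_{n-2}])[\Lalg]\cong \mathbf{S}_3[\mathbf{S}_{n-2}[\Lalg]]$, together with the fact that the construction respects isomorphisms in its first argument. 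I would establish associativity directly from the definitions in Section~\ref{sec:KL-construction}: the underlying set of either side is $(S_3\setminus\{1\})\cup(S_{n-2}\setminus\{1\})\cup L$ with monoid identity $1_\Lalg$, and a routine case analysis on membership in these three blocks shows that $\cdot$, $\wedge$, $\vee$, the residuals and the unary operations agree on the two sides. Here the total irreducibility of $1_{\mathbf{S}_3}$ and of $1_{\mathbf{S}_{n-2}}$ (equivalently, of $1_{\mathbf{S}_n}$) is exactly what keeps both iterated nested sums well defined. With associativity in hand, combining it with Proposition~\ref{prop:K=Sn+L=Sm} gives $\mathbf{S}_n[\Lalg]\cong(\mathbf{S}_3[\mathbf{S}_{n-2}])[\Lalg]\cong \mathbf{S}_3[\mathbf{S}_{n-2}[\Lalg]]$, which is precisely the isomorphism driving the induction.
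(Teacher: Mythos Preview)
Your proposal is correct and follows essentially the same route as the paper: the paper also records the isomorphism $\mathbf{S}_n[\Lalg]\approx \mathbf{S}_3[\mathbf{S}_{n-2}[\Lalg]]$ (attributing it to Proposition~\ref{prop:K=Sn+L=Sm}) and then appeals to the same inductive pattern underlying Theorem~\ref{thm:fin-rep-Sn}, applying Theorem~\ref{Thm:K[L]represenatble} at each stage. You are more explicit than the paper about the associativity of the nested sum needed to derive that isomorphism, which is a genuine detail the paper leaves implicit.
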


%%%%%%%%%%%%%%%%%%%%%%%%%%%%%%%%%%%%%%%%%%%%%%%%%%%%%%%%%%%%%%%%%%%%
\bmhead{Acknowledgements}
%%%%%%%%%%%%%%%%%%%%%%%%%%%%%%%%%%%%%%%%%%%%%%%%%%%%%%%%%%%%%%%%%%%
The authors are grateful to Peter Jipsen for drawing our attention to the $\K[\Lalg]$ construction for residuated lattices, and to Wesley Fussner for pointing out the recent use of the term `nested sum'.   
%ac0112
We also thank the referee for their helpful feedback which has improved the paper. 

\section*{Declarations}

\bmhead{Ethical approval} Not applicable. 

\bmhead{Funding} The first author acknowledges support from the National Research Foundation (NRF) of South Africa (grant 127266).

\bmhead{Availability of data and materials} Not applicable.

%%%%%%%%%%%%%%%%%%%%%%%%%%%%%%%%%%%%

%\newpage 


\begin{thebibliography}{8}
%%%%%%%%%%%%%%%%%%%%%%%%%%%%%%%%%%%%

\bibitem{Com83}
Comer, S.D.: 
Extension of polygroups by polygroups and their representations using color schemes. 
In: Universal Algebra and Lattice Theory, LNM 1004, Freese et al. (Eds), 91--103 (1983)


\bibitem{CJR24}
Craig, A., Jipsen, P., Robinson, C.: 
Frames and spaces for distributive quasi relation algebras and distributive involutive FL-algebras,
In: RAMiCS 2024, LNCS 14787, Fahrenburg et al. (Eds.), 156--174 (2024) 

\bibitem{CR-RDqRA}
Craig, A., Robinson, C.: 
Representable distributive quasi relation algebras, 
%ac2611 
Algebra Universalis \textbf{86}:12 (2025)
%Algebra Universalis, accepted. Available at: \url{https://arxiv.org/abs/2310.11719}

\bibitem{CR-Sugihara} 
Craig, A., Robinson, C.: 
Representing Sugihara monoids  via weakening relations, accepted. Available at: \url{https://arxiv.org/abs/2310.12935}

\bibitem{G04} Galatos, N.: Minimal varieties of residuated lattices, 
Algebra Universalis \textbf{52}, 215--239 (2004)


%\bibitem{GJ12-plog}
%Galatos, N., Jipsen, P.: Periodic lattice-ordered pregroups are distributive, Algebra Universalis \textbf{68}, 145--150 (2012)

\bibitem{GJ13} Galatos, N., Jipsen, P.: Relation algebras as expanded FL-algebras, 
Algebra Universalis \textbf{69}, 1--21 (2013)

\bibitem{GJ20-AU} Galatos, N., Jipsen, P.: The structure of generalized BI-algebras and weakening relation algebras, Algebra Universalis \textbf{81:35} (2020)

\bibitem{GJ20-ramics}
Galatos, N., Jipsen, P.: Weakening relation algebras and FL$^2$-algebras, In: RAMiCS 2020, LNCS 12062, U.~Fahrenberg et al. (Eds.), 117--133 (2020)

\bibitem{GJKO}
Galatos, N., Jipsen, P., Kowalski, T., Ono, H.: 
\emph{Residuated Lattices: An Algebraic Glimpse at Substructural Logics}, Elsevier (2007)

%\bibitem{HH01}
%Hirsch, R., Hodkinson, I.: Representability is not decidable for finite relation algebras, \emph{Trans. Amer. Math. Soc.} \textbf{353}, 1403--1425 (2001)

\bibitem{HR09} 
Hsieh, A., Raftery, J.G.: Semiconic idempotent residuated strucutres, Algebra Universalis, \textbf{61}, 413--430 (2009)

%ac not used 
%\bibitem{JipMad97} Jipsen, P., Maddux, R.: Nonrepresentable sequential algebras, L. J. of the IGPL, \textbf{5}, 565--574 (1997)

\bibitem{JS23}
Jipsen, P., \v{S}emrl, J.: Representable and diagonally representable weakening relation algebras, In: RAMiCS 2023, LNCS 13896, Gl\"{u}ck et al. (Eds.), 140--157 (2023)

\bibitem{JT48}
J\'{o}nsson, B., Tarski, A.: Representation problems for relation algebras, \emph{Bull. AMS} \textbf{54}, 89t (1948)

\bibitem{Lyn50}
Lyndon, R.C.: The representation of relation algebras, \emph{Ann. Math.} \textbf{51}, 707--729 (1950) 

%ac not used 
%\bibitem{Mad78} Maddux, R.: Some sufficient conditions for the representability of relation algebras, 
%Algebra Universalis \textbf{8}, 162--172 (1978)


%ac not used
%\bibitem{Mad82} Maddux, R.: Some varieties containing relation algebras,  Trans. Amer. Math. Soc. \textbf{272}, 501--526 (1982)

%ac not used 
%\bibitem{Mad83} Maddux, R.: A sequent calculus for relation algebras, Ann. Pure Appl. Logic \textbf{25}, 73--101 (1983)

%ac not used
%\bibitem{Mad94} Maddux, R.D.: A perspective on the theory of relation algebras, Algebra Universalis \text{31}, 456--465 (1994) 


\bibitem{Mad06}
Maddux, R.D.: \emph{Relation algebras}, Studies in Logic and the Foundations of Mathematics, Elsevier (2006)

\bibitem{Mad2010} 
Maddux, R.D.: Relevance logic and the calculus of relations, Rev. Symb. Log. \textbf{3}, 41--70 (2010)

\bibitem{P9M4} 
McCune, W.: Prover9 and Mace4, \url{http://www.cs.unm.edu/~mccune/Prover9}, 2005--2010. 


%\bibitem{Monk64}
%Monk, J.D.: On representable relation algebras, \emph{Michigan Math. J.} \textbf{11}, 207--210 (1964) 

%ac not used 
%\bibitem{MRW2019}
%Moraschini, T., Raftery, J.G., Wannenburg, J.J.:
%Varieties of De Morgan monoids: Minimality and irreducible algebras,
%\emph{J. Pure Appl. Algebra} \textbf{223}, 2780--2803 (2019)


\bibitem{Sant24}
Santschi, S.: Semilinear idempotent distributive $\ell$-monoids,  J. Pure Appl. Algebra
\textbf{228}, 107627 (2024) 

\bibitem{Tar41}
Tarski, A.: On the calculus of binary relations, J. Symb. Logic \textbf{6}, 73--89 (1941) 

%ac not used 
%\bibitem{Tar55}
%Tarski, A.: Contributions to the theory of models, III, Nederl. Akad. Wetensch. Proc. Ser. A. \textbf{58}, 56--64 (1955)
\end{thebibliography}
\end{document}